\documentclass[12pt]{amsart}
\usepackage{ amsmath, amsthm, amsfonts, amssymb, color}
 \usepackage{mathrsfs}
\usepackage{amsfonts, amsmath}
 \usepackage{amsmath,amstext,amsthm,amssymb,amsxtra}
 \usepackage{txfonts} %also pxfonts
 \usepackage[colorlinks, citecolor=blue,pagebackref,hypertexnames=false]{hyperref}
 \allowdisplaybreaks
 \usepackage{pgf,tikz}
 \textwidth =160mm
 \textheight =225mm
\oddsidemargin=-0.0cm
\evensidemargin=0.0cm
\headheight=13pt
\headsep=0.8cm
\parskip=0pt
\hfuzz=6pt
\widowpenalty=10000
 \setlength{\topmargin}{-0.6cm}

\begin{document}
\baselineskip 16pt

\newcommand\C{{\mathbb C}}
\newtheorem{theorem}{Theorem}[section]
\newtheorem{proposition}[theorem]{Proposition}
\newtheorem{lemma}[theorem]{Lemma}
\newtheorem{corollary}[theorem]{Corollary}
\newtheorem{remark}[theorem]{Remark}
\newtheorem{example}[theorem]{Example}
\newtheorem{question}[theorem]{Question}
\newtheorem{exercise}[theorem]{Exercise}
\newtheorem{definition}[theorem]{Definition}
\newtheorem{conjecture}[theorem]{Conjecture}
\newcommand\RR{\mathbb{R}}
\newcommand{\la}{\lambda}
\def\RN {\mathbb{R}^n}
\newcommand{\norm}[1]{\left\Vert#1\right\Vert}
\newcommand{\abs}[1]{\left\vert#1\right\vert}
\newcommand{\set}[1]{\left\{#1\right\}}
\newcommand{\Real}{\mathbb{R}}
\newcommand{\supp}{\operatorname{supp}}
\newcommand{\card}{\operatorname{card}}
\renewcommand{\L}{\mathcal{L}}
\renewcommand{\P}{\mathcal{P}}
\newcommand{\T}{\mathcal{T}}
\newcommand{\A}{\mathbb{A}}
\newcommand{\K}{\mathcal{K}}
\renewcommand{\S}{\mathcal{S}}
\newcommand{\blue}[1]{\textcolor{blue}{#1}}
\newcommand{\red}[1]{\textcolor{red}{#1}}
\newcommand{\Id}{\operatorname{I}}
\newcommand\wrt{\,{\rm d}}
\def\SH{\sqrt {H}}

\newcommand{\rn}{\mathbb R^n}
\newcommand{\de}{\delta}
\newcommand{\tf}{\tfrac}
\newcommand{\ep}{\epsilon}
\newcommand{\vp}{\varphi}

\newcommand{\mar}[1]{{\marginpar{\sffamily{\scriptsize
        #1}}}}

\newcommand\CC{\mathbb{C}}
\newcommand\dd {\mathrm{d}}
\newcommand\NN{\mathbb{N}}
\newcommand\ZZ{\mathbb{Z}}
\renewcommand\Re{\operatorname{Re}}
\renewcommand\Im{\operatorname{Im}}
\newcommand{\mc}{\mathcal}
\newcommand\D{\mathcal{D}}
\newcommand{\al}{\alpha}
\newcommand{\nf}{\infty}
\newcommand{\comment}[1]{\vskip.3cm
	\fbox{%
		\color{red}
		\parbox{0.93\linewidth}{\footnotesize #1}}
	\vskip.3cm}

\newcommand{\disappear}[1]

\numberwithin{equation}{section}
\newcommand{\chg}[1]{{\color{red}{#1}}}
\newcommand{\note}[1]{{\color{green}{#1}}}
\newcommand{\later}[1]{{\color{blue}{#1}}}
\newcommand{\bchi}{ {\chi}}

\numberwithin{equation}{section}
\newcommand\relphantom[1]{\mathrel{\phantom{#1}}}
\newcommand\ve{\varepsilon}  \newcommand\tve{t_{\varepsilon}}
\newcommand\vf{\varphi}      \newcommand\yvf{y_{\varphi}}
\newcommand\bfE{\mathbf{E}}
\newcommand{\ale}{\text{a.e. }}

 \newcommand{\mm}{\mathbf m}
\newcommand{\Be}{\begin{equation}}
\newcommand{\Ee}{\end{equation}}

 \textwidth =162mm
\textheight =228mm
\oddsidemargin=-0.0cm
\evensidemargin=0.0cm
\headheight=13pt
\headsep=0.8cm
\parskip=0pt
\hfuzz=6pt
\widowpenalty=10000
\setlength{\topmargin}{-0.6cm}

\title[     Maximal functions generated by   H\"ormander-type   spectral multipliers]
{On maximal functions generated by   H\"ormander-type \\  spectral multipliers  }
\author[Peng Chen, Xixi Lin, Liangchuan Wu and Lixin Yan]{Peng Chen, Xixi Lin, Liangchuan Wu and Lixin Yan}

  \address{Peng Chen, Department of Mathematics, Sun Yat-sen
 	University, Guangzhou, 510275, P.R. China}
 \email{chenpeng3@mail.sysu.edu.cn}

   \address{Xixi Lin, Department of Mathematics, Sun Yat-sen   University,
 	Guangzhou, 510275, P.R. China}
 \email{linxx85@mail.sysu.edu.cn}

   \address{Liangchuan Wu, School of Mathematical Science, Anhui University,
   Hefei, 230601, P.R. China}
 \email{wuliangchuan@ahu.edu.cn}

 \address{Lixin Yan, Department of Mathematics, Sun Yat-sen   University,
 	Guangzhou, 510275, P.R. China}
 \email{mcsylx@mail.sysu.edu.cn}

 \date{\today}
\subjclass[2000]{42B15, 42B25, 47F10.}
\keywords{ Maximal functions, H\"ormander-type spectral  multipliers, Doob transform, harmonic weight, nonnegative self-adjoint operator,  Gaussian bound}

\begin{abstract}

Let $(X,d,\mu)$ be a metric space with doubling measure and $L$ be a nonnegative self-adjoint operator on $L^2(X)$ whose heat kernel satisfies the Gaussian upper bound. We assume that there exists an $L$-harmonic function $h$ such that the semigroup $\exp(-tL)$,   after applying the Doob transform related to $h$, satisfies the upper and lower Gaussian estimates. In this paper
 we apply the Doob
 transform and some techniques as in  Grafakos-Honz\'ik-Seeger \cite{GHS2006}  to obtain
   an optimal  $\sqrt{\log(1+N)}$ bound in $L^p$ for the maximal function $\sup_{1\leq i\leq N}|m_i(L)f|$  for   multipliers $m_i,1\leq i\leq N,$ with   uniform estimates.  Based on this, we establish sufficient conditions on the bounded Borel function $m$ such that the maximal function %$ M_{m,L}$
  $M_{m,L}f(x) = \sup_{t>0} |m(tL)f(x)|$
  is bounded on $L^p(X)$.

  The applications include  Schr\"odinger operators with inverse square potential, Scattering operators,     Bessel operators and Laplace-Beltrami operators.
\end{abstract}

\maketitle
\section{Introduction}\label{sec:1}
\setcounter{equation}{0}

\subsection{Background}

Given a symbol $m$ satisfying
\begin{eqnarray}\label{e1.1}
|\partial^{\alpha} m(\xi)|\leq C_{\alpha} |\xi|^{-\alpha}
\end{eqnarray}
for all multiindices $\alpha$, then by classical Calder\'on-Zygmund theory the Fourier multiplier  $f\to {\mathcal F}^{-1}[m {\hat f}]$ defines an $L^p$ bounded operator (see for example, \cite{H1960,St1970}).

The  maximal function generated by   dilations of a single multiplier is defined by
\begin{eqnarray*}
%{\mathcal M}^{\rm dyad}_mf(x)=\sup_{k\in {\mathbb Z}} |{\mathcal F}^{-1}[m(2^k\cdot) {\hat f}]|
%\ \ \ \ {\rm and }\ \ \  \
	{\mathcal M}_mf(x)=\sup_{t>0}  \big|{\mathcal F}^{-1}\big[m(t\cdot) {\hat f}\big](x)\big|.
\end{eqnarray*}
In 2005, Christ,   Grafakos,  Honz\'ik and Seeger \cite{CGHS2005} constructed   a function $m$ satisfying \eqref{e1.1}, but  operator
%both operators ${\mathcal M}^{\rm dyad}_m$ and
${\mathcal M}_m$  is  unbounded  on $L^p({\mathbb R^n})$ for any  $1<p<\infty$.
  The counterexample shows that in general additional conditions on $m$ are needed for the maximal inequality to hold.  For example, it is seen from the results  of Dappa and Trebels \cite{DT1985}    that by  using Calder\'on-Zygmund theory,      ${\mathcal M}_m$
  is   bounded  on $L^p({\mathbb R^n})$  whenever
\begin{eqnarray}\label{e1.02}
 \sum_{k=1}^{\infty}  \|\phi m(2^k \cdot)\|^2_{W_{{n\over p}+\epsilon }^p}<\infty
\end{eqnarray}
 for $1<p\leq 2$; and
 \begin{eqnarray}\label{e1.03}
 	\sum_{k=1}^{\infty}  \|\phi m(2^k \cdot)\|^2_{W_{{n\over 2}+\epsilon }^2 }<\infty
 \end{eqnarray}
for $  2\leq p<\infty$. Here $\|F\|_{W^p_s}=\|\big(I-\Delta\big)^{s/2} F\|_{L^p(\RR^n)}$ and
$\phi \in C_c^\infty(\RR^n)$ is a non-trivial auxiliary function.
In particular,  ${\mathcal M}_m$
is   bounded  on $L^p({\mathbb R^n})$  when $\|\phi m(2^k \cdot)\|_{W^1_{{n }+\epsilon} ({\mathbb R^n})} = O(|k|^{-\alpha})
$ for some $\alpha>1/2$, which was  further weakened to the condition
$\|\phi m(2^k \cdot)\|_{W^1_{{n }+\epsilon } ({\mathbb R^n})} \leq C\big( 1+\log|k|\big)^{-(1+\epsilon')/2}
$ for some $\epsilon, \epsilon'>0$ due to Christ-Grafakos-Honz\'ik-Seeger   \cite[Theorem 1.2]{CGHS2005}.
In 2006,
    Grafakos, Honz\'ik and Seeger \cite{GHS2006} improved  the results in \cite[Theorem 1.2]{CGHS2005}
    to show the following:
    \begin{proposition}\label{prop00}
    	Suppose $1<p<\infty, q=\min\{p, 2\}$.    Suppose that
    	\begin{eqnarray*}
    		\|\phi m(2^k\cdot)\|_{W^{ q}_\alpha}\leq \omega(k), \ \ \  k\in{\mathbb Z}
    	\end{eqnarray*}
    	holds for some $\alpha>n/p + 1/p'$ if $1<p\leq 2$ or for some
    	$\alpha>n/2 + 1/p $ if $p> 2$,   and the nonincreasing rearrangement $\omega^{\ast}$ satisfies
    	\begin{eqnarray*}
    		\omega^{\ast}(0) +\sum_{\ell=2}^{\infty} {\omega^{\ast}(\ell)\over \ell \sqrt{{\rm log}\ \ell}} <\infty.
    	\end{eqnarray*}
    	Then the operator ${\mathcal M}_m$ is bounded on $L^p({\mathbb R^n})$.   	
    	\end{proposition}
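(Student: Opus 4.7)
The plan is to reduce, via a dyadic frequency decomposition and a discretization of the dilation parameter, to the finite-$N$ maximal multiplier estimate
\[
\Bigl\|\sup_{1\le i\le N}|n_{i}(D)f|\Bigr\|_{p}\lesssim \sqrt{\log(1+N)}\,\max_{i}\|n_{i}\|_{W^{q}_{\alpha}}\|f\|_{p},
\]
which is the central technical result of \cite{GHS2006}, and then to sum the resulting bounds over dyadic scales using the hypothesis on $\omega^{*}$.

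First, fix a radial $\phi\in C_{c}^{\infty}(\RR^{n})$ supported in $\{|\xi|\sim 1\}$ with $\sum_{k\in\ZZ}\phi(2^{-k}\xi)\equiv 1$ for $\xi\neq 0$. Writing $m(t\xi)=\sum_{k}m(t\xi)\phi(2^{-k}t\xi)$ and using the scale invariance of $L^{p}(\RR^{n})$, the estimate for ${\mc M}_{m}$ reduces to estimating, for each $k\in\ZZ$,
\[
M_{k}f(x)=\sup_{s\in[1,2]}\bigl|\mc{F}^{-1}[\tilde m_{k}(s\cdot)\hat f\,](x)\bigr|,\qquad \tilde m_{k}(\xi)=\phi(\xi)\,m(2^{k}\xi),
\]
with the uniform bound $\|\tilde m_{k}\|_{W^{q}_{\alpha}}\le\omega(k)$ coming from the hypothesis. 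Because $\alpha$ exceeds the critical Sobolev index $n/q$ by the positive margin $1/p'$ (when $1<p\le 2$) or $1/p$ (when $p>2$), I plan to trade this extra regularity in $\xi$ for fractional regularity in the dilation parameter $s$, so that $\sup_{s\in[1,2]}|\tilde m_{k}(sD)f(x)|$ is controlled by $\max_{1\le i\le N}|\tilde m_{k}(s_{i}D)f(x)|$ along a grid $\{s_{i}\}\subset[1,2]$, up to an error $R_{N,k}$ with $\|R_{N,k}\|_{p}\lesssim N^{-\eta}\omega(k)\|f\|_{p}$ for some $\eta>0$.

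The crux is then to organize the dyadic scales $k$ by the size of $\omega(k)$: set $E_{j}=\{k:2^{-j-1}<\omega(k)\le 2^{-j}\}$ and $N_{j}=|\{k:\omega(k)>2^{-j-1}\}|$, so that the normalized symbols $\tilde m_{k}/\omega(k)$ have unit Sobolev norm on the layer $E_{j}$. Applying the $\sqrt{\log(1+\#)}$ maximal estimate above to the combined collection $\{\tilde m_{k}(s_{i}\cdot)/\omega(k):k\in E_{j},\,1\le i\le N\}$ (of cardinality $\lesssim NN_{j}$) and choosing $N=N_{j}^{1/\eta}$ to absorb the discretization error would give
\[
\Bigl\|\sup_{k\in E_{j}}M_{k}f\Bigr\|_{p}\lesssim 2^{-j}\sqrt{\log(1+N_{j})}\,\|f\|_{p}.
\]
Summing in $j$ and applying Abel summation together with the elementary estimate $\sum_{2\le\ell\le N}(\ell\sqrt{\log\ell})^{-1}\approx\sqrt{\log N}$ identifies
\[
\sum_{j\ge 0}2^{-j}\sqrt{\log(1+N_{j})}\ \approx\ \omega^{*}(0)+\sum_{\ell\ge 2}\frac{\omega^{*}(\ell)}{\ell\sqrt{\log\ell}},
\]
which is finite by hypothesis and yields the desired $L^{p}$ bound on ${\mc M}_{m}f$.

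The hard part will be the $\sqrt{\log(1+N)}$ bound itself with the sharp Sobolev thresholds $\alpha>n/p+1/p'$ and $\alpha>n/2+1/p$: attaining these exponents requires refined vector-valued Littlewood--Paley arguments (for $p\le 2$) and atomic $H^{1}$--$\mathrm{BMO}$ duality combined with Fefferman's sharp function technique (for $p>2$), well beyond what Calder\'on--Zygmund theory alone provides. It is precisely this finite-$N$ estimate that the present paper aims to extend to the abstract semigroup setting via the Doob transform.
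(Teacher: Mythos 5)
Your high-level skeleton (layer the dyadic pieces by the size of $\omega$, apply the finite-$N$ bound with $A(N)=O(\sqrt{\log(1+N)})$ on each layer, and recover the rearrangement condition by Abel summation) matches the strategy of \cite{GHS2006} and of the paper's proof of its analogue, Theorem~\ref{Theorem3}. The Abel-summation identification of $\sum_j 2^{-j}\sqrt{\log(1+N_j)}$ with $\omega^*(0)+\sum_\ell \omega^*(\ell)/(\ell\sqrt{\log\ell})$ is also correct (the paper organizes the same sum by rank, via $F_j=\{k:\omega^*(2^{2^j})<\omega(k)\le\omega^*(2^{2^{j-1}})\}$, so that $\sqrt{\log\mathrm{Car}(F_j)}\approx 2^{j/2}$; your layering by value is equivalent).

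However, there is a genuine gap at the central reduction. Writing $m(t\xi)=\sum_k m(t\xi)\phi(2^{-k}t\xi)$, the $k$-th piece acts on the frequency band $|\xi|\sim 2^k/t$, so for each fixed $t$ the pieces enter as a \emph{sum} over essentially disjoint frequency bands, not as a supremum. Consequently ${\mathcal M}_{m_j}f=\sup_t|\sum_{k\in E_j}\tilde m_k(2^{-k}tD)f|$ is \emph{not} controlled by $\sup_{k\in E_j}M_kf$; pointwise, a sum of $M$ frequency-disjoint pieces can exceed their supremum by a factor $M$. So applying the finite-$N$ theorem to the collection $\{\tilde m_k(s_i\cdot)/\omega(k)\}$, which bounds $\sup_{k,i}|\tilde m_k(s_iD)f|$, does not bound the object you need. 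The actual argument keeps each layer as a \emph{single} multiplier $\widetilde m_j(\xi)=\sum_{k\in F_j}m(\xi)\phi(2^{-k}\xi)$ and must then control $\sup_{d\in\ZZ}|\widetilde m_j(2^dD)f|$ --- an \emph{infinite} family, to which the finite-$N$ theorem cannot be applied directly. This is resolved by the tiling lemma (Lemma~\ref{Lemma6}): $\ZZ=\bigcup_{|\alpha|\le 4^{N+1}}(B+\alpha)$ with the translates $b_i+F_j$ pairwise disjoint, so that for fixed $\alpha$ the multipliers $\widetilde m_j(2^{b_i+\alpha}\cdot)$ have disjoint dyadic supports and the supremum over $i$ is absorbed by a Rademacher randomization into a single multiplier of norm $\lesssim\sup_{k\in F_j}\|\phi m(2^k\cdot)\|_{W^q_\alpha}$; only the supremum over the $\lesssim\mathrm{Car}(F_j)^{O(1)}$ shifts $\alpha$ is fed into the $\sqrt{\log(1+N)}$ estimate. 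Your proposal omits both the tiling lemma and the randomization over disjoint bands, which are the combinatorial heart of the proof.

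A secondary issue: the continuous parameter $s\in[1,2]$ is not handled in the paper by a finite grid with an $N^{-\eta}$ error; it is handled by the Sobolev-embedding-in-$t$ inequality \eqref{IBJ}, interpolating between $\widetilde m_{j,\ell}(2^dtL)$ and its $t$-derivative, which is exactly what consumes the extra $1/p$ (resp. $1/p'$) of smoothness in the hypothesis $\alpha>n/2+1/p$ (resp. $\alpha>n/p+1/p'$). Your grid-discretization claim would need its own proof and, as written, does not explain where these specific thresholds come from.
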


To prove Proposition~\ref{prop00},
 Grafakos, Honz\'ik and Seeger \cite{GHS2006}
    considered    $N$ multipliers  $m_i,1\leq i\leq N $, satisfying
the condition  \eqref{e1.1} and asked for bounds:
\begin{eqnarray}\label{e1.2}
	\left\|\sup_{1\leq i\leq  N}  \big|{\mathcal F}^{-1}[m_i {\hat f}]\big|\,\right\|_{L^p(\RR^n)} \leq A(N)  \|f\|_{L^p(\RR^n)}.
\end{eqnarray}
They  obtained   an optimal bound  in $L^p({\mathbb R^n})$ with  $A(N) =O(\sqrt{{\rm log} (N+1)})$ provided that   $1\leq r<2$,   $p\in (r, \infty)$ and  the multipliers $m_i$, $i=1, \cdots, N,$  satisfy the condition
$$
\sup_{t>0} \left\|\phi m_i(t \cdot)\right\|_{W^2_{\alpha}({\mathbb R^n})}<\infty
\ \ \ {\rm for}\ \ \ i=1,2, \cdots, N
$$
and some $\alpha>n/r$.
The growth rate $A(N)=O(\sqrt{{\rm log} (N+1)}) $  in \eqref{e1.2} is known to be sharp due to the example  in
\cite{CGHS2005}, which  plays an essential role in the proof of
the result of Grafakos-Honz\'ik-Seeger \cite[Theorem 1.3]{GHS2006}, that is,
Proposition \ref{prop00} above.

 Later,  Choi \cite{C2015} extended the result of Grafakos-Honz\'ik-Seeger  \cite{GHS2006}     to the  multiplier operators on stratified groups.

\subsection{Spectral multipliers   }
The purpose of this paper is to extend  the result of Grafakos-Honz\'ik-Seeger  \cite[Theorem 1.3]{GHS2006}     to general spectral  multipliers on spaces of homogeneous type.
Let $(X,d,\mu)$ be a metric measure space with a metric $d$ and a measure $\mu$, which satisfies doubling measure, i.e.  there  exists a constant $C>0$ such that
\begin{eqnarray}
	\mu(B(x,2r))\leq C \mu(B(x, r)),\quad \forall\,r>0,\,x\in X. \label{eq2.1}
\end{eqnarray}
If this is the case, there exist  $C, n>0$ such that for all $\lambda\geq 1$ and $x\in X$
\begin{equation}
\mu(B(x, \lambda r))\leq C\lambda^n \mu(B(x,r)). \label{eq2.2}
\end{equation}
%In the sequel we want to consider $n$ as small as possible.
%Note that in general one cannot take infimum over such exponents $n$ in \eqref{eq2.2}.
In the Euclidean space with Lebesgue measure, $n$ corresponds to
the dimension of the space.

 Suppose  that $L$ is a non-negative
self-adjoint  operator on~$L^2(X)$ and that
the semigroup $e^{-tL}$, generated by $-L$ on $L^2(X)$,  has the kernel  $T_t(x,y)$
which  satisfies
the following  Gaussian upper bound:
\begin{equation*}\label{ge}
	\tag{GE}
	\big|T_t(x,y)\big|\leq {C\over\mu(B(x,\sqrt{t}))} \exp\left(-{  d^2(x,y)\over {ct} } \right)
\end{equation*}
for all $t>0$,  and $x,y\in X,$   where $C$ and $ c$   are positive constants.
Such an operator admits a spectral
resolution  $E_L(\lambda)$  and for  any  bounded  Borel function $m\colon [0, \infty)
\to {\mathbb C}$, one can define the operator $m(L)$
\begin{equation*}%\label{e1.3}
	m(L)=\int_0^{\infty}m(\lambda) \wrt E_L(\lambda).
\end{equation*}
By the spectral theorem, $m(L)$ is well defined and bounded on~$L^2(X)$. Spectral multiplier
theorems give sufficient conditions on~$m$ under which the operator~$m(L)$ extends to a
bounded operator on~$L^p(X)$ for some range of~$p$. This topic has attracted a lot of attention and has been studied extensively by many authors: for example,
for sub-Laplacian on nilpotent groups in \cite{Ch}, for sub-Laplacian on Lie groups of polynomial growth in \cite{A1994},
for Schr\"odinger operator on Euclidean space $\mathbb R^n$ in \cite{He}, for sub-Laplacian on
Heisenberg groups in \cite{MSt} and many others. For more information about the background of this topic,
the reader is referred to
\cite{A1994, BFP2016, COSY2016, Ch, DOS2002, He, MSt}  and the references therein.

We wish to point out \cite{DOS2002}, which is closely related to our paper. In \cite{DOS2002},
Duong, Ouhabaz and Sikora established the following  H\"ormander-type spectral  multiplier theorem for a
non-negative self-adjoint operator $L$   under  the assumption of the
kernel $T_t(x,y)$ of the analytic semigroup $e^{-tL}$ having
a Gaussian upper bound \eqref{ge}.

 \begin{proposition}\label{thmbydo}
 Let $L$ be a non-negative self-adjoint operator such that the corresponding
 heat kernels satisfy Gaussian bound  \eqref{ge}.
  Assume that for any $R>0$ and all Borel functions $m$  such that\, {\rm supp} $m\subseteq [0, R]$,
 \begin{eqnarray}\label{e1.4}
 	\int_X  \left|K_{m(\sqrt{L})}(x,y)\right|^2 d\mu(x) \leq {C\over \mu(B(y, R^{-1}))} \|  m(R\cdot)\|^2_{L^q}
 \end{eqnarray}
 for some $q\in[2,\infty]$. Let $s>n/ 2$.
Then for any bounded Borel function $m$
such that
\begin{eqnarray}\label{e1.44}
\sup_{t>0} \left\|\phi m( t\cdot)\right\|_{W^{q}_s}<\infty,
 \end{eqnarray} where $\phi\in C_c^\infty(\RR^+)$ is a fixed function, not identically zero,
the operator $m(L)$ is bounded on $L^p(d\mu)$ for all $1<p<\infty$, in addition,
\begin{align*}
  \|m(L)\|_{L^p(d\mu)\rightarrow L^p(d\mu)}\leq C_s \Big (|m(0)|+\sup_{t>0}\|\phi m( t\cdot)\|_{W^{q}_s}\Big).
\end{align*}
 \end{proposition}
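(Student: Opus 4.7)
The plan is the classical Calder\'on--Zygmund strategy adapted to the semigroup setting: derive a H\"ormander-type integral condition on the kernel of $m(L)$, combine it with the trivial $L^2$ bound, deduce weak type $(1,1)$, and then interpolate and dualise. First set $F(\lambda) = m(\lambda^2)$ so that $m(L) = F(\sqrt L)$ (the hypothesis on $m$ transfers to $F$ up to a harmless change of variable); choose $\psi \in C_c^\infty([1/2,2])$ with $\sum_{k\in\mathbb Z} \psi(2^{-k}\lambda) = 1$ for $\lambda > 0$, and decompose $F = m(0)\chi_{\{0\}} + \sum_{k\in\mathbb Z} F_k$ with $F_k(\lambda) = F(\lambda)\psi(2^{-k}\lambda)$ supported in $[2^{k-1}, 2^{k+1}]$.

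\textbf{Weighted Plancherel estimate.} The heart of the argument, and the main obstacle, is to upgrade \eqref{e1.4} to a weighted version: for $s > 0$,
$$\sup_{y\in X} \int_X |K_{F_k(\sqrt L)}(x,y)|^2 \bigl(1 + 2^k d(x,y)\bigr)^{2s}\, d\mu(x) \le \frac{C_s \,\|F_k(2^k \cdot)\|_{W^q_s}^2}{\mu(B(y, 2^{-k}))}.$$
This is obtained by representing $F_k(\sqrt L) = (2\pi)^{-1}\int \widehat F_k(\xi)\cos(\xi\sqrt L)\,d\xi$ and exploiting the finite-propagation-speed property $\supp K_{\cos(t\sqrt L)} \subseteq \{(x,y): d(x,y)\le ct\}$, which follows from the Davies--Gaffney estimate that is a by-product of \eqref{ge}. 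Smoothly truncating $\widehat F_k$ at dyadic scales around $2^{-k}$ and interpolating between \eqref{e1.4} applied to the truncated pieces and to their derivatives produces the weighted bound.

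\textbf{H\"ormander integral condition.} From the weighted Plancherel estimate, Cauchy--Schwarz with the weight $(1 + 2^k d(x,y))^{-s}$ together with doubling \eqref{eq2.2} gives, for any $\alpha \ge 1$ and $s > n/2$,
$$\int_{d(x,y) > \alpha 2^{-k}} |K_{F_k(\sqrt L)}(x,y)|\, d\mu(x) \le C\, \alpha^{n/2-s}\, \|F_k(2^k\cdot)\|_{W^q_s}.$$
Applying the same circle of ideas to a regularised difference (for instance, to $F_k(\lambda)(1 - \cos(\lambda d(y,y_0)))$, or equivalently by controlling $\lambda\,\partial_\lambda F_k$) yields the $y$-regularity estimate
$$\int_{d(x,y) > 2 d(y,y_0)} |K_{F_k(\sqrt L)}(x,y) - K_{F_k(\sqrt L)}(x,y_0)|\, d\mu(x) \le C\min\{1, 2^k d(y,y_0)\}^{\delta}\, \|F_k(2^k\cdot)\|_{W^q_s}$$
for some $\delta > 0$. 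Summing over $k\in\mathbb Z$ (the $\min$ provides geometric decay in both the small- and large-scale regimes) yields the H\"ormander integral condition for the kernel of $m(L)$, with constant controlled by $\sup_{t>0} \|\phi m(t\cdot)\|_{W^q_s}$.

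\textbf{Conclusion.} The spectral theorem gives $\|m(L)\|_{L^2\to L^2} \le \|F\|_\infty \le |m(0)| + C\sup_{t>0} \|\phi m(t\cdot)\|_{W^q_s}$ (the last step via Sobolev embedding, since $s > n/2 \ge n/q$). Combined with the H\"ormander integral condition, standard Calder\'on--Zygmund theory delivers weak type $(1,1)$; Marcinkiewicz interpolation then yields $L^p$ boundedness for $1 < p \le 2$, and duality (applied to $\overline m(L) = m(L)^*$, which satisfies the same hypothesis) extends the result to $2 \le p < \infty$, with the stated operator-norm estimate. The genuine difficulty is concentrated in the weighted Plancherel step, since it is the unique place where the geometric information in \eqref{ge} (through finite propagation speed) must be coupled with the abstract $L^q$-type hypothesis \eqref{e1.4}; the remaining steps are a fairly mechanical adaptation of the Euclidean Calder\'on--Zygmund scheme to the doubling metric-measure setting.
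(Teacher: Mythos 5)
This proposition is not proved in the paper; it is quoted from Duong--Ouhabaz--Sikora \cite{DOS2002}, so the comparison below is with the argument there (which the paper also reuses in Section 6, e.g.\ Lemma \ref{SGL2} and Proposition \ref{H1toL1}).

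Your overall architecture (dyadic decomposition, weighted Plancherel estimate via finite propagation speed, an integral H\"ormander-type condition, weak $(1,1)$ plus interpolation and duality) is the right one, and you correctly identify the weighted Plancherel estimate as the crux; your sketch of that step (truncating $\widehat{F_k}$ at dyadic scales and using $\supp K_{\cos(t\sqrt L)}\subseteq\{d(x,y)\le ct\}$ together with \eqref{e1.4}) is essentially Lemma 4.3 of \cite{DOS2002}. Likewise the decay estimate $\int_{d(x,y)>\alpha 2^{-k}}|K_{F_k(\sqrt L)}(x,y)|\,d\mu(x)\lesssim \alpha^{n/2-s}\|F_k(2^k\cdot)\|_{W^q_s}$ follows correctly from Cauchy--Schwarz and doubling.

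The genuine gap is the $y$-regularity estimate
\begin{equation*}
\int_{d(x,y)>2d(y,y_0)}\left|K_{F_k(\sqrt L)}(x,y)-K_{F_k(\sqrt L)}(x,y_0)\right|d\mu(x)\le C\min\{1,2^kd(y,y_0)\}^{\delta}\,\|F_k(2^k\cdot)\|_{W^q_s}.
\end{equation*}
Under the hypotheses (GE) and \eqref{e1.4} alone the heat kernel has no H\"older (not even $L^1$-modulus) continuity in $y$, so the kernel of $F_k(\sqrt L)$ need not have any spatial regularity in the second variable, and this estimate is not derivable. Your two suggested routes to it do not work: $\lambda\partial_\lambda F_k$ controls scaling of the multiplier, not spatial regularity of the kernel; and $F_k(\lambda)\bigl(1-\cos(\lambda\, d(y,y_0))\bigr)$ is a different spectral multiplier whose kernel is \emph{not} $K_{F_k(\sqrt L)}(x,y)-K_{F_k(\sqrt L)}(x,y_0)$ --- that identification uses translation invariance and fails on a general metric measure space. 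This is precisely the obstruction that the Duong--McIntosh singular integral theory \cite{DM1999} was designed to remove: instead of regularity in $y$, one verifies
\begin{equation*}
\sup_{y,\,r>0}\int_{X\setminus B(y,2r)}\left|K_{m(L)(I-e^{-r^2L})}(x,y)\right|d\mu(x)<\infty,
\end{equation*}
where the spectral factor $1-e^{-r^2\lambda^2}$ supplies the small factor $\min\{1,(2^kr)^2\}$ needed to sum the dyadic pieces at small scales, while the weighted Plancherel decay handles the large scales. This is the criterion used in \cite{DOS2002} and reproduced in this paper's Lemma \ref{SGL2}; replacing your regularity step by it repairs the proof, and the remaining steps (the $L^2$ bound via $s>1/q$ Sobolev embedding, interpolation, duality) are fine as you state them.
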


\medskip

Note that Gaussian bound \eqref{ge} implies estimates (\ref{e1.4}) for $q=\infty$. This means that one can omit
condition (\ref{e1.4}) if the case $q=\infty$ is considered.
We call hypothesis  (\ref{e1.4})   the {\it Plancherel estimates} or {\it Plancherel
conditions}. For the standard Laplace operator on Euclidean spaces $\mathbb R^n$, this is equivalent to
$(1,2)$ Stein-Tomas restriction theorem (which is also the Plancherel estimate of the Fourier transform).

%\subsection{Assumptions and Main results}
%Let $(X,d,\mu)$ be a metric measure space with a metric $d$ and a measure $\mu$, which satisfies doubling measure, i.e.  there  exists a constant $C>0$ such that
%\begin{eqnarray}
%	\mu(B(x,2r))\leq C \mu(B(x, r)),\quad \forall\,r>0,\,x\in X. \label{eq2.1}
%\end{eqnarray}
%If this is the case, there exist  $C, n$ such that for all $\lambda\geq 1$ and $x\in X$
%\begin{equation}
%\mu(B(x, \lambda r))\leq C\lambda^n \mu(B(x,r)). \label{eq2.2}
%\end{equation}
%%In the sequel we want to consider $n$ as small as possible.
%%Note that in general one cannot take infimum over such exponents $n$ in \eqref{eq2.2}.
%In the Euclidean space with Lebesgue measure, $n$ corresponds to
%the dimension of the space.
%
% We assume that $L$ is a non-negative
%self-adjoint  operator on~$L^2(X)$ and that
%the semigroup $e^{-tL}$, generated by $-L$ on $L^2(X)$,  has the kernel  $T_t(x,y)$
%which  has  Gaussian bound \eqref{ge}.

We consider the following two maximal functions formed by the dilations of a single multiplier associated with operator $L$:
 \begin{eqnarray*}
 M^{\rm dyad}_{m, L}f(x)=\sup_{k\in {\mathbb Z}} \left |   m(2^kL) f(x) \right|
\end{eqnarray*}
and
\begin{eqnarray*}
 M_{m, L}f(x)=\sup_{t>0}   \left|   m(tL) f(x) \right|.
 \end{eqnarray*}

%Dapper and Trebels \cite{DT1985} proved that $M_{m, L}$ is bounded on $L^p(d\mu)$ in a rather weaker differentiability conditions when $L$ is the Laplace $\Delta$ on Euclidean space. It follows from \cite{DT1985}, for all $1<p<\infty$ and some $\varepsilon>0$, let $r=\min\{p,2\}$,
% \begin{eqnarray}\label{eq1.2.1}
% \|M_{m, \Delta}f\|_{L^p(d\mu)}\leq C_p\Big(\sum_{k\in\ZZ}\|\phi m(2^k\cdot)\|_{W^r_{n/r+\varepsilon}}^2\Big)^{\frac12}\|f\|_{L^p(d\mu)}. \end{eqnarray}

From the example in \cite{CGHS2005}, Mikhlin-H\"ormander type assumption
in \eqref{e1.44} or \eqref{e1.1} alone is not sufficient to guarantee the $L^p$-boundedness of  $M_{m, L}$.
Additional decay assumptions should be involved. Let $\phi\in C_c^\infty(\RR)$  supported in $\big \{\xi:1/2\leq  |\xi|\leq 2\big\}$ which is not zero. In Section \ref{sec:3}, we prove the following result of the boundedness of $M_{m, L}$.
%Let $1<p<\infty$, Christ-Grafakos-Honz\'ik-Seeger \cite{CGHS2005}  showed  that $M_{m, \Delta}$ is bounded on $L^p(\RR^n)$ if
% \begin{eqnarray}\label{eq1.2.1}
% \|M_{m, L}f\|_{L^p(d\mu)}\leq C_p\Big(\sum_{k\in\ZZ}\|\phi m(2^k\cdot)\|_{W^r_{n/r+\varepsilon}}^2\Big)^{\frac12}\|f\|_{L^p(d\mu)}. \end{eqnarray}
%where $r=\min\{p,2\}$ and $q\geq1$.
%
%For general operators $L$ which satisfies Gaussian estimate \eqref{ge}, what sufficient conditions can one impose to ensure that   $M_{m, L}$ are bounded on $L^p(d\mu)$ for some range of $p$? Our first result give a rather weak sufficient condition to answer it, which is stated in the following way.
%
%\begin{theorem}\label{Theorem4}
% Suppose that   $L$ is a self-adjoint operator that satisfies   Gaussian estimate  \eqref{ge}.
%Assume that
%\begin{eqnarray}\label{eqe2.2.1}
%\sum_{k\in\ZZ}\|\phi m(2^k\cdot)\|_{W^q_{n/2+1/2+\varepsilon}}+|m(0)|< \infty
% \end{eqnarray}
% for some $\varepsilon>0$. Then $M_{m, L}$ is of weak type $(1,1)$ and bounded on $L^p(d\mu)$ for all $1<p<\infty$.
%\end{theorem}

\begin{theorem}\label{Theorem2}
 Suppose that   $L$ is a self-adjoint operator that satisfies   Gaussian estimate  \eqref{ge} and Plancherel condition \eqref{e1.4} for some $q\in[2,\infty]$. Suppose   $m$ is a bounded Borel function satisfying
% \begin{align}\label{eq1.2.2}
%   \sum_{k\in\ZZ} \|\phi m(2^k\cdot)\|_{L^1_{s}}+|m(0)|<\infty.
% \end{align}
% and
  \begin{align}\label{eq1.2.3}
  \Big (\sum_{k\in\ZZ} \|\phi m(2^k\cdot)\|_{W^2_{s}}^2\Big )^{1/2}+|m(0)|<\infty
 \end{align}
for some $s>n/2-1/q+1$, then $M_{m,L}$ is bounded on $L^p(d\mu)$ for all $1<p< 2$.
 If  $m$ satisfies \eqref{eq1.2.3} for some $s>n/2-1/q+1/2$, then $M_{m,L}$ is bounded on $L^p(d\mu)$ for all $2\leq p<\infty$.
\end{theorem}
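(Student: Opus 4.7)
The plan is to adapt the strategy of Grafakos-Honz\'ik-Seeger \cite{GHS2006} to the spectral multiplier setting, in two stages: first, a reduction from the continuous supremum $\sup_{t>0}$ to the dyadic supremum $\sup_{k\in\ZZ}$ via a vertical square function; second, control of the dyadic maximal function via the $\sqrt{\log(1+N)}$ maximal inequality for finite families of multipliers, which is the main technical result of the paper (used here as a black box). Fix a smooth bump $\phi\in C_c^\infty((1/2,2))$ with $\sum_{j\in\ZZ}\phi(2^{-j}\lambda)\equiv 1$ on $(0,\infty)$, and set $m_j(\mu):=\phi(\mu)m(2^j\mu)$, $\omega(j):=\|m_j\|_{W^2_s}$, and $\Omega:=\big(\sum_j\omega(j)^2\big)^{1/2}$; by \eqref{eq1.2.3}, $\Omega<\infty$.

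For Stage~1, the fundamental theorem of calculus gives, for $t\in[2^k,2^{k+1}]$,
\[
m(tL)f-m(2^kL)f=\int_{2^k}^{t} L\,m'(sL)f\,ds,
\]
and Cauchy-Schwarz with measure $ds/s$ followed by summation over $k$ produces
\[
\sup_{t>0}|m(tL)f|(x)\leq \sup_{k\in\ZZ}|m(2^kL)f|(x) + C\,G_\psi f(x),
\]
where $G_\psi f(x):=\bigl(\int_0^\infty|\psi(sL)f(x)|^2\tfrac{ds}{s}\bigr)^{1/2}$ and $\psi(\lambda)=\lambda m'(\lambda)$. Differentiating $\phi m(2^j\cdot)$ yields $\|\phi\psi(2^j\cdot)\|_{W^2_{s-1}}\leq C\omega(j)$, so the $\ell^2$-summability transfers to $\psi$ at Sobolev index $s-1$; combined with standard vertical square function theory for nonnegative self-adjoint operators satisfying Gaussian estimates (cf.~\cite{COSY2016, DOS2002}), this gives $\|G_\psi f\|_p\leq C\Omega\|f\|_p$ whenever $s-1>n/2-1/q$, matching the hypothesis for $p<2$. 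In the endpoint case $p\geq 2$, where only a half extra derivative is available, the pointwise FTC is replaced by an $L^2$-valued Sobolev embedding $W^{1/2}_2([1,2])\hookrightarrow L^\infty([1,2])$ in the variable $u=s/2^k$, losing only half a derivative in agreement with the hypothesis $s>n/2-1/q+1/2$.

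For Stage~2, decompose $m=m(0)+\sum_n\widetilde m_n$ with $\widetilde m_n(\lambda):=\sum_{j\in E_n}\phi(2^{-j}\lambda)\,m(\lambda)$, where $E_n:=\{j:2^{-n-1}\Omega<\omega(j)\leq 2^{-n}\Omega\}$ and $|E_n|\leq C\cdot 4^n$ by Chebyshev. For fixed $n$, the multiplier $\lambda\mapsto\widetilde m_n(2^k\lambda)$ is nonzero only for $k$ within $O(1)$ of some $j\in E_n$ (since $\phi$ is supported in a dyadic annulus), yielding at most $\sim|E_n|$ effectively distinct multipliers; moreover, for each such $k$ one checks that
\[
\sup_{t>0}\|\phi(\cdot)\,\widetilde m_n(2^k t\cdot)\|_{W^2_s}\leq C\sup_{j\in E_n}\omega(j)\leq C\cdot 2^{-n}\Omega,
\]
because only indices $j\in E_n$ contribute after restriction to the annulus $\supp\phi$. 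Applying the paper's $\sqrt{\log(1+N)}$ maximal inequality to this effective family of $N\leq C|E_n|$ multipliers gives
\[
\Big\|\sup_{k\in\ZZ}|\widetilde m_n(2^kL)f|\Big\|_p\leq C\sqrt{\log(1+|E_n|)}\cdot 2^{-n}\Omega\|f\|_p\leq C\sqrt{n+1}\cdot 2^{-n}\Omega\|f\|_p,
\]
and summation over $n\geq 0$ yields $\|\sup_k|m(2^kL)f-m(0)f|\|_p\leq C\Omega\|f\|_p$, which combined with Stage~1 completes the proof.

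I expect the main obstacle to lie in Stage~2, namely the verification that the effective family $\{\widetilde m_n(2^k\cdot)\}_k$ admits a uniform $W^2_s$-bound of the sharp size $2^{-n}\Omega$ (rather than $\Omega$ or $\sup_j\omega(j)$), so that the geometric factor $2^{-n}$ dominates the $\sqrt{n}$ arising from the logarithm; this rests on the critical observation that, on the annulus $\supp\phi$, only indices $j\in E_n$ enter the restricted multiplier. A secondary but delicate point is the half-derivative endpoint case $p\geq 2$ in Stage~1, where the straightforward pointwise FTC is not available and must be replaced by an $L^2$-Sobolev embedding on the interval $[1,2]$.
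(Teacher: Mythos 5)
Your proposal takes a genuinely different route from the paper, but it contains a gap that is fatal under the stated hypotheses. Theorem~\ref{Theorem2} assumes only the Gaussian bound \eqref{ge} and the Plancherel condition \eqref{e1.4}; it does \emph{not} assume the existence of an $L$-harmonic function $h$ with \rm{(H-1)} and \eqref{LUGh}, nor the weight condition $h^{p-2}\in A_{p/r}(h^2d\mu)$. Your Stage~2 invokes the $\sqrt{\log(1+N)}$ maximal inequality (Theorem~\ref{Theorem1}) as a black box, but that theorem requires precisely those extra hypotheses (its proof runs through the Doob transform, the Chang--Wilson--Wolff good-$\lambda$ inequality for the dyadic martingale in $(X,d,h^2d\mu)$, and weighted maximal function bounds). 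So the GHS-type decomposition into the sets $E_n$ with $|E_n|\lesssim 4^n$ cannot be closed here. The whole point of Theorem~\ref{Theorem2} is that the stronger $\ell^2$-summability hypothesis \eqref{eq1.2.3} buys the conclusion \emph{without} the harmonic-weight assumptions; the $\sqrt{\log N}$ machinery is reserved for Theorems~\ref{Theorem3} and \ref{Theorem8}. Two secondary issues: the claim that $\sup_{k\in\ZZ}|\widetilde m_n(2^kL)f|$ involves "at most $\sim|E_n|$ effectively distinct multipliers" is not correct as stated, since the supremum runs over all of $\ZZ$ and each dilate $\widetilde m_n(2^k\cdot)$ is a genuinely different symbol; the reduction to $N\sim|E_n|$ multipliers requires the tiling Lemma~\ref{Lemma6} together with Rademacher randomization, as in Section~\ref{sec:6}. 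And in Stage~1, the bound $\|G_\psi f\|_p\le C\Omega\|f\|_p$ for $1<p<2$ with the $\ell^2$ constant $\Omega$ is asserted but not proved; Minkowski's inequality only yields the $\ell^1$ sum of the $\omega(j)$ in that range, so this step needs a genuine vector-valued argument.

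For comparison, the paper's proof avoids both maximal inequalities and dyadic decompositions of $m$ entirely. It uses Carbery's subordination formula \eqref{fde} to write $m(t\lambda)$ as a superposition of Bochner--Riesz kernels $\frac{t\lambda}{s}(1-\frac{t\lambda}{s})_+^{\mu-1}$ weighted by the fractional derivative $s^{\mu}(m(s)/s)^{(\mu)}$; Cauchy--Schwarz in $ds/s$ then gives the pointwise domination $\sup_{t>0}|m(tL)f(x)|\le C_\mu\,\mathcal{G}_{\mu-1}(L)f(x)\,\|m\|_{L^2_\mu}$ by the Stein square function. Lemma~\ref{lemmaTheorem2} converts \eqref{eq1.2.3} into $\|m\|_{L^2_\mu}\lesssim\big(\sum_k\|\phi m(2^k\cdot)\|^2_{W^2_\mu}\big)^{1/2}$, and Lemma~\ref{Steinsquare} supplies the $L^p$ bounds for $\mathcal{G}_{\mu-1}(L)$ with exactly the thresholds $s>n/2-1/q+1$ (for $1<p<2$) and $s>n/2-1/q+1/2$ (for $2\le p<\infty$). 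This is why the theorem needs no assumptions beyond \eqref{ge} and \eqref{e1.4}. If you want to salvage your approach, you would either have to add the hypotheses of Theorem~\ref{Theorem1} (which changes the statement) or replace Stage~2 by an argument that works under \eqref{ge} and \eqref{e1.4} alone.
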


%$s>\min\{n/2-1/q+1/2, (n+1-2/q)(1/2-1/p))+1/2\}$ if $p\geq2$,

%We believe that to obtain the sharp bound of maximal function for spectral multipliers in $L^p(d\mu)$ one has to ensure the sharp spectral multipliers theorem.
%Following from the example from \cite{CGHS2005}, Gaussian estimate~\eqref{ge},  Plancherel estimates \eqref{e1.4} and \eqref{e1.1} are not enough to be the sufficient conditions for  the $L^p$-boundedness of the maximal functions.

%In stratified group, Mauceri-Meda \cite{MM1990} showed that $M_{m,L}$ is bounded on $L^p(d\mu)$ if
% \begin{eqnarray}\label{eq1.2.2}
% \sum_{k\in\ZZ}\|\phi m(2^k\cdot)\|_{W^2_\alpha}<\infty
% \end{eqnarray}
%for $\alpha>n(1/p-1/2)+1/2$ if $p\in(1,2)$, or $\alpha>(n-1)(1/2-1/p)$ if $p\in[2,\infty)$.

%In the following, we would  give rather weak conditions to ensure the boundedness of $M^{\rm dyad}_{m, L}$ and $M_{m, L}$ on $L^p(d\mu)$ for some range of $p$.

Let $\{\nu(k)\}_{k=0}^\infty$ be a positive increasing and unbounded sequence. Christ,   Grafakos,  Honz\'ik and Seeger \cite{CGHS2005} constructed   a multiplier function $m$ satisfying
\begin{align}\label{eq1.8.8}
\sup_\xi \left |\partial_\xi^\alpha (\phi(\xi)m(2^k\xi)) \right |\leq C_\alpha\frac{\nu(|k|)}{\sqrt{1+\log|k|}},\quad k\in \mathbb Z
\end{align}
with $C_\alpha$ for all $\alpha\in \mathbb N^+$ so that $M_{m,L}$ is unbounded on $L^p(\RR^n)$ for all $1<p<\infty$.

In terms of the positive result of Theorem~\ref{Theorem2}, we note that there is a significant gap between the
the condition in \eqref{eq1.2.3} and the weak decay \eqref{eq1.8.8}. Assuming $\|\phi m(2^k\cdot)\|_{W^2_{s}}=O(|k|^{-\alpha})$, then \eqref{eq1.2.3} yields $L^p$ boundedness for all $1<p<\infty$ when $\alpha>1/2$.
We shall see that this result remains in fact valid under the weaker assumption
$$
\left\|\phi m(2^k\cdot)\right \|_{W^2_{s}}\leq C \big(\sqrt{1+\log|k|}\big)^{-1-\epsilon}.
$$
In what follows we shall mainly aim for minimal decay but will also try to formulate
reasonable smoothness assumptions.

%The proof of Theorem~\ref{Theorem2} and \ref{Theorem4} are based on the theory of Bochner-Riesz means and stein square functions.  Grafakos-Honz\'ik-Seeger  \cite{GHS2006} improves the results Christ-Grafakos-Honz\'ik-Seeger \cite{CGHS2005}, which inspired us.
%We hope that we can  extend  the result of  Grafakos-Honz\'ik-Seeger  \cite{GHS2006}
%to the maximal functions  associated to  spectral multipliers.
In order to do this, more assumptions on the operator $L$ are needed.
 Besides Gaussian upper bound~\eqref{ge}, in some applications, the operator $L$ still satisfies the following conditions. There exists a positive  function $h\in L^1_{loc}(X)$ with $h:X\rightarrow (0,\infty)$ so that

% To do it, we assume that $L$ is a non-negative
%self-adjoint  operator on~$L^2(X)$ and that
%the semigroup $e^{-tL}$, generated by $-L$ on $L^2(X)$,  has the kernel  $T_t(x,y)$
%which  has  Gaussian bound \eqref{ge}.
%%Apart from owing Gaussian bounds \eqref{ge}, some operators (such as  Schr\"odinger operators with inverse square potential, Dirchlet Laplacian with Dirchlet boundary)
%Motivated from \cite{PSY2022} to use Doob transform to build the   Hardy space  with the atoms owing some cancelation property,
%% for some Schr\"odinger operators  and Bessel operators etc.,
%we  wish to use the Doob transform to handle the cancellation properties of martingale difference and Littlewood-Paley operators, which is a necessary observation in \cite{GHS2006}.
% We  assume that there exists a function $h:X\rightarrow (0,\infty)$ such that
\begin{itemize}
\item[(H-1).] For any $t>0$
\begin{align*}
  T_t(h)(x)=h(x),\ \ {\rm{a.e. }} \ x\in X.
\end{align*}
\item[(H-2).] There exists constant $C>0$ such that for any $x,y\in X$ and $t>0$,
\begin{align}\label{LUGh}
\tag{$\mathrm{ULG_h}$}
\frac{C^{-1}}{\mu_{h^2}(x,\sqrt{t})}\exp\left(-\frac{d(x,y)^2}{c_1t}\right)\leq \frac{T_t(x,y)}{h(x)h(y)}\leq \frac{C}{\mu_{h^2}(x,\sqrt{t})}\exp\left(-\frac{d(x,y)^2}{c_2t}\right),
\end{align}
\end{itemize}
\noindent where $\mu_{h^2}$ is the measure with the density $h^2d\mu$.

Based on these conditions, we would use the Doob transform to handle the cancellation properties of martingale difference and Littlewood-Paley operators, which is a necessary observation in \cite{GHS2006}.
We want to point out that the condition~\rm{(H-1}) is automatically satisfied. Indeed, if there exists a function $h$ such that the condition~(H-2) hold, then conditions~\rm{(H-1)} and \rm{(H-2)} are  true for $\widetilde{h}=\varphi h$, where $0<C^{-1}\leq\varphi\leq C<\infty$ (see \cite[Propostion~2.3]{PSY2022}). The function $h$ in assumption \rm{(H-1)} may be unbounded, such as  Schr\"odinger operators with inverse-square potential in $\RR^n,n\geq3$ (see Section \ref{sec:7}).

\medskip
Our main result is concerned with the  maximal functions  $ M^{\rm dyad}_{m, L}$ and $M_{m, L}$.
To continue, we first recall the definition of a  rearrangement of a sequence $\omega$, for $t\geq0$,
\begin{align*}
  \omega^*(t)=\sup\big\{\lambda>0: \mathrm{Car} \left(k:|\omega(k)>\lambda| \right)>t\big\},
\end{align*}
where $\mathrm{Car}(E)$ is the cardinality of a given set $E$.
% \begin{eqnarray*}
% M^{\rm dyad}_{m, L}f(x)=\sup_{k\in {\mathbb Z}} |   m(2^kL) f(x)|
%\end{eqnarray*}
%and
%\begin{eqnarray*}
% M_{m, L}f(x)=\sup_{t>0} |   m(tL) f(x)|.
% \end{eqnarray*}
We have the following results.

\begin{theorem}\label{Theorem3}
 Suppose that   $L$ is a self-adjoint operator that satisfies  Gaussian estimate  \eqref{ge} and Plancherel condition \eqref{e1.4} for some $q\in[2,\infty]$. Assume there exists an $L$-harmonic $h$ such that conditions  \rm{(H-1)} and \eqref{LUGh} hold.
Suppose  $1\leq r<2$ and  $r<p<\infty$. For some $s>0$, the bounded Borel function $m$ satisfies
\begin{align}\label{eq1.4.2}
   \left \|\phi m(2^k\cdot) \right \|_{W^q_{s}}\leq \omega(k).
\end{align}
Assume that the non-increasing rearrangement function $\omega^{*}$ satisfies
\begin{align}\label{eq1.4.1}
\omega^{*}(0)+\sum_{\ell\geq2}\frac{\omega^{*}(\ell)}{\ell\sqrt{\log\ell}}<\infty.
\end{align}
 Let $h^{p-2}\in A_{p/r}(h^2d\mu)$, then the following statements hold:

(i) Assume $s>n/r$, then $M_{m,L}^{\mathrm{dyad}}$ is bounded on $L^p(d\mu)$.

(ii) Assume $s>n/r+1/p$, then $M_{m,L}$ is bounded on $L^p(d\mu)$.

%(ii)Assume $s>n/2+1/p$ if $p\geq2$ or for $s>n/p+1/p'???$ if $1<p\leq2$, then $M_{m,L}$ is bounded on $L^p(d\mu)$.
\end{theorem}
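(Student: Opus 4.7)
The plan is to adapt the Grafakos-Honz\'ik-Seeger strategy to the abstract spectral-multiplier setting via the Doob transform, using as the main engine the $\sqrt{\log(1+N)}$-type maximal bound for finite families $\{m_i(L)\}_{1\le i\le N}$ announced in the abstract and established earlier in the paper, which I treat as an available ingredient. First I would pass to the measure space $(X,d,h^2\,d\mu)$ via the isometry $Uf := f/h$: conditions (H-1) and \eqref{LUGh} ensure that $U$ intertwines $e^{-tL}$ with a self-adjoint semigroup whose kernel satisfies two-sided Gaussian bounds on the doubling space $(X,d,h^2\,d\mu)$. On this transformed side the full weighted Calder\'on-Zygmund apparatus is available, and the assumption $h^{p-2}\in A_{p/r}(h^2\,d\mu)$ is the precise Muckenhoupt condition that converts $L^p(d\mu)$ bounds for $m(L)$ into weighted $L^{p/r}(h^{p-2};h^2\,d\mu)$ bounds for the transformed multiplier, keeping the vector-valued maximal and square-function theory in play.

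For part (i), I would decompose $\mathbb Z$ by level sets of $\omega$: set $\sigma_j := \omega^*(2^j)$ and let $E_j := \{k\in\mathbb Z : \sigma_{j+1}<\omega(k)\le \sigma_j\}$, so that $|E_j|\le 2^{j+1}$ and the family $\{\phi\,m(2^k\cdot)\}_{k\in E_j}$ carries the uniform Sobolev bound $\sigma_j$. Applied on each block, the paper's finite-$N$ maximal estimate (whose hypothesis $s>n/r$ is exactly the smoothness assumption of part (i)) yields
\[
\Big\|\sup_{k\in E_j}|m(2^kL)f|\Big\|_{L^p}\lesssim \sqrt{1+j}\,\sigma_j\,\|f\|_{L^p}.
\]
The rearrangement hypothesis \eqref{eq1.4.1} is tailored so that the resulting sum over $j$ converges, producing the $L^p$ bound for $M^{\mathrm{dyad}}_{m,L}$.

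For part (ii), I would upgrade to the continuous supremum by controlling the oscillation of $t\mapsto m(tL)f$ inside each dyadic window $[2^k,2^{k+1}]$. A fundamental-theorem-of-calculus bound gives
\[
\sup_{t\in[2^k,2^{k+1}]}|m(tL)f|\le |m(2^kL)f| + \Bigl(\int_{2^k}^{2^{k+1}}|sL\,m'(sL)f|^2\,\tfrac{ds}{s}\Bigr)^{1/2},
\]
so the error is a $g$-function with symbol $\lambda\mapsto\lambda m'(\lambda)$, whose Sobolev norms are controlled by those of $m$ at the cost of one derivative. The half-derivative loss incurred in trading $L^\infty_t$ for $L^2_t$ is exactly what accounts for the upgrade from $s>n/r$ in (i) to $s>n/r+1/p$ in (ii). The $g$-function is then handled by a vector-valued analogue of the finite-$N$ bound together with the Plancherel hypothesis \eqref{e1.4} on the Doob-transformed side, with the $A_{p/r}(h^2\,d\mu)$ weight absorbed by Step 1.

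The hardest point will be the sharp summation in part (i): making the $\sqrt{\log N}$ bound match the rearrangement condition \eqref{eq1.4.1}, rather than a stronger $\ell^2$-type condition on $\omega$, requires careful execution of the level-set decomposition together with uniform control of the $A_{p/r}(h^2\,d\mu)$ constants across the blocks $E_j$. A closely related difficulty in part (ii) is tracking the derivative loss in $m'$ precisely enough that the block-by-block $g$-function bounds still sum under \eqref{eq1.4.1}. These two points are the technical core of transposing the Grafakos-Honz\'ik-Seeger argument from $\mathbb R^n$ to the non-Euclidean spectral setting of an abstract self-adjoint operator with a Doob-admissible harmonic function.
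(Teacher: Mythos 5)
Your high-level plan (Doob transform, level-set decomposition of $\omega$, reduction to the finite-$N$ theorem) is the right one, but the execution has two gaps that would each break the argument for part (i). First, the decomposition must split the \emph{multiplier} $m$, not the dilation parameter: the paper sets $\widetilde m_j(t)=\sum_{k\in F_j}m(t)\phi(2^{-k}t)$, and the dyadic maximal function of each piece is still $\sup_{k\in\ZZ}|\widetilde m_j(2^kL)f|$ — a supremum over \emph{all} integers, to which the finite-$N$ theorem does not apply directly. Your expression $\sup_{k\in E_j}|m(2^kL)f|$ does not arise from any valid decomposition of $M^{\mathrm{dyad}}_{m,L}$, and the multipliers $m(2^k\cdot)$ for $k\in E_j$ do not carry the uniform bound $\sigma_j$ (the hypothesis controls $\|\phi\, m(2^k\cdot)\|_{W^q_s}$, the frequency-localized piece of $m$ at scale $2^k$, not the dilated multiplier). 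The paper bridges this via the tiling Lemma~\ref{Lemma6} plus Rademacher randomization: $\ZZ$ is covered by $O(4^{N+1})$ shifts of a set $B=\{b_i\}$ for which the translates $b_i+F_j$ are disjoint, the sup over $i$ is absorbed into an $\ell^2$ sum which becomes a single randomized multiplier $\sum_i r_i(s)\widetilde m_j(2^{b_i+\alpha}\cdot)$ with Sobolev norm $\le C\omega^*(2^{2^{j-1}})$ precisely because of the disjointness, and only then is Theorem~\ref{Theorem1} applied to the $O(4^{N+1})$ shifts $\alpha$. Second, your thresholds are wrong: with $|E_j|\le 2^{j+1}$ the per-block bound is $\sqrt{j}\,\omega^*(2^j)$, and $\sum_j\sqrt{j}\,\omega^*(2^j)<\infty$ is strictly stronger than \eqref{eq1.4.1}, which is equivalent to $\sum_j\omega^*(2^j)/\sqrt{j}<\infty$ (take $\omega^*(2^j)=j^{-3/2}$ for a counterexample). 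The paper uses doubly exponential blocks, $\mathrm{Car}(F_j)\le 2^{2^j}$, so the loss is $2^{j/2}$ against $\omega^*(2^{2^{j-1}})$, and $\sum_j 2^{j/2}\omega^*(2^{2^{j-1}})$ matches \eqref{eq1.4.1} exactly since $\int_{2^{2^{j-2}}}^{2^{2^{j-1}}}(t\sqrt{\ln t})^{-1}dt\approx 2^{j/2}$.

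For part (ii) your accounting of the loss is also off: an $L^2_t$ Sobolev embedding in $t$ would cost $1/2$ a derivative, not $1/p$. The paper gets $s>n/r+1/p$ by combining the $L^p_t$ version of the sup-on-$[1,2]$ trick (estimate \eqref{IBJ}, with exponents $1/(pp')$ and $1/p^2$) with a \emph{second} decomposition of each $\phi\, m(2^k\cdot)$ according to the dyadic size $2^\ell$ of the support of its Fourier transform: the $t$-derivative costs a factor $2^\ell$, which is beaten by the gain $2^{-\ell/p-\varepsilon\ell}$ obtained by lowering the Sobolev index from $s$ to $s-1/p-\varepsilon$ on a piece spatially localized at scale $2^\ell$. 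Without that spatial decomposition the derivative term is not summable, so this step is an essential missing ingredient rather than a routine detail.
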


%\begin{theorem}\label{Theorem7}
%   Suppose that   $L$ is a self-adjoint operator that satisfies  Gaussian estimate  \eqref{ge} and Plancherel condition \eqref{e1.4} for some $q\in[2,\infty]$. Assume there exists an $L$-harmonic $h$ such that conditions  \rm{(H-1)} and \eqref{LUGh} hold. Let $1<p<2$,  $\frac1r=\frac12+(\frac2q-1)\frac{1}{p'}$ and
%   $s>n/2+1/\sigma$, $\frac1\sigma=\frac{1}{q'}+(\frac2q-1)\frac{1}{p'}$.
%    %$s>\frac{n}2+\frac1{r}$, where $\frac1{r}=\frac1q+\frac1p(1-\frac2{q})$.
%   Suppose    the bounded Borel function $m$ satisfies
%
%\begin{align}\label{tm7c1}
%\|\phi m(2^k\cdot)\|_{W^{r}_{s}}\leq \omega(k).
%\end{align}
%Assume that the non-increasing rearrangement function $\omega^{*}$ satisfies
%\begin{align*}
%\omega^{*}(0)+\sum_{\ell\geq2}\frac{\omega^{*}(\ell)}{\ell\sqrt{\log\ell}}<\infty.
%\end{align*}
%Then $M_{m,L}$ is bounded on $L^p(d\mu)$.
%
%\end{theorem}

Similar as the classical case, by Calder\'on-Zygmund theory, we can show that if $M_{m,L}$ is a priori bounded on $L^2(X)$ and
if $\sup_{t>0}\|\phi\, m( t\cdot)\|_{W^{2}_s}<\infty$ holds for some $s>n/2+1/q'$, then $M_{m,L}$ is weak-type $(1,1)$ and thus bounded on $L^p$ for all  $1<p\leq 2$. See Proposition~\ref{H1toL1} in Section \ref{sec:6} for details. Then by interpolation, we have the following result which states that $h^{p-2}$ belonging to some weight class is not necessary for $1<p\leq 2$.

\begin{theorem}\label{Theorem8}
   Suppose that   $L$ is a self-adjoint operator that satisfies  Gaussian estimate  \eqref{ge} and Plancherel condition \eqref{e1.4} for some $q\in[2,\infty]$. Assume there exists an $L$-harmonic $h$ such that conditions  \rm{(H-1)} and \eqref{LUGh} hold. Let $1<p\leq2$ and
   $s>n/2+1/q'$.
    %$s>\frac{n}2+\frac1{r}$, where $\frac1{r}=\frac1q+\frac1p(1-\frac2{q})$.
   Suppose    the bounded Borel function $m$ satisfies
\begin{align}\label{tm7c1}
\left\|\phi m(2^k\cdot)\right\|_{W^{q}_{s}}\leq \omega(k).
\end{align}
Assume that the non-increasing rearrangement function $\omega^{*}$ satisfies
\begin{align*}
\omega^{*}(0)+\sum_{\ell\geq2}\frac{\omega^{*}(\ell)}{\ell\sqrt{\log\ell}}<\infty.
\end{align*}
Then $M_{m,L}$ is bounded on $L^p(d\mu)$.

\end{theorem}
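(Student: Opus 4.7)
The plan is to obtain Theorem~\ref{Theorem8} by first deriving an $L^2$-bound for $M_{m,L}$ from Theorem~\ref{Theorem3}(ii) applied at the endpoint $p=2$, and then invoking Proposition~\ref{H1toL1} (the Calder\'on--Zygmund-type result announced just above and proved in Section~\ref{sec:6}), whose conclusion already delivers $L^p$-boundedness for every $1<p\leq 2$. Thus no explicit interpolation needs to be performed once these two ingredients are in place.

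For the $L^2$-bound, note that $q\geq 2$ gives $1/q'\geq 1/2$, so $s>n/2+1/q'$ yields $n/(s-1/2)<2$. Pick $r\in[1,2)$ with $r>n/(s-1/2)$ (take $r=1$ if $n/(s-1/2)\leq 1$). Then $s>n/r+1/2=n/r+1/p$ at $p=2$, which is precisely the smoothness threshold of Theorem~\ref{Theorem3}(ii). At $p=2$ the weight hypothesis becomes $h^{p-2}=h^0\equiv 1$, and the constant $1$ lies in $A_1\subset A_{2/r}(h^2d\mu)$. Every remaining hypothesis of Theorem~\ref{Theorem3} (Gaussian bound, Plancherel condition at the same $q$, (H-1), \eqref{LUGh}, the estimate \eqref{tm7c1}, and the $\omega^*$-summability) is inherited directly from the hypotheses of Theorem~\ref{Theorem8}. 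Therefore $M_{m,L}$ is bounded on $L^2(d\mu)$.

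To invoke Proposition~\ref{H1toL1} we must verify $\sup_{t>0}\|\phi m(t\cdot)\|_{W^2_s}<\infty$ for the same $s>n/2+1/q'$. The summability condition forces $\omega^*(0)<\infty$, whence $\sup_{k\in\ZZ}\|\phi m(2^k\cdot)\|_{W^q_s}\leq\omega^*(0)<\infty$. For arbitrary $t>0$, writing $t=2^k\tau$ with $\tau\in[1,2)$ and absorbing the bounded dilation $\tau$ into the fixed bump $\phi$ (a change of variables on a compact set) gives $\sup_{t>0}\|\phi m(t\cdot)\|_{W^q_s}<\infty$. Since $\phi m(t\cdot)$ is supported in the fixed compact annulus $\{1/2\leq|\xi|\leq 2\}$ and $q\geq 2$, a localized H\"older embedding---for fractional $s$ one first localizes $(I-\Delta)^{s/2}(\phi m(t\cdot))$ with a slightly larger bump and uses the rapid decay of the Bessel kernel---yields $\|\phi m(t\cdot)\|_{W^2_s}\leq C\|\phi m(t\cdot)\|_{W^q_s}$ uniformly in $t$. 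Both hypotheses of Proposition~\ref{H1toL1} now hold, and the proposition gives $M_{m,L}$ bounded on $L^p(d\mu)$ for every $1<p\leq 2$. The only genuinely non-cosmetic step is the transfer from $W^q_s$ to $W^2_s$, which is standard for compactly supported symbols; the rest is bookkeeping, the key points being that choosing $r$ close to $2$ makes Theorem~\ref{Theorem3} applicable at $p=2$, and that $h^{p-2}\equiv 1$ there trivializes the $A_{p/r}$-weight condition.
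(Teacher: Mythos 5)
Your proposal is correct and follows essentially the same route as the paper: an $L^2$ bound via Theorem~\ref{Theorem3} (the paper uses Corollary~\ref{Theorem6}, whose proof makes exactly your choice of $r$ close to $2$ so that $s>n/r+1/2$ and $h^{0}\equiv 1\in A_{2/r}(h^2d\mu)$), followed by Proposition~\ref{H1toL1} and interpolation; your verification of the transfer from $W^q_s$ to $W^2_s$ for the compactly supported pieces $\phi m(2^k\cdot)$ is a detail the paper leaves implicit. One small correction: Proposition~\ref{H1toL1} as stated concludes only the weak-type $(1,1)$ bound, not $L^p$-boundedness for $1<p\leq 2$, so you do still need one application of Marcinkiewicz interpolation between that weak $(1,1)$ estimate and the $L^2$ bound you established --- both endpoints are in hand, so this is a one-line fix.
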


%
%\begin{remark}
%Theorem~\ref{Theorem7} is better than Theorem~\ref{Theorem8}. ( Sobolev embedding inequality) But Theorem~\ref{Theorem8} is more succinct.
%\end{remark}

\medskip
The proof of Theorems~\ref{Theorem3} and  ~\ref{Theorem8} is based on  some estimates for the  maximal function $\sup_{1\leq i\leq N}|m_i(L)(f)|$ for given multipliers $m_i,i=1,\cdots, N,$ with uniform estimates, which   can be stated in the following way.

\begin{theorem}\label{Theorem1}
 Suppose that   $L$ is a self-adjoint operator that satisfies   Gaussian estimate  \eqref{ge} and Plancherel condition \eqref{e1.4} for some $q\in[2,\infty]$. Assume there exists an $L$-harmonic $h$ such that conditions  \rm{(H-1)} and \eqref{LUGh} hold.
Let $1\leq r<2$ and $N$ functions  $m_i$, $i=1, \cdots, N ,$ satisfy
  $$
 \sup_{t>0}  \big\|\phi m_i(t\cdot)\big\|_{W^q_{s}}+|m_i(0)| \leq B<\infty,\ \ \mathrm{for}\ i=1,2,\cdots, N
  $$
  and  some $s>n/r$.
 Then $\sup_{1\leq i\leq N}|m_i(L)(f)|$ is bounded on $L^p(d\mu)$ for all $p$ and $h$ satisfying $p\in(r,\infty)$ and $h^{p-2}\in A_{p/r}(h^2d\mu)$. In addition,
  \begin{align}\label{a1}
    \Big\|\sup_{1\leq i\leq N} \left|m_i(L)(f)\right | \Big \|_{L^p(d\mu)}\leq C_{p,r}B\sqrt{\log(1+N)}\|f\|_{L^p(d\mu)}.
  \end{align}
\end{theorem}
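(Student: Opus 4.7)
\medskip

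\noindent\textbf{Proof plan for Theorem~\ref{Theorem1}.}

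The plan is to follow the Grafakos--Honzík--Seeger strategy from \cite{GHS2006}, but to first transfer the whole problem, via the Doob transform, to a metric measure space on which the relevant semigroup carries two-sided Gaussian bounds, so that the standard Littlewood--Paley and weighted Calder\'on--Zygmund machinery is available. Concretely, set $\tilde L g := h^{-1} L(hg)$, regarded as a self-adjoint operator on $L^2(X, h^2 d\mu)$. By (H-1) and (H-2) the semigroup $e^{-t\tilde L}$ has kernel $T_t(x,y)/(h(x)h(y))$ which, with respect to the measure $\mu_{h^2} = h^2 d\mu$, satisfies \eqref{LUGh}. Since $m(L)f = h\cdot m(\tilde L)(f/h)$, the claim \eqref{a1} is equivalent to the weighted bound
\begin{equation*}
\Big\|\sup_{1\leq i\leq N}|m_i(\tilde L)g|\Big\|_{L^p(W\,d\mu_{h^2})} \leq C_{p,r}B\sqrt{\log(1+N)}\,\|g\|_{L^p(W\,d\mu_{h^2})},
\end{equation*}
with $W := h^{p-2}$, and by hypothesis $W\in A_{p/r}(d\mu_{h^2})$.

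Next I would perform a dyadic spectral decomposition. Fix $\psi\in C_c^\infty((1/2,2))$ with $\sum_{k\in\ZZ}\psi(2^{-k}\lambda)=1$ for $\lambda>0$, and write $m_i(\tilde L) = m_i(0)E_{\{0\}} + \sum_{k\in\ZZ} T_{i,k}$ with $T_{i,k}:=m_i(\tilde L)\psi(2^{-k}\tilde L)$. Combining \eqref{e1.4} (which transfers to $\tilde L$ because the two-sided Gaussian bound \eqref{LUGh} implies the Plancherel condition on $(X,d,\mu_{h^2})$) with the uniform bound $\|\phi m_i(t\cdot)\|_{W^q_s}\leq B$ and $s>n/r$, one obtains kernel estimates for $T_{i,k}$ of the form
\begin{equation*}
|K_{T_{i,k}}(x,y)| \leq \frac{CB}{\mu_{h^2}(B(x,2^{-k/2}))}\bigl(1+2^{k/2}d(x,y)\bigr)^{-s}, \qquad s>n/r,
\end{equation*}
with constants uniform in $i,k,N$. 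In particular, by the Hörmander-type theorem (Proposition~\ref{thmbydo}) each $T_{i,k}$ is uniformly bounded on $L^p(W\,d\mu_{h^2})$, and the vector-valued square function $\bigl(\sum_k |\psi(2^{-k}\tilde L)g|^2\bigr)^{1/2}$ is bounded on the same space by the weighted Littlewood--Paley theorem for $A_{p/r}$ weights.

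The heart of the argument is the following frequency-localized maximal estimate: for every fixed $k$,
\begin{equation*}
\Big\|\sup_{1\leq i\leq N}|T_{i,k}g|\Big\|_{L^p(W\,d\mu_{h^2})} \leq C_{p,r}B\sqrt{\log(1+N)}\,\|\psi(2^{-k}\tilde L)g\|_{L^p(W\,d\mu_{h^2})}.
\end{equation*}
To establish this, I would combine the uniform $L^p(W)$ bound with a subgaussian tail bound obtained from the above kernel estimate (after extrapolating from the weighted $L^r$ estimate, which is where the condition $W\in A_{p/r}(d\mu_{h^2})$ together with $r<p$ is used to run Rubio de Francia extrapolation). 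Since the tails are exponential-square rather than polynomial, a union bound over the $N$ indices costs only $\sqrt{\log(1+N)}$ rather than $N^{1/p}$; this is the mechanism behind the optimal growth rate, and mirrors the Khintchine/Chernoff-type argument of \cite{GHS2006}. Once this localized estimate is in hand, I would insert it into
\begin{equation*}
\Big\|\sup_i|m_i(\tilde L)g|\Big\|_{L^p(W)} \leq |m_i(0)|\,\|g\|_{L^p(W)} + \Big\|\Big(\sum_{k}\sup_i|T_{i,k}g|^2\Big)^{1/2}\Big\|_{L^p(W)}
\end{equation*}
and invoke the weighted Littlewood--Paley inequality to finish.

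The main obstacle is the frequency-localized maximal estimate in the third step. The difficulty is twofold: first, one must make the Khintchine/exponential-square argument work in the abstract setting where Fourier transform is unavailable, relying instead on the two-sided Gaussian bounds, Plancherel condition \eqref{e1.4}, and Davies--Gaffney-type off-diagonal estimates for $\psi(2^{-k}\tilde L)$; second, one must track that the constants are uniform in $i$, $k$ and $N$ while the weight $W=h^{p-2}$ is present, which forces the use of weighted extrapolation from an unweighted $L^r$-endpoint rather than a direct weighted $L^p$ computation.
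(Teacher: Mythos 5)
Your opening move (Doob transform to $(X,d,h^2d\mu)$ with weight $W=h^{p-2}\in A_{p/r}$) matches the paper, but the core of your argument has two genuine gaps. First, the step that is supposed to produce the $\sqrt{\log(1+N)}$ factor is not a real argument: a kernel bound of the form $|K_{T_{i,k}}(x,y)|\lesssim B\,\mu_{h^2}(B(x,2^{-k/2}))^{-1}(1+2^{k/2}d(x,y))^{-s}$ gives only polynomial off-diagonal decay and hence maximal-function/$L^p$ bounds; it does not yield any subgaussian distributional estimate, so the ``union bound costs only $\sqrt{\log(1+N)}$'' claim has no support. The actual mechanism (both in \cite{GHS2006} and in this paper) is the Chang--Wilson--Wolff exponential-square good-$\lambda$ inequality for the \emph{dyadic martingale} square function $\mathbb{S}^\upsilon$ (Lemmas~\ref{Lemma5} and \ref{Lemma8}), combined with the crucial pointwise domination
\begin{equation*}
\mathbb{S}^\upsilon\big(h^{-1}m_i(L)f\big)(x)\leq CB\Big(\sum_{j}\big|\mathfrak{M}^\upsilon_r\big(h^{-1}U^{2/r}_{j,L}f\big)(x)\big|^2\Big)^{1/2},
\end{equation*}
whose right-hand side is \emph{independent of $i$} (Proposition~\ref{Lemma2}, proved via Lemmas~\ref{Lemma3} and \ref{Lemma4} using the conservation property and H\"older continuity of the Doob-transformed heat kernel). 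It is exactly this $i$-independence that lets one take a union over $i=1,\dots,N$ of the good sets at cost $N\exp(-c_n\,\varepsilon_N^{-2})\leq 1$ with $\varepsilon_N\sim(\log(1+N))^{-1/2}$, while the bad set is handled once by Chebyshev, the Fefferman--Stein inequality and the weighted Littlewood--Paley bound. Your proposal never identifies this martingale structure or the $i$-independent majorant, so the heart of the proof is missing.

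Second, even granting your frequency-localized maximal estimate for each $T_{i,k}=m_i(\tilde L)\psi(2^{-k}\tilde L)$, your reassembly fails: $\sup_i\big|\sum_k T_{i,k}g\big|\leq\sum_k\sup_i|T_{i,k}g|$ is an $\ell^1$-sum in $k$, and it cannot be dominated by $\big(\sum_k\sup_i|T_{i,k}g|^2\big)^{1/2}$ (that inequality points the wrong way), nor does the $\ell^1$-sum over all $k\in\ZZ$ converge from the localized bounds alone. The paper avoids a frequency-by-frequency maximal estimate entirely by running a single good-$\lambda$ argument for the full $h^{-1}m_i(L)f$; the dyadic decomposition of the multiplier only enters later, in the proof of Theorem~\ref{Theorem3}. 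A smaller issue: the two-sided Gaussian bound for $\tilde L$ implies the Plancherel condition only with $q=\infty$, so your transfer of \eqref{e1.4} for general $q$ needs justification; in the paper all kernel estimates involving \eqref{e1.4} are carried out for $L$ itself, with the Doob transform used only for the cancellation and regularity needed by the martingale argument.
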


\smallskip

%\begin{theorem}\label{Theorem3}
% Suppose that   $L$ is a self-adjoint operator that satisfies  Gaussian estimate  \eqref{ge} and Plancherel condition \eqref{e1.4} for some $q\in[2,\infty]$. Assume there exists an $L$-harmonic $h$ such that conditions  \rm{(H-1)} and \rm{(H-2)} hold.
%Suppose  $1<p<\infty$, $r=\min\{p,2\}$.  Let $h^{p-2}\in A_{p/r}(h^2d\mu)$,
%
%(i)Assume the bounded Borel function $m$ satisfies
%\begin{align}\label{eq1.4.2}\|\phi m(2^k\cdot)\|_{W^q_{s}}\leq \omega(k).\end{align}
%for $s>n/r$.  Assume that the non-increasing rearrangement function $\omega^{*}$ satisfies
%\begin{align}\label{eq1.4.1}
%\omega^{*}(0)+\sum_{\ell\geq2}\frac{\omega^{*}(\ell)}{\ell\sqrt{\log\ell}}<\infty.
%\end{align}
%Then $M_{m,L}^{\mathrm{dyad}}$ is bounded on $L^p(d\mu)$.
%
%(ii)Assume $s>n/2+1/p$ if $p\geq2$ or for $s>n/p+1/p$ if $1<p\leq2$, then $M_{m,L}$ is bounded on $L^p(d\mu)$.
%
%\end{theorem}

%In \cite{GHS2006}, Grafakos-Honz\'{i}k-Segger showed that
%
%\begin{theorem}\label{ThmbyGHS}
%Suppose  $1< p<\infty$ and $q=\min\{p,2\}$. Suppose that
%$$\|\phi m(2^k\cdot)\|_{W^q_{s}}\leq \omega(k).$$
%Assume that the non-increasing rearrangement function $\omega^{*}$ satisfies
%$$\omega^{*}(0)+\sum_{\ell\geq1}\frac{\omega^{*}(\ell)}{\ell\sqrt{\log\ell}}<\infty.$$
%
%(i)\ If $s>d/q$, then $M_m^{d}$ is bounded on $L^p(\RR^d)$.
%
%(ii)\ If   $s>d/p+1/p'$ for $1<p\leq2$ or $s>d/2+1/p$ for $2\leq p<\infty$, then $M_m$ is bounded on $L^p(\RR^d)$.
%
%\end{theorem}

%We remark that the result of Theorem~\ref{pro1.3} require less derivatives, while our result Theorem~\ref{Theorem3} improves the summability.

We mention that the proof of Theorem~\ref{Theorem1}
is inspired by the result of
Grafakos-Honz\'{i}k-Segger \cite{GHS2006}, which was shown  by making use of the ${\rm exp}(L^2)$ estimate by Chang-Wilson-Wolff \cite{CWW1985} for functions with bounded Littlewood-Paley square function.
However, we can not use the technique of proof of Chang-Wilson-Wolff (\cite[Theorem 3.1]{CWW1985})  in this step. The main reason is that with application to the
Schr\"odinger operator in mind, we do not  assume that the semigroup
$\{e^{-tL}\}_{t>0}$  satisfies the  H\"older regularity and the  conservation property  such that
$$
e^{-tL}1 = 1.
$$
To   overcome the obstacles,  our approach is to apply the Doob transform
to get a new semigroup  $\mathcal{T}_t$, and its generator $\mathcal{L}$ is also
self-adjoint operator satisfying  the  H\"older regularity of the semigroup   kernel  and the  conservation property.

This paper is organized as follows. In Section \ref{sec:2}, we recall some preliminary known results on the   Littlewood-Paley function associated with operator $L$, Muckenhoupt weights and  Doob transform. In Section \ref{sec:3}, we prove Theorem~\ref{Theorem2}. In Section \ref{sec:4}, we will prove a weighted good-$\lambda$ estimate of the square function for the dyadic martingale and give some estimates of  the Littlewood-Paley operators, which will play an important role in the proof of Theorem~\ref{Theorem1}.    The proof of Theorem \ref{Theorem1} will be given in   Section \ref{sec:5}. The proof of Theorems~\ref{Theorem3} and \ref{Theorem8} will be given in Section \ref{sec:6}.  In Section \ref{sec:7}, we will give some examples  to obtain some new applications of Theorems~\ref{Theorem3} and \ref{Theorem8}.
%For the completeness of this paper, we also  In the appendix, we will

\subsection{Notations}
Throughout this paper,
unless we mention the contrary, $(X,d,\mu)$ is a metric measure  space, that is, $\mu$
is a Borel measure with respect to the topology defined by the metric $d$.
Given a Borel measure $\mu$ and $1\leq p\leq \infty$, we denote by $L^p(d\mu)$ the $L^p$ space equipped with the measure $\mu$.
We write $\mathcal{S}$ to mean classes of the Schwartz functions.
We denote by
$B(x,r)=\{y\in X,\, {d}(x,y)< r\}$  the open ball
with centre $x\in X$ and radius $r>0$. We   often just use $B$ instead of $B(x, r)$.
Given $\lambda>0$, we write $\lambda B$ for the $\lambda$-dilated ball
which is the ball with the same centre as $B$ and radius $\lambda r$.
Given a function $f\in\mathcal{S}$, we will use $\mathcal{F}f$  or  $\hat{f}$ to represent the Fourier transform of $f$; use $\mathcal{F}^{-1}f$  or  $\check{f}$ to represent the Fourier inverse transform of $f$.
%will denote the cube with the same center but with side-length cl(I)

\medskip
\section{Preliminaries}\label{sec:2}
\setcounter{equation}{0}

In this section, we will recall some preliminary known results, which will be used to prove Theorems~\ref{Theorem1} and \ref{Theorem3}.

Let $Q^\alpha_k$ denote the cube with side-length $\delta^k$, where $0<\delta<1$.
\begin{lemma}\label{le2.1}
	Let $(X, d, \mu) $ be a space of homogeneous type.
	  There exists a system of dyadic cubes $\{Q^\alpha_k\}_{k,\alpha}$ for constant $\delta$. The dyadic cubes $\{Q^\alpha_k\}_{k,\alpha}$ satisfy the following properties:
	\begin{enumerate}
		\item  for each $k\in\ZZ$, $X=\cup_{\alpha}Q^\alpha_k$ and $\{Q^\alpha_k\}_{k,\alpha}$ are disjoint for different $\alpha$.
		\item if $\ell\leq k$, for any $\alpha,\beta$, either $Q^\alpha_k\subseteq Q^\beta_\ell$ or $Q^\alpha_k\cap Q^\beta_\ell=\varnothing$.
		\item For each $Q^\alpha_\ell$ and $k\leq \ell$, there  exists a unique  $Q^\beta_k$ so that $Q^\alpha_\ell\subseteq Q^\beta_k$.
		\item there exists a positive integer $N(\delta,n)$ so that for each $(\ell,\alpha)$,
		$$
		1\leq \#\left\{\beta:\    Q^\beta_{\ell+1}\subseteq Q^\alpha_\ell\right\}\leq N(\delta,n)\ \  \mathrm{and}\ \
		Q^\alpha_\ell=\bigcup_{\beta:Q^\beta_{\ell+1}\subseteq Q^\alpha_\ell}Q^\beta_{\ell+1}.
		$$
		\item for each $(\ell,\alpha)$, there exists a $x^\alpha_\ell\in X$ so that
		$$B(x^\alpha_\ell,\delta^\ell)\subseteq Q^\alpha_\ell\subseteq B(x^\alpha_\ell,C\delta^\ell).$$
		\item for any $0<t\leq1$ and $(\alpha,k)$, there exists  constants $\rho>0$ and $C>1$ so that
		\begin{align*}
			\mu\left(\left\{x\in Q_k^\alpha:\   d(x, (Q_k^\alpha)^c)\leq t \delta^k\right\}\right)\leq Ct^{\,\rho} \mu(Q_k^\alpha).
		\end{align*}
	\end{enumerate}
\end{lemma}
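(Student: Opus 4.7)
The plan is to follow the classical construction of dyadic cubes on a space of homogeneous type due to Christ, with the refinements by Hyt\"onen--Kairema that yield the partition property (rather than merely ``almost partition''). Throughout one fixes a sufficiently small parameter $\delta\in(0,1)$ (so that geometric series converge in the right places), and the construction proceeds scale by scale.

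First, at each scale $k\in\ZZ$ I would select a maximal $\delta^k$-separated set of points $\{x_k^\alpha\}_\alpha\subset X$; existence is by Zorn's lemma (or a greedy algorithm), and maximality automatically gives the covering $X=\bigcup_\alpha B(x_k^\alpha,\delta^k)$ together with pairwise disjointness of the smaller balls $B(x_k^\alpha,\delta^k/2)$. The doubling condition \eqref{eq2.2} implies that the number of ``children'' points $x_{k+1}^\beta$ contained in a ball of radius a fixed multiple of $\delta^k$ around $x_k^\alpha$ is bounded by a constant $N(\delta,n)$, which will give (4). Next, for each point $x_{k+1}^\beta$ one assigns a unique ``parent'' $x_k^{\alpha(\beta)}$ by picking the nearest center at scale $\delta^k$ (breaking ties in any fixed way). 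Iterating this nearest-neighbor assignment yields a tree structure on the centers, and hence on the indices $(k,\alpha)$, which already encodes the nesting required in (2) and (3).

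With the tree in hand, one defines preliminary ``open cubes'' $\widetilde Q_k^\alpha$ as unions of descendants, and then the actual $Q_k^\alpha$ by a Voronoi/selection procedure ensuring that for a fixed scale the $Q_k^\alpha$ partition $X$ (property (1)). The inner/outer ball property (5) with $B(x_k^\alpha,\delta^k)\subseteq Q_k^\alpha\subseteq B(x_k^\alpha,C\delta^k)$ follows because each $Q_k^\alpha$ is squeezed between the maximal-separation ball and the ``children-exhausting'' ball, with $C$ depending only on the doubling constant and the choice of $\delta$. Properties (1)--(5) are essentially book-keeping once the tree is set up; I expect them to be routine given doubling.

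The main obstacle is the small-boundary property (6). The difficulty is that the cubes $Q_k^\alpha$ have no intrinsic smooth boundary, and one must show that the measure of points within distance $t\delta^k$ of the complement is controlled by $t^\rho \mu(Q_k^\alpha)$ for some $\rho>0$. The plan is to run Christ's iterative argument: let $A_j$ be the set of points in $Q_k^\alpha$ within distance $\delta^{k+j}$ of the complement. A geometric analysis using (5) and doubling at scale $\delta^{k+j}$ shows that at each step a fixed fraction (say $1-\eta$ for some $\eta\in(0,1)$) of the measure of the ``$\delta^{k+j}$-boundary layer'' is peeled off into the interior, giving $\mu(A_{j+1})\leq \eta\,\mu(A_j)$, and hence $\mu(A_j)\leq C\eta^j\mu(Q_k^\alpha)$. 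Choosing $j$ so that $\delta^j\approx t$ and setting $\rho=\log\eta/\log\delta>0$ gives the claim. The delicate point is that this geometric decay uses both the uniform lower bound on $\mu(B(x_k^\alpha,\delta^k))$ coming from doubling and the bounded ``coordination number'' between scales from (4), so one has to be careful to verify that the constants are independent of $(k,\alpha)$. Once this is established, properties (1)--(6) together give the full statement, and I would simply cite Christ and Hyt\"onen--Kairema for the detailed verifications rather than reproducing them.
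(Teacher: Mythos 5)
Your proposal matches the paper's treatment: the paper proves this lemma purely by citation, referring properties (1)--(5) to Hyt\"onen--Kairema \cite[Theorem~2.2]{HK2012} and the small-boundary property (6) to Christ \cite[Chapter~VI, Theorem~14]{C1990}, which are exactly the two sources whose constructions you sketch and then propose to cite. Your outline of the net/tree construction and the iterative peeling argument for (6) is a faithful summary of those references, so there is nothing to add.
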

\begin{proof}
	The proof of properties~(1)-(5) can be  referred to  \cite[Theorem~2.2]{HK2012}.
%which gave a detail explanation of the construction of the dyadic cubes.
The proof of (6) can be referred to \cite[Chapter~VI, Theorem~14]{C1990}.
	
\end{proof}

\subsection{The Littlewood-Paley function}\label{subsec:2.1}
Assume that $\psi\in C_c^\infty(\RR)$ is an even function with $\int \psi(t) dt=0$, $\psi\neq 0$ and supp\,$\psi\in[-1,1]$. Let $\Psi$ denote  the Fourier transform of $\psi$.  Fix $\lambda>1$. We consider the following Littlewood-Paley $G$-function associated with $L$,
$$G_L^\lambda(f)(x)
%:=\left(\int_{0}^{\infty} |U_{t,L}^\lambda(f)(x)|^2  \right)^{1/2}
:=\bigg(\int_{0}^{\infty}\int_X\left(\frac{t}{t+d(x,y)}\right)^{\lambda n}  \left|\Psi(t\sqrt{L})f(y)\right|^2
\frac{d\mu(y)}{\mu(B(x,t))}\frac{dt}{t}\bigg)^{\frac12}.
$$
For every   $j\in {\mathbb Z}$, we define
\begin{align}\label{ujl}
	U_{j,L}^\lambda(f)(x):=\bigg(\int_{2^{-j-1}}^{2^{-j}}\int_X\left(\frac{t}{t+d(x,y)}\right)^{\lambda n}  \left|\Psi(t\sqrt{L})f(y)\right|^2\frac{d\mu(y)}{\mu(B(x,t))}\frac{dt}{t} \bigg)^{\frac12},
\end{align}
and so
$$
\bigg(\sum_{j\in\ZZ} |U_{j,L}^\lambda(f)(x)|^2\bigg)^{1/2}=G_L^\lambda(f)(x).
$$
From \cite[Proposition~3.3]{GY2012}, we have
\begin{lemma}
 Let $L$ be a non-negative self-adjoint operator such that the corresponding
heat kernels satisfy Gaussian bound  \eqref{ge}.
For any $f\in L^p(d\mu)$,  there exists a constant $C$ such that
	\begin{align}\label{GLI}
		\|G_{L}^\lambda(f)\|_{L^p(d\mu)}\leq C\|f\|_{L^p(d\mu)}.
	\end{align}
	provided either $1<p<2$ and $\lambda>2/p$, or $2\leq  p<\infty$ and $\lambda>1$.
\end{lemma}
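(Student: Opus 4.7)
The plan is to reduce $G_L^\lambda$ to a weighted sum of conical square functions with varying aperture and then control each piece by standard $L^p$ theory. First I would split the $y$-integral into dyadic annuli $A_0(x,t)=B(x,t)$ and $A_k(x,t)=\{y: 2^{k-1}t\le d(x,y)<2^k t\}$ for $k\ge 1$. On $A_k$ the tangential factor is bounded by $2^{-k\lambda n}$, and doubling \eqref{eq2.2} gives $\mu(B(x,t))^{-1}\le C\,2^{kn}\mu(B(x,2^k t))^{-1}$. These two estimates combine to yield
\begin{equation*}
G_L^\lambda(f)(x)\;\lesssim\; \sum_{k\ge 0} 2^{-k(\lambda-1)n/2}\, S_k(f)(x), \qquad S_k(f)(x):=\bigg(\int_0^\infty\!\!\int_{d(x,y)<2^k t}|\Psi(t\sqrt L)f(y)|^2\frac{d\mu(y)}{\mu(B(x,2^k t))}\frac{dt}{t}\bigg)^{1/2},
\end{equation*}
i.e.\ a superposition of the conical square functions with aperture $2^k$.

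For $p=2$ and $\lambda>1$ I would use Fubini directly on the original definition of $G_L^\lambda$: for fixed $y,t$ the doubling condition yields $\int_X(t/(t+d(x,y)))^{\lambda n}\,d\mu(x)/\mu(B(x,t))\le C(\lambda)$, after which the spectral formula $\int_0^\infty\|\Psi(t\sqrt L)f\|_2^2\,dt/t\lesssim\|f\|_2^2$ (which follows from $\int_0^\infty |\Psi(t\xi)|^2\,dt/t<\infty$ and the spectral theorem) closes the argument.

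For $p\ge 2$ I would combine the $L^2$ bound with a change-of-aperture inequality for the conical square function in the homogeneous-type setting, namely $\|S_k f\|_p\le C 2^{kn(1/2-1/p)}\|S_0 f\|_p$ (the exponent comes from interpolating the trivial $L^\infty$-style bound with the $L^2$-isometric bound), and then note that $\|S_0 f\|_p\lesssim\|f\|_p$ is the standard vertical square-function estimate. The resulting series converges as soon as $\lambda>1$. For $1<p<2$ I would instead argue by weighted $L^2$ theory or by a vector-valued Calder\'on--Zygmund decomposition applied to $t\mapsto\Psi(t\sqrt L)f$, using the Gaussian off-diagonal decay from \eqref{ge} to obtain a weak-type $(1,1)$ endpoint for the truncated operator; interpolation with the $L^2$ bound then produces the range $\lambda>2/p$.

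The main obstacle is the range $1<p<2$: one has to track how the tangential weight $(t/(t+d(x,y)))^{\lambda n}$ interacts with both the Gaussian kernel bound and the Hardy--Littlewood maximal function in the Calder\'on--Zygmund/weak-$(1,1)$ step, and it is precisely this interaction that forces the sharp cutoff $\lambda>2/p$ rather than simply $\lambda>1$. The $p\ge 2$ regime and the $L^2$ step are essentially routine once the decomposition is in place; the delicate work is confined to the endpoint $p=1$ interpolation input.
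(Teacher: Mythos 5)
The paper does not prove this lemma at all: it is imported verbatim from Gong--Yan \cite[Proposition~3.3]{GY2012}, so your proposal has to be judged on its own merits rather than against an in-paper argument. Your dyadic-annulus reduction of $G_L^\lambda$ to the aperture-$2^k$ cones $S_k$, and your $L^2$ computation (done annulus by annulus so that the mismatch between $\mu(B(x,t))^{-1}$ and $\mu(B(y,2^jt))^{-1}$ costs only $2^{jn}$, whence $\lambda>1$ suffices at $p=2$), are both sound.

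The gap is in the range $2<p<\infty$. With the change-of-aperture constant $2^{kn(1/2-1/p)}$ that you state, the series you must sum is $\sum_{k\ge0}2^{-kn[(\lambda-1)/2-(1/2-1/p)]}$, which converges if and only if $\lambda>2-2/p$; for every $p>2$ this is strictly stronger than the claimed $\lambda>1$, so the argument as written does not prove the lemma (your "converges as soon as $\lambda>1$" is only true at $p=2$). To get $\lambda>1$ for all $p\ge2$ you must pay nothing for the aperture. The standard device is Fefferman--Stein duality: for $\varphi\ge0$ one shows $\int_X G_L^\lambda(f)^2\varphi\,d\mu\lesssim\int_X\int_0^\infty|\Psi(t\sqrt L)f(y)|^2\,\mathcal M\varphi(y)\,\tfrac{dt}{t}d\mu(y)$ using $\sup_{t>0}\int_X\bigl(t/(t+d(x,y))\bigr)^{\lambda n}\varphi(x)\,\mu(B(x,t))^{-1}d\mu(x)\lesssim\mathcal M\varphi(y)$, valid precisely for $\lambda>1$, and then applies H\"older together with $\|\mathcal M\varphi\|_{(p/2)'}\lesssim\|\varphi\|_{(p/2)'}$. (Equivalently, with your normalization $\mu(B(x,2^kt))$ the change-of-aperture constant for $p\ge2$ is in fact $O(1)$ uniformly in $k$ --- but that is itself proved by this averaging/duality trick, not by interpolating an $L^\infty$-type endpoint with $L^2$.) The range $1<p<2$ is also not yet an argument: the weak-$(1,1)$ endpoint holds only for $\lambda>2$, so "interpolation with the $L^2$ bound" means interpolating a family of operators in which $\lambda$ itself varies with the interpolation parameter; extracting $\lambda>2/p$ requires either Stein's analytic interpolation in a complexified $\lambda$ or a direct Calder\'on--Zygmund argument in which the exponent $2/p$ emerges from balancing the tangential decay against the measure of the dilated bad cubes. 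Since the entire content of the lemma is the sharp dependence of the admissible $\lambda$ on $p$, these are not cosmetic omissions.
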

We show the following pointwise control of multipliers by the Littlewood-Paley $G$-function.
\begin{lemma}\label{Lemma3}
 Let $L$ be a non-negative self-adjoint operator such that the corresponding
heat kernels satisfy Gaussian bound  \eqref{ge}
and Plancherel estimate \eqref{e1.4} for some $q\in[2,\infty]$.  Let $s>n/2$, $k\in\NN$ and $k>s/2+1$. Then for any bounded Borel function $F$ which satisfies  $\sup_{t>0}\|\phi F(t\cdot)\|_{W^q_s}<\infty$, and for any $n/2<s_0<s$,
 \begin{align*}
&\left|(t^2L)^k\exp(-t^2L)\Psi(t\sqrt{L})F(L)f(x)\right|\\
&\leq C\Big(\sup_{t>0}\|\phi F(t\cdot)\|_{W^q_s}+|F(0)|\Big)\left(\int_X\left(\frac{t}{t+d(x,y)}\right)^{2s_0}\left|\Psi(t\sqrt{L})f(y)\right|^2
\frac{d\mu(y)}{\mu(B(x,t))}\right)^{\frac12}.
\end{align*}

\end{lemma}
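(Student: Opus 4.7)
My strategy is to reduce the stated pointwise bound to a weighted Plancherel-type estimate via Cauchy--Schwarz, and then establish that estimate by a dyadic decomposition of the composite spectral multiplier.

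By the spectral theorem, the factors $(t^2L)^k$, $e^{-t^2L}$, $\Psi(t\sqrt L)$, and $F(L)$ commute. Setting $M(\xi):=(t\xi)^{2k}e^{-(t\xi)^2}F(\xi^2)$, I write
\begin{equation*}
(t^2L)^k e^{-t^2L}\Psi(t\sqrt L)F(L)f(x)=\int_X K_M(x,y)\,\Psi(t\sqrt L)f(y)\wrt\mu(y),
\end{equation*}
where $K_M$ is the integral kernel of $M(\sqrt L)$. Cauchy--Schwarz with the weight $w(x,y):=(1+d(x,y)/t)^{2s_0}\mu(B(x,t))$ gives
\begin{equation*}
\bigl|\text{LHS}\bigr|^2\leq\bigg(\int_X|K_M(x,y)|^2 w(x,y)\wrt\mu(y)\bigg)\bigg(\int_X\Big(\tfrac{t}{t+d(x,y)}\Big)^{2s_0}|\Psi(t\sqrt L)f(y)|^2\tfrac{\wrt\mu(y)}{\mu(B(x,t))}\bigg).
\end{equation*}
The second factor is precisely the square of the quantity appearing on the right-hand side of the claim, so it suffices to prove the uniform weighted Plancherel bound
\begin{equation*}
\int_X|K_M(x,y)|^2(1+d(x,y)/t)^{2s_0}\mu(B(x,t))\wrt\mu(y)\lesssim\bigl(\sup_{t'>0}\|\phi F(t'\cdot)\|_{W^q_s}+|F(0)|\bigr)^2.
\end{equation*}

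For this, I decompose $M$ dyadically at the scale $t^{-1}$. Let $\eta_0\in C_c^\infty([0,2])$ and $\eta\in C_c^\infty([1/2,2])$ with $\eta_0(\xi)+\sum_{j\geq 1}\eta(2^{-j}\xi)\equiv 1$, and set $M_j(\xi):=M(\xi)\eta_j(t\xi)$, so that each $M_j$ is supported in $[0,R_j]$ with $R_j\sim 2^j/t$. A standard weighted Plancherel consequence of \eqref{ge} and \eqref{e1.4} (see \cite[Lemma~4.3]{DOS2002}) asserts that for $N$ supported in $[0,R]$ and $0\leq s_0<s$,
\begin{equation*}
\int_X|K_{N(\sqrt L)}(x,y)|^2(1+Rd(x,y))^{2s_0}\wrt\mu(y)\leq\frac{C}{\mu(B(x,R^{-1}))}\|N(R\cdot)\|_{W^q_s}^2.
\end{equation*}
Applying this to each $M_j$, together with $(1+d(x,y)/t)\leq(1+R_j d(x,y))$ (valid since $R_j\geq 1/t$) and the doubling bound $\mu(B(x,t))/\mu(B(x,R_j^{-1}))\lesssim 2^{jn}$, reduces the matter to summing $\sum_{j\geq 0}2^{jn}\|M_j(R_j\cdot)\|_{W^q_s}^2$. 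On the high-frequency pieces ($j\geq 1$), the prefactor $(t\xi)^{2k}e^{-(t\xi)^2}$ restricted to $\xi\sim 2^j/t$ contributes a rapidly decaying factor $4^{jk}e^{-c4^j}$; a change of variable $u=\xi^2$ (smooth on dyadic annuli away from $0$) transfers the hypothesis on $F$ to give $\|M_j(R_j\cdot)\|_{W^q_s}\lesssim 4^{jk}e^{-c4^j}\sup_{t'>0}\|\phi F(t'\cdot)\|_{W^q_s}$. On the low-frequency piece ($j=0$), the Sobolev embedding $W^q_s\hookrightarrow L^\infty$ on $[0,2]$ (valid since $s>n/2\geq 1/q$) bounds $\sup_{\xi\in[0,2]}|F(\xi^2/t^2)|$ by $|F(0)|$ plus a $W^q_s$-norm of $F$ on an adjacent annulus, producing the $|F(0)|$ contribution.

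\textbf{Main obstacle.} The delicate point is the Sobolev-norm bookkeeping for the rescaled, composed multipliers $M_j$, uniformly in $t$. The condition $k>s/2+1$ enters precisely to ensure that the polynomial losses $2^{jn}$ from doubling, the weight factor $R_j^{2s_0}$, and the contributions arising from derivatives of $(t\xi)^{2k}e^{-(t\xi)^2}$ through the $W^q_s$ norm are all absorbed by the super-exponential decay $e^{-c 4^j}$ on each dyadic shell; the weighted Plancherel estimate itself is standard but must be applied with supports carefully matched to each $M_j$.
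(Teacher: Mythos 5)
Your overall architecture --- Cauchy--Schwarz against the weight $(1+d(x,y)/t)^{2s_0}$ followed by a weighted Plancherel estimate for the composite multiplier --- is exactly the skeleton of the paper's proof. The paper additionally splits $F=(F-F(0))+F(0)$ and then simply \emph{quotes} the required weighted kernel bound from \cite[Lemmas~4.5 and 4.6]{GY2012}; you instead try to prove that bound from scratch by dyadic decomposition, and this is where your argument has a genuine gap.

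The gap is in the low-frequency regime. You lump all of $\{t\xi\le 2\}$ into a single piece $M_0$ and propose to control $\|M_0(R_0\cdot)\|_{W^q_s}$ by using the Sobolev embedding to bound $\sup_{\xi\in[0,2]}|F(\xi^2/t^2)|$. But the weighted Plancherel lemma needs a bound on the \emph{Sobolev norm} of the rescaled multiplier, not on its sup-norm, and the hypothesis $\sup_{t>0}\|\phi F(t\cdot)\|_{W^q_s}<\infty$ gives no control of the $W^q_s$ norm of $F$ on a neighbourhood of the origin: it only controls each dyadic annulus after rescaling it to $[1/2,2]$, and these contributions accumulate as one approaches $0$. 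For instance $F(\lambda)=\sum_{m\ge 1}\psi(2^m\lambda)$ with $\psi\in C_c^\infty([1/2,2])$ satisfies the hypothesis, yet $\|\eta_0(\xi)F(\xi^2)\|_{W^q_s}=\infty$ for $s>1/q$; only after multiplying by $(t\xi)^{2k}$ does the norm become finite, and verifying that requires decomposing dyadically all the way down to $0$ (say $t\xi\sim 2^{j}$ for $j\le 0$) and summing $\sum_{j\le 0}2^{2jk}\cdot 2^{-js_0}\cdot(\text{uniform annulus norm})$, which converges precisely because $2k>s_0$. This is where the hypothesis $k>s/2+1$ is actually used. Your closing paragraph misattributes its role to the high-frequency shells $j\ge 1$, where the factor $e^{-c4^j}$ makes every polynomial loss harmless and no condition on $k$ is needed at all. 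Your high-frequency analysis is correct as written; the low-frequency piece needs to be rewritten as a full dyadic sum over $j\le 0$ (or you should follow the paper and invoke \cite[Lemmas~4.5--4.6]{GY2012}, which carry out exactly this decomposition). The $|F(0)|$ term is then most cleanly produced by the paper's preliminary splitting $F=\widetilde F+F(0)$, with the constant part handled by the pure Gaussian bound on $(t^2L)^k e^{-t^2L}$.
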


\begin{proof} Write
 \begin{align} \label{L3_1}
 F(L)=F(L)-F(0)I+F(0)I.
 \end{align}
 Let $\widetilde{F}(t)=F(t)-F(0)$. For $k>s/2+1$ and $s>s_0>n/2$,
 \begin{align}\label{L3_2}
   &\left|(t^2L)^k\exp(-t^2L)\Psi(t\sqrt{L})\widetilde{F}(L)f(x)\right|\nonumber\\
   &=\bigg|\int_{X}K_{(t^2L)^k\exp(-t^2L)\widetilde{F}(L)}(x,y)\Psi(t\sqrt{L})f(y)d\mu(y)\bigg|\nonumber\\
   &\leq \left(\int_X \left(\frac{t}{t+d(x,y)}\right)^{-2s_0} \left|K_{(t^2L)^k\exp(-t^2L)\widetilde{F}(L)}(x,y)\right|^2d\mu(y)\right)^{1/2} \nonumber\\
   &\ \ \ \ \times\left(\int_X\left(\frac{t}{t+d(x,y)}\right)^{2s_0} \left |\Psi(t\sqrt{L})(f)(y)\right|^2
d\mu(y)\right)^{\frac12}\nonumber\\
&\leq C\sup_{t>0}\|\phi F(t\cdot)\|_{W^q_s}\left(\int_X\left(\frac{t}{t+d(x,y)}\right)^{2s_0} \left|\Psi(t\sqrt{L})f(y)\right|^2
\frac{d\mu(y)}{\mu(B(x,t))}\right)^{\frac12}.
\end{align}
where the last inequality follows from  \cite[Lemma~4.5 and Lemma~4.6]{GY2012}  that
$$\int_X \left(1+\frac{d(x,y)}{t}\right)^{2s_0} \left |K_{(t^2L)^k\exp(-t^2L)F(L)}(x,y)\right|^2d\mu(y)\leq \frac{C}{\mu(B(x,t))}\sup_{t>0}\|\phi F(t\cdot)\|_{W^q_s}^2.
$$
Also for $s_0>n/2$ and $k\in\NN$, we have
  \begin{align}\label{L3_3}
   &\left|(t^2L)^k\exp(-t^2L)\Psi(t\sqrt{L})f(x)\right|\nonumber\\
   %&=|\int_{X}K_{(t^2L)^k\exp(-t^2L)}(x,y)\Psi(t\sqrt{L})(f)(y)d\mu(y)|\\
   &\leq \left(\int_X \left(\frac{t}{t+d(x,y)}\right)^{-2s_0} \left|K_{(t^2L)^k\exp(-t^2L)}(x,y)\right|^2d\mu(y)\right)^{\frac12}\nonumber\\
   &\ \ \ \ \times\left(\int_X\left(\frac{t}{t+d(x,y)}\right)^{2s_0} \left|\Psi(t\sqrt{L})f(y)\right|^2
d\mu(y)\right)^{\frac12}\nonumber\\
&\leq C\left(\int_X\left(\frac{t}{t+d(x,y)}\right)^{2s_0} \left|\Psi(t\sqrt{L})f(y)\right|^2
\frac{d\mu(y)}{\mu(B(x,t))}\right)^{\frac12}.
\end{align}
 Combining \eqref{L3_1}-\eqref{L3_3}, we  obtain
 \begin{align*}
&  \left |(t^2L)^k\exp(-t^2L)\Psi(t\sqrt{L})F(L)f(x)\right|\\
&\leq C\Big (\sup_{t>0}\|\phi F(t\cdot)\|_{W^q_s}+|F(0)|\Big)\left(\int_X\left(\frac{t}{t+d(x,y)}\right)^{2s_0} \left|\Psi(t\sqrt{L})f(y)\right|^2
\frac{d\mu(y)}{\mu(B(x,t))}\right)^{\frac12}.
\end{align*}
This completes the proof.
\end{proof}
%\subsection{Muckenhoupt weights and Reverse H\"older weights}

\subsection{Muckenhoupt weights}\label{subsec:2.2}

%\begin{lemma}\label{RHAP}
%If $h\in RH_{\infty}(d\mu)$, then $h^{p-2}\in A_{p/r}(d\upsilon)$.
%\end{lemma}
%\begin{proof}
%  Let $q=2-\frac{p-2}{p/r-1}$.
%\end{proof}

Let $1\leq p <\infty$ and $\mu$ is a doubling measure. Let us recall the definition of $ A_p(d\mu)$. For $p=1$, $\omega\in A_p(d\mu)$  means that for any ball $B\subseteq X$,
$$\frac{1}{\mu(B)}\int_B\omega(y)d\mu(y) \leq C\omega(x), \mathrm{\ \ for\ \ } \mathrm{a.e.} \ x\in B.$$
For $p>1$, $\omega\in A_p(d\mu)$  means that
$$[\omega]_{A_p(d\mu)}:=\sup_{B:balls}\left(\frac{1}{\mu(B)}\int_{B}\omega(y)d\mu(y)\right)^{\frac1p}
\left(\frac{1}{\mu(B)}\int_{B}\omega(y)^{-\frac{1}{p-1}}d\mu(y) \right)^{\frac1{p'}}<\infty.$$
We set $A_\infty(d\mu)=\bigcup_{p\geq1}A_p(d\mu).$

%The reverse H\"older classes  $RH_{q}(d\mu),1\leq q\leq \infty$ are defined in the following way: for $1<q<\infty$, $\omega\in RH_q$
%\begin{align*}
%\left(\frac{1}{\mu(Q)}\int_Q\omega(y)^{q}d\mu\right)^{\frac{1}{q}}\leq C\frac{1}{\mu(Q)}\int_Q\omega(y)d\mu(y), \ \ \forall\ B\subseteq X.
%\end{align*}
%for the endpoint $q=\infty$, $\omega\in RH_\infty$ is given by the condition: there exists constant $C$ such that for any ball $B\subseteq X$,
%\begin{align*}
%\omega(x) \leq C\frac{1}{\mu(Q)}\int_Q\omega(y)d\mu(y), \ \ \mathrm{for\ \ \ a.e.}\ x\in X.
%\end{align*}

\begin{lemma}\label{RHL}
Let $1\leq p\leq \infty$. If $\omega\in A_p(d\mu)$, then the following statements hold for classes $A_p(d\mu)\mathrm{:}$

\rm{(i)} $A_{p}(d\mu)\subseteq  A_q(d\mu)$  for $1\leq p\leq q \leq \infty $.

\rm{(ii)} There exists a constant $\gamma>0$ such that for any ball $B$,
\begin{align*}
\left(\frac{1}{\mu(B)}\int_B\omega(y)^{1+\gamma}d\mu\right)^{\frac{1}{1+\gamma}}\leq \frac{C}{\mu(B)}\int_B\omega(y)d\mu.
\end{align*}

%$\omega$ satisfies doubling property, i.e. for all $\lambda\geq1,B\subseteq X$,
%$$\omega(\lambda B)\leq C \lambda^{np}\omega(B).$$

\rm{(iii)} For any ball $B\subseteq X$, let $A$ be the measurable subset of $B$, there exists a $\delta\in(0,1)$ such that
\begin{align*}
  \frac{\omega(A)}{\omega(B)}\leq C \left(\frac{\mu(A)}{\mu(B)}\right)^\delta.
\end{align*}

\rm{(iv)} For $1<p<\infty$ and any ball $B\subseteq X$, there exists constants $C_1,C_2>0$ such that
$$C_1\mu(B)\leq \omega(B)^{\frac1{p}}\left(\omega^{-\frac1{p-1}}(B) \right)^{\frac1{p'}}\leq C_2\mu(B).$$
\end{lemma}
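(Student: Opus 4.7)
The statement collects four standard facts about Muckenhoupt weights on spaces of homogeneous type, so my plan is to organize the proof around the one nontrivial ingredient, the reverse H\"older inequality (ii), and then deduce (i), (iii), (iv) from it together with the definition. All of these are classical on $\RR^n$ and extend to doubling metric spaces; I would cite \cite{GHS2006}-style references (for instance Garc\'{\i}a-Cuerva--Rubio de Francia, or Str\"omberg--Torchinsky) for the space-of-homogeneous-type versions, and then either sketch or quote the argument.

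For (i) the plan is to apply H\"older's inequality directly to the defining average: if $\omega \in A_p(d\mu)$ and $p \leq q$, then $\bigl(\mu(B)^{-1}\int_B \omega^{-1/(q-1)}d\mu\bigr)^{1/q'} \leq \bigl(\mu(B)^{-1}\int_B \omega^{-1/(p-1)}d\mu\bigr)^{1/p'}$ since $(q-1)/q' \leq (p-1)/p'$ in the appropriate sense, which upgrades the $A_p$ condition to an $A_q$ condition. For (iv) the plan is purely algebraic: multiply the two factors in the definition of $[\omega]_{A_p(d\mu)}$ by $\mu(B)$ and rearrange; the lower bound $C_1 \mu(B) \leq \omega(B)^{1/p}(\omega^{-1/(p-1)}(B))^{1/p'}$ is just H\"older applied to $1 = \omega^{1/p}\cdot\omega^{-1/p}$ integrated over $B$, while the upper bound is exactly $[\omega]_{A_p(d\mu)} < \infty$.

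The main obstacle is (ii), the reverse H\"older self-improvement. The standard route I would follow is: starting from $\omega \in A_p(d\mu)$, perform a Calder\'on--Zygmund stopping-time decomposition at height $\lambda > \mu(B)^{-1}\int_B \omega\,d\mu$ using the dyadic cubes of Lemma~\ref{le2.1} relative to $B$; on each stopping cube one controls $\omega(Q)$ by $\lambda \mu(Q)$ thanks to doubling and the $A_p$ condition, which yields a good-$\lambda$ inequality of the form $\mu\{x \in B : \omega(x) > \lambda\} \leq \theta\, \mu\{x \in B : \omega(x) > \lambda/c\}$ with $\theta < 1$ for $\lambda$ sufficiently large. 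Integrating this good-$\lambda$ estimate produces a small exponent $\gamma > 0$ such that $\mu(B)^{-1}\int_B \omega^{1+\gamma}d\mu \leq C \bigl(\mu(B)^{-1}\int_B\omega\,d\mu\bigr)^{1+\gamma}$. The delicate point in the homogeneous-type setting is to verify the stopping-time and good-$\lambda$ argument with the dyadic system of Lemma~\ref{le2.1}; properties (1)--(5) of that lemma are exactly what the argument needs.

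Finally (iii) follows from (ii) by H\"older: for a measurable $A \subseteq B$ and the exponent $1+\gamma$ from (ii),
\begin{equation*}
\omega(A) = \int_A \omega\,d\mu \leq \mu(A)^{1-\frac{1}{1+\gamma}} \Bigl(\int_B \omega^{1+\gamma}d\mu\Bigr)^{\frac{1}{1+\gamma}} \leq C\, \Bigl(\frac{\mu(A)}{\mu(B)}\Bigr)^{\delta} \omega(B),
\end{equation*}
with $\delta = \gamma/(1+\gamma)$, after using (ii) to absorb the high-power average back into $\omega(B)/\mu(B)$. Because the argument is entirely standard once (ii) is in hand, I would keep the write-up short, stating (i) and (iv) in a line each, referring to the literature for the reverse H\"older step, and then deriving (iii) in the two lines above.
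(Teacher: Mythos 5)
Your proposal is correct; all four parts are standard facts about Muckenhoupt weights on spaces of homogeneous type, and your sketches of (i), (iii), (iv) via H\"older and of (ii) via the Calder\'on--Zygmund good-$\lambda$ argument are the classical arguments. The paper itself gives no proof at all, simply referring to \cite{AM2007,G2014,ST1989}, so your write-up (which also defers the reverse H\"older step to the literature) is consistent with, and somewhat more detailed than, what the authors do.
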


\begin{proof}
  For the proof, we refer the reader to \cite{AM2007,G2014,ST1989}.
\end{proof}

\subsection{The Doob transform} \label{subsec:2.3}
In this subsection we describe one of the important tools for this paper, i.e. the Doob transform (or $h$-transform),
see  \cite{Gyrya-Saloff-Coste}. Assume that an operator $L$ related to a metric measure space
$(X,d,\mu)$ and
   $L$ is a self-adjoint operator that satisfies conditions \rm{(H-1)} and \eqref{LUGh}.

\begin{lemma}\label{Lemma9}
Assume that the kernel $T_t(x,y)$ satisfies  condition~\eqref{LUGh} for some function $h$. Then the measure $d\upsilon=h^2d\mu$ satisfies the doubling property.
\end{lemma}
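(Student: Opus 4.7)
The plan is to derive the doubling property for $d\upsilon = h^2\, d\mu$ from the two-sided bound~\eqref{LUGh} via the classical Chapman--Kolmogorov trick (Grigor'yan, Saloff-Coste) used to extract volume doubling from two-sided Gaussian estimates. The Doob-transformed density $p_t(x,y) := T_t(x,y)/(h(x)h(y))$ is what~\eqref{LUGh} controls from both sides, and it is symmetric with respect to $\upsilon$. Using the semigroup property of $T_t$ on $L^2(d\mu)$ together with self-adjointness, one checks the Chapman--Kolmogorov identity for the transformed kernel:
\begin{equation*}
p_{2t}(x,x)=\int_X p_t(x,z)^2\,d\upsilon(z).
\end{equation*}

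First I would apply the lower half of~\eqref{LUGh} on the ball $B(x,\sqrt{t})$, where the Gaussian factor $\exp(-d(x,z)^2/(c_1 t))$ is bounded below by $e^{-1/c_1}$, giving $p_t(x,z)\geq c_0/\mu_{h^2}(x,\sqrt{t})$ for $z\in B(x,\sqrt{t})$. Restricting the integral in the identity above to that ball yields
\begin{equation*}
p_{2t}(x,x)\geq \frac{c_0^2}{\mu_{h^2}(x,\sqrt{t})^2}\,\mu_{h^2}(x,\sqrt{t})=\frac{c_0^2}{\mu_{h^2}(x,\sqrt{t})}.
\end{equation*}
On the other hand, the upper half of~\eqref{LUGh} applied at time $2t$ with $y=x$ gives $p_{2t}(x,x)\leq C/\mu_{h^2}(x,\sqrt{2t})$. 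Chaining the two inequalities produces $\mu_{h^2}(x,\sqrt{2t})\leq C_0\,\mu_{h^2}(x,\sqrt{t})$ with $C_0 = C/c_0^2$. Setting $r=\sqrt{t}$ and iterating once yields $\mu_{h^2}(B(x,2r))\leq C_0^{2}\,\mu_{h^2}(B(x,r))$, which is the desired doubling inequality.

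There is no serious obstacle. The two book-keeping points to get right are: (i)~the correct symmetric transition density for the Doob-transformed semigroup acting on $L^{2}(d\upsilon)$ is $p_t(x,y)=T_t(x,y)/(h(x)h(y))$, so Chapman--Kolmogorov must be written against $d\upsilon$, not $d\mu$; (ii)~the lower Gaussian factor in~\eqref{LUGh} contributes only a harmless multiplicative constant on $B(x,\sqrt{t})$. Everything else is routine.
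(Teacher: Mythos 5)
Your argument is correct. The paper itself gives no proof of this lemma --- it simply cites \cite[Proposition~2.4]{PS2024} --- and what you have written is the standard on-diagonal Chapman--Kolmogorov argument (Grigor'yan, Saloff-Coste) for deducing volume doubling from two-sided Gaussian bounds, which is precisely the mechanism behind the cited result: the identity $p_{2t}(x,x)=\int_X p_t(x,z)^2\,d\upsilon(z)$ is legitimate because $p_t(x,y)=T_t(x,y)/(h(x)h(y))$ is the symmetric kernel of the Doob-transformed semigroup on $L^2(d\upsilon)$, the restriction to $B(x,\sqrt t)$ with the lower bound gives $p_{2t}(x,x)\gtrsim \upsilon(B(x,\sqrt t))^{-1}$, the upper bound at time $2t$ gives $p_{2t}(x,x)\lesssim \upsilon(B(x,\sqrt{2t}))^{-1}$, and two iterations of the resulting $\sqrt2$-doubling yield the claim. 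The constants are tracked correctly, so your proposal is a valid self-contained substitute for the external reference.
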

\begin{proof}
  For the proof, we refer the reader to \cite[Proposition~2.4]{PS2024}.
\end{proof}

%The cancellation property is important in the treatment of Littlewood-Paley operator and martingales differences. If the operator $L$ satisfies conservation %property, then $(tL)^k\exp(-tL),k\in\NN^+$ has the cancellation property. Without the assumption of the conservation property on $L$, we shall use Doob %transform to get over this obstacle. Let us recall some results from \cite{DP2017}.

With $L$-harmonic function $h$, we define a new measure by
$
d\upsilon(x)=h(x)^2d\mu(x),
$
the space $(X, d, \upsilon)$ satisfies the doubling condition.
Define
\begin{align*}
\mathcal{T}_t(x,y)=\frac{T_t(x,y)}{h(x)h(y)}.
\end{align*}
 The  Doob transform is a multiplication operator
$$
f\mapsto h^{-1}f.
$$
Observe that
$$
\|f\|_{L^2(d\mu)}=\|h^{-1}f\|_{L^2(d\upsilon)},
$$
so the Doob transform
is an isometric isomorphism between these two $L^2$ spaces. Moreover, a calculation shows
that $\mathcal{T}_t$ is a semigroup and its generator $\mathcal{L}$, which plays the image of $L$ under Doob transform,  is also
self-adjoint.

\begin{lemma}\label{Dh}
	Suppose that   $L$ is a self-adjoint operator that satisfies conditions \rm{(H-1)} and \eqref{LUGh}. Then the operator $\mathcal{L}$ satisfies the following properties:
	\begin{enumerate}
	\item [(i)] $\mathcal{T}_t(x,y)$ satisfies the  lower and upper Gaussian  bounds, i.e.
$$\frac{C^{-1}}{\upsilon(B(x,\sqrt{t}))}\exp\left(-\frac{d(x,y)^2}{c_1t}\right)\leq |\mathcal{T}_t(x,y)|\leq  \frac{C}{\upsilon(B(x,\sqrt{t}))}\exp\left(-\frac{d(x,y)^2}{c_2t}\right);$$

	\item [(ii)] $\mathcal{L}$ has  the conservation property. That is,
$$\int_X \mathcal{T}_t(x,y)d\upsilon(y) =1,\ \ \mathrm{for\ all\ }\ \ t>0$$

	\item [(iii)] There exist  constants $C,c$ and $\gamma>0$ such that for $d(y,z)\leq \sqrt{t}$,
\begin{align}\label{Tf1}
|\mathcal{T}_t(x,y)-\mathcal{T}_t(x,z)|\leq \frac{C}{\upsilon(B(x,\sqrt{t}))}\left(\frac{d(y,z)}{\sqrt{t}}\right)^{\gamma}\exp\left(-\frac{d(x,y)^2}{ct}\right).
\end{align}
\end{enumerate}
\end{lemma}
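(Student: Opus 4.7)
The plan is to verify the three assertions in turn; (i) and (ii) are essentially unfoldings of the hypotheses, while the Hölder bound (iii) is the genuine analytic content. For (i), one reads off the identity $\upsilon(B(x,\sqrt t))=\mu_{h^2}(B(x,\sqrt t))$ directly from $d\upsilon=h^2d\mu$, so that the two-sided inequality in \eqref{LUGh}, combined with the definition $\mathcal{T}_t(x,y)=T_t(x,y)/(h(x)h(y))$, gives (i) verbatim. For (ii), I would compute
\[
\int_X \mathcal{T}_t(x,y)\,d\upsilon(y)=\frac{1}{h(x)}\int_X T_t(x,y)h(y)\,d\mu(y)=\frac{T_t h(x)}{h(x)}=1,
\]
invoking (H-1) at the last step. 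Doubling of $\upsilon$, needed for the kernel $\mathcal{T}_t$ to play the role of a genuine heat kernel on a space of homogeneous type, has already been recorded in Lemma~\ref{Lemma9}.

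The main obstacle is the Hölder estimate (iii). My plan is to invoke the Grigor'yan--Saloff-Coste theory contained in the monograph \cite{Gyrya-Saloff-Coste} that is already cited in this subsection. On the doubling space $(X,d,\upsilon)$ the semigroup $\mathcal{T}_t$ is self-adjoint, Markovian by (ii), and its kernel satisfies two-sided Gaussian bounds by (i). Under exactly these hypotheses, the main equivalence theorem asserts that $\mathcal{T}_t$ satisfies a parabolic Harnack inequality, and a parabolic Harnack inequality for a self-adjoint conservative semigroup classically implies quantitative spatial Hölder continuity of the kernel with on-diagonal size $1/\upsilon(B(x,\sqrt t))$ and some exponent $\gamma\in(0,1)$. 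This already produces the required oscillation bound for $|\mathcal{T}_t(x,y)-\mathcal{T}_t(x,z)|$ in the regime $d(y,z)\le\sqrt t$, but without the Gaussian tail.

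To reinstate the Gaussian factor $\exp(-d(x,y)^2/(ct))$ appearing in \eqref{Tf1}, I would apply the standard self-improvement: either use Davies' exponential perturbation method (conjugate $\mathcal{T}_t$ by $e^{\lambda d(\cdot,x_0)}$ and optimize in $\lambda$) to upgrade the bare Hölder bound into a Gaussian-weighted one, or interpolate directly against the upper bound from (i) by telescoping along a short path from $y$ to $z$; the restriction $d(y,z)\le\sqrt t$ ensures that $d(x,y)$ and $d(x,z)$ are comparable up to an additive $\sqrt t$, so the Gaussian factor survives after enlarging the constant $c_2$ slightly. Combining these ingredients yields \eqref{Tf1} with the exponent $\gamma$ furnished by the Harnack step, completing the proof of (iii).
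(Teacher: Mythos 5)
Your parts (i) and (ii) coincide with the paper's treatment, which disposes of them in one line by unwinding $d\upsilon=h^2d\mu$, the definition $\mathcal{T}_t(x,y)=T_t(x,y)/(h(x)h(y))$, and (H-1). For (iii) the paper does no work at all: it simply refers to \cite[Theorem~5]{DP2017}. Your sketch essentially reconstructs the proof behind that citation: two-sided Gaussian bounds for the conservative, self-adjoint, Markovian semigroup $\mathcal{T}_t$ on the doubling space $(X,d,\upsilon)$ give a parabolic Harnack inequality, hence interior H\"older continuity of caloric functions and of the kernel, and the Gaussian tail in \eqref{Tf1} is restored by interpolating (geometric mean) against the upper bound in (i), which is legitimate because $d(y,z)\le\sqrt t$ makes $d(x,y)$ and $d(x,z)$ comparable up to an additive $\sqrt t$. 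The one step you pass over too quickly is the appeal to ``the main equivalence theorem'': the equivalence between two-sided Gaussian bounds and the parabolic Harnack inequality in the Grigor'yan--Saloff-Coste--Sturm theory is formulated for \emph{strongly local} regular Dirichlet forms, and nothing in (H-1) or \eqref{LUGh} asserts that the form of $\mathcal{L}$ is local. This is repairable --- the Gaussian upper bound forces the jumping measure of the associated Dirichlet form to vanish, and conservativeness kills the killing part, so the form is automatically strongly local --- but the point has to be made explicit; supplying the H\"older estimate directly in this abstract semigroup setting is exactly what \cite[Theorem~5]{DP2017} does and why the authors defer to it. With that observation added, your argument is a correct, self-contained substitute for the citation.
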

\begin{proof}
the result of \rm{(i)} and  \rm{(ii)} can follow from conditions~\rm{(H-1)}, \eqref{LUGh}, $d\upsilon=h^2d\mu$ and  $\mathcal{T}_t(x,y)=T_t(x,y)/(h(x)h(y))$.
  For the proof of  \rm{(iii)}, we refer the reader to {\color{red}\cite[Theorem~5]{DP2017}}.
\end{proof}
\medskip

Let   $\mathcal{K}_t(x,y)$  denote the kernel of $t\mathcal{L}\exp(-t\mathcal{L})$. From \cite[Theorem~4]{S2004}, we have
$$\mathcal{K}_t(x,y)\leq \frac{C}{\upsilon(B(x,\sqrt{t}))}\exp\left(-\frac{d(x,y)^2}{c_3t}\right).$$
Observe that
$$\mathcal{K}_t(x,y)=2\int_X \mathcal{K}_{t/2}(x,z)\mathcal{T}_{t/2}(z,y)d\upsilon(z).
$$
This, in combination with \eqref{Tf1},  gives  that for $d(y,z)\leq \sqrt{t}$, there exists a $\gamma>0$ such that
\begin{align}\label{Tf2}
|\mathcal{K}_t(x,y)-\mathcal{K}_t(x,z)|\leq \frac{C}{\upsilon(B(x,\sqrt{t}))}\left(\frac{d(y,z)}{\sqrt{t}}\right)^{\gamma}\exp\left(-\frac{d(x,y)^2}{c_4t}\right).
\end{align}

\medskip
%\begin{theorem}\label{Theorem2}
%Suppose that the space $(X,d,\mu)$ satisfies condition \rm{(H-1)} and the operator $L$ satisfies conditions \rm{(H-2)} and \rm{(A-3)} for some $q\in[2,\infty]$.
%  For $1\leq r<2$, suppose that there exists a constant $s>d/r$ s.t. $\{m_i\}_{i=1}^{N}$ satisfying the condition
%  $$\sup_{t>0}\|\eta m_i(t\cdot)\|_{W^q_{s}}+m_i(0)\leq B,\ \ \mathrm{for}\ i=1,2,\cdots,N.$$
%  Then for all $p\in(r,\infty)$,
%  \begin{align}\label{a2}
%    \|\sup_{1\leq i\leq N}|m_i(L)(f)|\ \|_{L^p(\upsilon)}\leq C_{p,r}B\sqrt{\log(1+N)}\|f\|_{L^p(\upsilon)}.
%  \end{align}
%\end{theorem}

%
%\begin{theorem}\label{Theroem3}
%Suppose  $1\leq r<2$ and $r<p<\infty$. For some $s>0$
%$$\|\eta m(2^k\cdot)\|_{W^2_{s}}+m(0)\leq \omega(k).$$
%Assume that the non-increasing rearrangement function $\omega^{*}$ satisfies
%$$\omega^{*}(0)+\sum_{\ell\geq1}\frac{\omega^{*}(\ell)}{\ell\sqrt{\log\ell}}<\infty.$$
%
%(i)\ If $s>d/r$, then $M_m^{d}$ is bounded on $L^p(\mu)$.
%
%(ii)\ If $s>d/r+1$, then $M_m$ is bounded on $L^p(\mu)$.
%
%
%
%\end{theorem}

\smallskip

Recall that   a new measure is defined by
$
d\upsilon(x)=h(x)^2d\mu(x),
$
and
the space $(X, d, \upsilon)$ satisfies the doubling property.
By Lemma~\ref{le2.1},  we obtain a system of dyadic cubes   $\{Q^\alpha_k\}_{k,\alpha}$.   Denote by $\mathcal{D}_k$ the family of dyadic cubes of length $\delta^k$.
Let $Q_k(x)\in \mathcal{D}_k$ is  a  dyadic cube with length $\delta^{k}$ and   $x\in Q_k(x)$.
On space  $(X,d,\upsilon)$, let us denote by $\mathbb{E}^\upsilon_k$ the expectation operator
\begin{eqnarray}\label{e2.9}
\mathbb{E}_k^{\upsilon}(g)(x):=\sum_{Q\in \mathcal{D}_k}\chi_{Q}(x)\frac{1}{\upsilon(Q)}\int_{Q}g(y)d\upsilon(y)
:=\frac1{\upsilon(Q_k(x))}\int_{Q_{k}(x)}g(y)d\upsilon(y),
\end{eqnarray}
denote by $\mathbb{D}^\upsilon_k$ the martingale differences,
 \begin{eqnarray}\label{e2.10}
	\mathbb{D}^\upsilon_k(g)(x)&:=&\mathbb{E}^\upsilon_{k+1}(g)(x)-\mathbb{E}^\upsilon_k(g)(x)\nonumber\\
	&=:&\frac{1}{\upsilon(Q_{k+1}(x))}
	\int_{Q_{k+1}(x)}g(y)d\upsilon(y)-\frac{1}{\upsilon(Q_k(x))}\int_{Q_{k}(x)}g(y)d\upsilon(y)
	,
\end{eqnarray}
and denote by $\mathbb{S}^\upsilon$  the square function for dyadic martingale in $(X,d,\upsilon)$,
\begin{eqnarray}\label{e2.11}
	\mathbb{S}^\upsilon(g)(x):=\bigg(\sum_{k\in \ZZ} \left\|\mathbb{D}^\upsilon_k(g)\right\|^2_{L^\infty(Q_k(x),d\upsilon)}\bigg)^{1/2}.
\end{eqnarray}

In \cite{CWW1985}
Chang, Wilson and Wolff  proved the exponential-square  integrability and the good-$\lambda$ inequality of square function for dyadic martingale on $\RR^n$. Later,  Wu and Yan \cite{WY2018} extended the result to  the space of  homogeneous type, and it can be stated  in the following way.
\begin{lemma}\label{Lemma5}
	Assume $(X,d,\upsilon)$ satisfies the doubling property.  For any $f\in L^1_{loc}(X,d\upsilon)$,  there is a  constant $c_n$ so that for some $0<\varepsilon<1/2$,
	\begin{eqnarray}\label{ee1}
	 &&\upsilon\left(\left\{x\in X:   \   \sup_{k\in\ZZ}\left|\mathbb{E}^\upsilon_k(f-f_X)(x)\right|>2\lambda,  \   \    \mathbb{S}^\upsilon(f)(x)<\varepsilon\lambda\right\}\right)\nonumber\\
	&&	   \leq C\exp\left(-\frac{c_n}{\varepsilon^2}\right)\upsilon\left(\left\{x\in X:\   \sup_{k\in\ZZ}  \left|\mathbb{E}^\upsilon_k(f-f_X)(x)\right|>\lambda\right\}\right),\ \ \forall \lambda>0,
	\end{eqnarray}
where $f_X=\upsilon(X)^{-1}\int_Xfd\upsilon$ when $0<\upsilon(X)<\infty$ and  $f_X=0$  when $\upsilon(X)=\infty$.
\end{lemma}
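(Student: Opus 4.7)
The plan is to combine a Calder\'on--Zygmund stopping-time decomposition of the upper level set with a localized exponential-square estimate on each stopping cube, the latter proved by a conditional Azuma--Hoeffding argument.

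First I would decompose
$$E_\lambda := \bigl\{x\in X : \sup_{k\in\ZZ}|\mathbb{E}_k^\upsilon(f-f_X)(x)| > \lambda\bigr\}$$
into a disjoint union of maximal dyadic cubes $\{Q_j\}$: for each $x\in E_\lambda$ there is a smallest generation $k$ for which the dyadic cube $Q_k(x)$ satisfies $|\mathbb{E}_k^\upsilon(f-f_X)|>\lambda$, and since $\mathbb{E}_k^\upsilon$ is constant on generation-$k$ cubes these stopping cubes are pairwise disjoint and cover $E_\lambda$; on the parent $\widehat Q_j$ one still has $|\mathbb{E}_k^\upsilon(f-f_X)|\le \lambda$. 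It then suffices, after summing in $j$, to establish the local estimate
$$\upsilon\bigl(\{x\in Q_j : \sup_k |\mathbb{E}_k^\upsilon(f-f_X)|>2\lambda,\ \mathbb{S}^\upsilon(f)<\varepsilon\lambda\}\bigr)\le C e^{-c/\varepsilon^2}\upsilon(Q_j).$$
(The degenerate case $\upsilon(X)<\infty$ with $E_\lambda=X$ is folded in by treating $X$ itself as the root cube and replacing $\mathbb{E}_{k_Q}^\upsilon(f)$ below by $f_X$.)

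Second, on a fixed stopping cube $Q$ of generation $k_Q$ I would introduce the truncated martingale $g_k:=\mathbb{E}_k^\upsilon(f)-\mathbb{E}_{k_Q}^\upsilon(f)$ for $k\ge k_Q$, whose differences are $\mathbb{D}_k^\upsilon(f)$. On the good set $\{\mathbb{S}^\upsilon(f)<\varepsilon\lambda\}\cap Q$ the $L^\infty$ norms $M_k(x):=\|\mathbb{D}_k^\upsilon(f)\|_{L^\infty(Q_k(x),d\upsilon)}$ satisfy $\sum_{k\ge k_Q} M_k(x)^2<\varepsilon^2\lambda^2$ by definition of $\mathbb{S}^\upsilon$, and each $\mathbb{D}_k^\upsilon(f)$ is pointwise bounded by $M_k$ on its own cube. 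The one-step bound
$$\mathbb{E}_k^\upsilon\!\left[e^{t\,\mathbb{D}_k^\upsilon(f)}\,\bigm|\,Q_k(\cdot)\right]\le \exp(t^2 M_k^2/2),$$
which is the standard consequence of $\cosh(tu)\le e^{t^2u^2/2}$ applied to the mean-zero bounded difference across the finitely many children of $Q_k(x)$ (property (4) of Lemma~\ref{le2.1}), iterates along the dyadic tower rooted at $Q$ to give
$$\int_Q \exp\Bigl(t\,g_N(x)-\tfrac{t^2}{2}\sum_{k_Q\le k<N}M_k(x)^2\Bigr)d\upsilon(x)\le \upsilon(Q).$$
Restricting to $\{\mathbb{S}^\upsilon(f)<\varepsilon\lambda\}$, choosing $t=1/(\varepsilon^2\lambda)$, and applying Chebyshev yields
$$\upsilon\bigl(\{x\in Q : |g_N(x)|>\lambda,\ \mathbb{S}^\upsilon(f)<\varepsilon\lambda\}\bigr)\le 2e^{-1/(2\varepsilon^2)}\upsilon(Q).$$
Letting $N\to\infty$ by monotone convergence and noting that $\sup_k|\mathbb{E}_k^\upsilon(f-f_X)|>2\lambda$ on $Q$ forces $\sup_{k\ge k_Q}|g_k|>\lambda$ (because $|\mathbb{E}_{k_Q-1}^\upsilon(f-f_X)|\le \lambda$ on $\widehat Q$) delivers the required localized good-$\lambda$ bound, which then sums against $\sum_j \upsilon(Q_j)=\upsilon(E_\lambda)$.

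The main obstacle will be making the conditional exponential moment inequality and its tower iteration fully rigorous when $M_k(x)$ is $x$-dependent and the filtration is given by nested dyadic cubes rather than an abstract sequence of $\sigma$-algebras on a probability space; the cleanest route is to restrict to the finitely generated sub-$\sigma$-algebra on $Q$ coming from the subcubes of $Q$ in $\mathcal{D}_k$, renormalize $\upsilon|_Q$ to a probability measure, and verify the one-step bound child-by-child using that each parent has only boundedly many children (Lemma~\ref{le2.1}(4)) before invoking the product formula along the tower. The doubling property of $\upsilon$ (Lemma~\ref{Lemma9}) then guarantees that the initial Calder\'on--Zygmund decomposition produces stopping cubes with the correct measure-theoretic control, and the assembly of local estimates into the global good-$\lambda$ inequality proceeds exactly as in the Euclidean argument of Chang--Wilson--Wolff.
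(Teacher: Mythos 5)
The paper does not actually prove this lemma: it is quoted verbatim from Wu--Yan \cite{WY2018}, who extended the Chang--Wilson--Wolff exponential-square estimate \cite{CWW1985} to spaces of homogeneous type, and the only place the paper touches the argument is in Lemma~\ref{Lemma8}, where it again cites \cite{WY2018} for the local estimate on each stopping cube. Your proposal is a faithful reconstruction of that Chang--Wilson--Wolff argument (maximal dyadic stopping cubes plus a conditional Hoeffding/supermartingale bound along the dyadic tower), and the core of it is sound: the one-step bound $\mathbb{E}^\upsilon_k[e^{t\mathbb{D}^\upsilon_k(f)}]\le e^{t^2M_k^2/2}$ is legitimate because $\mathbb{D}^\upsilon_k(f)$ has conditional mean zero over the boundedly many children of $Q_k(x)$ and $M_k$ is constant on $Q_k(x)$, so the telescoping product $\int_Q\exp(tg_N-\tfrac{t^2}{2}\sum M_k^2)\,d\upsilon\le\upsilon(Q)$ and the Chebyshev step go through exactly as you describe.

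Two points need repair. First, your justification that $\sup_{k\ge k_Q}|g_k|>\lambda$ on the bad set is incomplete: maximality only gives $|\mathbb{E}^\upsilon_{k_Q-1}(f-f_X)|\le\lambda$ on the parent, and to control $|\mathbb{E}^\upsilon_{k_Q}(f-f_X)|$ on $Q$ itself you must add the jump $|\mathbb{D}^\upsilon_{k_Q-1}(f)|\le \mathbb{S}^\upsilon(f)<\varepsilon\lambda$, available on the good set; this yields $|g_k|>(1-\varepsilon)\lambda>\lambda/2$ rather than $>\lambda$, which only degrades the constant $c_n$ in the exponential and is harmless. Second, for general $f\in L^1_{\mathrm{loc}}$ the stopping-time construction may fail to terminate (an infinite increasing chain of cubes all lying in $E_\lambda$); one must dispose of this case separately, e.g.\ by observing that then $\upsilon(E_\lambda)=\infty$ and the inequality is vacuous. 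This is precisely the subtlety the paper's weighted variant, Lemma~\ref{Lemma8}, sidesteps by assuming $f\in L^{p_0}(d\upsilon)$ and splitting into the cases $\upsilon(E_\lambda)=0$ and $0<\upsilon(E_\lambda)<\infty$. With these two adjustments your argument is complete and matches the cited source.
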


In the weighted situation, the  good-$\lambda$ inequality of the square function for dyadic martingale  also holds in the following way.
\begin{lemma}\label{Lemma8}
Let $f\in L^{p_0}(d\upsilon)$ for some $p_0\in[1,\infty]$.
Assume $(X,d,\upsilon)$ satisfies the doubling property. Let $1\leq p\leq\infty$ and $\omega\in A_p(d\upsilon)$. Then there are  constants $c_n,C>0$ so that for some $0<\varepsilon<1/2$,
\begin{align}\label{ee8}
  &\omega\left(\left\{x\in X:\   \sup_{k\in\ZZ}\left|\mathbb{E}^\upsilon_k(f-f_X)(x)\right|>2\lambda, \   \     \mathbb{S}^\upsilon(f)(x)<\varepsilon\lambda\right\}\right)\nonumber\\
 & \leq C\exp\left(-\frac{c_n}{\varepsilon^2}\right)\omega\left(\left\{x\in X:\    \sup_{k\in\ZZ} \left|\mathbb{E}^\upsilon_k(f-f_X)(x)\right|>\lambda\right\}\right)\ \ \forall \lambda>0,
\end{align}
where $f_X=\upsilon(X)^{-1}\int_Xfd\upsilon$ when $0<\upsilon(X)<\infty$ and  $f_X=0$  when $\upsilon(X)=\infty$.
\end{lemma}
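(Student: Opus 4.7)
The plan is to derive \eqref{ee8} from the unweighted version, Lemma~\ref{Lemma5}, via a Calder\'on-Zygmund stopping-time decomposition combined with the reverse-H\"older/self-improving estimate in Lemma~\ref{RHL}(iii). Since $\omega \in A_p(d\upsilon) \subseteq A_\infty(d\upsilon)$ by Lemma~\ref{RHL}(i), smallness of a subset of a dyadic cube in $\upsilon$-measure is automatically converted to smallness in $\omega$-measure with a polynomial loss, and this is exactly what is needed to convert an unweighted exponential good-$\lambda$ estimate into a weighted one.

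First I would linearize the level set. Let $M^\upsilon f(x) := \sup_{k\in\ZZ} |\mathbb{E}_k^\upsilon(f - f_X)(x)|$, and decompose $\{M^\upsilon f > \lambda\}$ as the disjoint union of the maximal dyadic cubes $\{Q_j\}$ at which $|\mathbb{E}_{k(Q_j)}^\upsilon(f - f_X)|$ first exceeds $\lambda$, where $k(Q_j)$ denotes the scale of $Q_j$. Maximality forces the supremum over scales $k \leq k(Q_j)$ to be at most $\lambda$ on $Q_j$, so the event $\{M^\upsilon f > 2\lambda\}\cap Q_j$ is driven entirely by scales $k > k(Q_j)$. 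Setting $g_j := f - \mathbb{E}_{k(Q_j)}^\upsilon(f)$, one has $\mathbb{E}_k^\upsilon(g_j) = \mathbb{E}_k^\upsilon(f) - \mathbb{E}_{k(Q_j)}^\upsilon(f)$ and $\mathbb{D}_k^\upsilon(g_j) = \mathbb{D}_k^\upsilon(f)$ on $Q_j$ for $k \geq k(Q_j)$; hence $|\mathbb{E}_k^\upsilon(f - f_X)| > 2\lambda$ forces $|\mathbb{E}_k^\upsilon(g_j)| > \lambda$, while $\mathbb{S}^\upsilon(g_j)(x) \leq \mathbb{S}^\upsilon(f)(x)$ for $x\in Q_j$. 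Viewing $(Q_j, d, \upsilon|_{Q_j})$ as a homogeneous-type space in its own right (doubling being inherited via Lemma~\ref{le2.1}(5)) and applying Lemma~\ref{Lemma5} to $g_j$ yields the local estimate
$$\upsilon\bigl(\{x\in Q_j:\ M^\upsilon f(x) > 2\lambda,\ \mathbb{S}^\upsilon f(x) < \varepsilon\lambda\}\bigr) \leq C \exp(-c_n/\varepsilon^2)\, \upsilon(Q_j).$$

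Now the weight enters through Lemma~\ref{RHL}(iii): there exists $\delta\in(0,1)$, depending only on $[\omega]_{A_p(d\upsilon)}$, such that for $F_j := \{M^\upsilon f > 2\lambda,\ \mathbb{S}^\upsilon f < \varepsilon\lambda\} \cap Q_j$,
$$\omega(F_j) \leq C\Big(\frac{\upsilon(F_j)}{\upsilon(Q_j)}\Big)^\delta \omega(Q_j) \leq C' \exp(-c_n\delta/\varepsilon^2)\, \omega(Q_j).$$
Summing over the disjoint family $\{Q_j\}$, whose union is exactly $\{M^\upsilon f > \lambda\}$, and absorbing $\delta$ into a new constant $c_n$ (now depending on $\omega$) produces \eqref{ee8}.

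The main obstacle, and really the only nontrivial point, is justifying the local version of Lemma~\ref{Lemma5} on each stopping cube $Q_j$. Lemma~\ref{Lemma5} is stated globally on $(X,d,\upsilon)$, so one must verify that the restricted dyadic system on $Q_j$ (i.e.\ the dyadic descendants of $Q_j$) satisfies the hypotheses of the unweighted Chang-Wilson-Wolff estimate, and that the substitution $f\mapsto g_j$ correctly isolates the ``mean-zero'' martingale on $Q_j$ whose exponential-square integrability is governed by $\mathbb{S}^\upsilon(f)|_{Q_j}$. The integrability assumption $f\in L^{p_0}$ plays no quantitative role; it is used only to ensure that all conditional expectations and the square function are well defined almost everywhere.
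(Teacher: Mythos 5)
Your proposal is correct and follows essentially the same route as the paper: a maximal dyadic (stopping-time) decomposition of $\{\sup_k|\mathbb{E}^\upsilon_k(f-f_X)|>\lambda\}$, the local unweighted Chang--Wilson--Wolff estimate on each stopping cube (which the paper simply cites as (3.11) of \cite{WY2018} rather than re-deriving from Lemma~\ref{Lemma5}), and then Lemma~\ref{RHL}(iii) to convert $\upsilon$-smallness into $\omega$-smallness before summing over the disjoint cubes. The paper additionally spells out the degenerate case where the level set has $\upsilon$-measure zero and the case $0<\upsilon(X)<\infty$ via $g=f-f_X$, but these are the same minor points your argument handles implicitly.
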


\begin{proof}
Let us discuss the lemma into two cases: $\upsilon(X)=\infty$ and $0<\upsilon(X)<\infty$.

\noindent{\bf{Case~1: $\upsilon(X)=\infty$. }}

$f\in L^{p_0}(d\upsilon)$ for some $p_0\in[1,\infty]$. Then from the weak boundedness of Hardy-Littlewood maximal function, we have
  \begin{align*}
  \upsilon\left(\left\{x\in X:\   \sup_{k\in\ZZ}\left|\mathbb{E}^\upsilon_k(f)(x)\right|>\lambda\right\}\right)
  %&\leq \lambda^{-p_0}\|\sup_{k\in\ZZ}|\mathbb{E}^\upsilon_k(f)(x)|\ \|_{L^{p_0}(d\upsilon)}^{p_0}\\
  &\leq C\lambda^{-p_0}\left\|f\right\|_{L^{p_0}(d\upsilon)}^{p_0}<\infty.
  \end{align*}
If $\upsilon\left(\left\{x\in X:\  \sup_{k\in\ZZ}\left|\mathbb{E}^\upsilon_k(f)(x)\right|>\lambda\right\}\right)=0$, then
   $$
   \upsilon\left(\left\{x\in Q:\     \sup_{k\in\ZZ}\left|\mathbb{E}^\upsilon_k(f)(x)\right|>\lambda\right\}\right)=0,  \quad\forall \,Q\in \mathcal{D}.
   $$
  From the H\"older inequality and the  reverse H\"older inequality, there exists a $\gamma>0$ such that
   \begin{align*}
    & \omega\left(\left\{x\in Q:\   \sup_{k\in\ZZ} \left|\mathbb{E}^\upsilon_k(f)(x)\right|>\lambda\right\}\right)\\
     &\leq \left(\int_Q\omega(y)^{1+\gamma}d\upsilon(y)\right)^{\frac{1}{1+\gamma}}\upsilon\left(\left\{x\in Q:\   \sup_{k\in\ZZ}  \left|\mathbb{E}^\upsilon_k(f)(x)\right|>\lambda\right\}\right)^{\frac{\gamma}{\gamma+1}}\\
     &\leq C\omega(Q)\bigg (\upsilon\left(\left\{x\in Q:\   \sup_{k\in\ZZ} \left|\mathbb{E}^\upsilon_k(f)(x)\right|>\lambda  \right\}\right)/\upsilon(Q)  \bigg)^{\frac{\gamma}{\gamma+1}}=0.
   \end{align*}
 Therefore, for any dyadic cube $Q$,
   $$
   \omega\left(\left\{x\in Q:\  \sup_{k\in\ZZ}\left|\mathbb{E}^\upsilon_k(f)(x)\right|>\lambda\right\}\right)=0.
   $$
Then  estimate~\eqref{ee8}  obviously holds.

Let us consider
   $$
   0<\upsilon\left(\left\{x\in X:\   \sup_{k\in\ZZ}  \left|\mathbb{E}^\upsilon_k(f)(x)\right|>\lambda\right\}\right)<\infty.
   $$
   Let $\{Q_{\lambda,i}\}$ be the maximal dyadic cubes such that $\sup_{Q_{\lambda,i}}|f_{Q_{\lambda,i}}|>\lambda$. We know that these $\{Q_{\lambda,i}\}$ are disjoint and satisfy
   $$
   \left\{x\in X:\ \sup_{k\in\ZZ} \left|\mathbb{E}^\upsilon_k(f)(x)\right|>\lambda\right\}=\bigcup_{i\in\ZZ} Q_{\lambda,i}.
   $$
   Then to prove estimate~\eqref{ee8}, it just need to prove that
 \begin{align}\label{ee9}
  &\omega\left(\left\{x\in Q_{\lambda,i}:
  \   \sup_{k\in\ZZ}\left|\mathbb{E}^\upsilon_k(f)(x)\right|>2\lambda,\  \    \mathbb{S}^\upsilon(f)(x)<\varepsilon\lambda\right\}\right)
 \leq C\exp\left(-\frac{c_n}{\varepsilon^2}\right)\omega(Q_{\lambda,i}).
\end{align}
From \cite[(3.11) in p. 1466]{WY2018}, we know
 \begin{align}\label{ee10}
  &\upsilon\left(\left\{x\in Q_{\lambda,i}:\   \sup_{k\in\ZZ} \left |\mathbb{E}^\upsilon_k(f)(x)\right|>2\lambda, \   \   \mathbb{S}^\upsilon(f)(x)<\varepsilon\lambda\right\}\right)
 \leq C\exp\left(-\frac{\tilde{c}_n}{\varepsilon^2}\right)\upsilon(Q_{\lambda,i}).
\end{align}
Therefore, from Lemma~\ref{RHL} and estimate~\eqref{ee10}, we can deduce estimate~\eqref{ee9} when $\upsilon(X)=\infty$.

\smallskip

\noindent{\bf{Case~2: $0<\upsilon(X)<\infty$}}.

Let $g=f-(f)_X$. From the fact $\int_Xgd\upsilon=0$, we know that $\left\{x\in X:\   \sup_{k\in\ZZ}  \left|\mathbb{E}_k^\upsilon(g)(x)\right|>\lambda\right\} \subsetneqq X$. From the definition of the martingale  square function for dyadic cubes, $\mathbb{S}^\upsilon(f)=\mathbb{S}^\upsilon(g)$. Combining these facts, we can choose the maximal dyadic cubes $\{\widetilde{Q}_{\lambda,i}\}$ such that $\sup_{\widetilde{Q}_{\lambda,i}}|g_{\widetilde{Q}_{\lambda,i}}|>\lambda$ and use the argument of Case~1 to prove \eqref{ee8}.

This finishes the proof of Lemma~\ref{Lemma8}.
\end{proof}

\medskip

\section{Proof of  Theorem~\ref{Theorem2}}\label{sec:3}
\setcounter{equation}{0}
%\section{Boundedness of maximal functions  with Gaussian upper bound}

In this section we prove  Theorem~\ref{Theorem2}, in which $L$ is a self-adjoint operator that satisfies  Gaussian estimate  \eqref{ge} and Plancherel condition \eqref{e1.4} for some $q\in[2,\infty]$.
 Our approach is inspired by Carbery~\cite{Car} and Dappa and Trebels \cite{DT1985}.

 To prove  Theorem~\ref{Theorem2},  we need some estimates about the $L^p$-boundedness of Stein's square function associated with operators.
For $\delta>-1$, the Bochner-Riesz means  of order $\delta$ for operator $L$  are defined as
$$
S^\delta_R(L)f(x)=\int_{0}^{R^2}\Big(1-\frac{\lambda}{R^2}\Big)^\delta \dd E_L(\lambda)f(x),\  \    x\in X.
$$
We then consider  the following Stein-square function   associated to the operator $L$
$$\mathcal{G}_\delta(L)f(x)=c_\delta\left(\int_{0}^{\infty}\Big|R\frac{\partial }{\partial R}S_R^{\delta+1}(L)f(x)\Big|^2 \frac{\dd R}{R}\right)^{\frac12}.$$
\begin{lemma}\label{Steinsquare}
 Suppose that   $L$ is a self-adjoint operator that satisfies   Gaussian estimate  \eqref{ge} and Plancherel condition \eqref{e1.4} for some $q\in[2,\infty]$.
 \begin{itemize}

\item[(i)] If $2\leq p<\infty$, and
 $$
 \delta>\Big(n-\frac2q\Big)\, \Big(\frac12-\frac1p\Big)-\frac12,
 $$
 then there exists a constant $C>0$ such that
 $$\left\|\mathcal{G}_\delta(L)(f)\right\|_{L^p(d\mu)}\leq C\|f\|_{L^p(d\mu)}.$$

\item[(ii)] If $1< p\leq 2$, and
 $$
 \delta>\Big(n+1-\frac2q\Big)\, \Big (\frac1p-\frac12\Big)-\frac12,
 $$
 then there exists a constant $C>0$ such that
 $$
 \left\|\mathcal{G}_\delta(L)(f)\right\|_{L^p(d\mu)}\leq C\|f\|_{L^p(d\mu)}.
 $$

 \end{itemize}
\end{lemma}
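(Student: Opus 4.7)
My plan is to recast the Stein square function as a vector-valued spectral multiplier and then apply Stein's complex interpolation between two endpoint estimates. By the spectral theorem, differentiation of the Bochner--Riesz means yields
\[
R\,\frac{\partial}{\partial R}S_R^{\delta+1}(L)f = \phi_\delta(R^{-2}L)f, \qquad \phi_\delta(\lambda)=2(\delta+1)\lambda(1-\lambda)_+^\delta,
\]
and the substitution $t=R^{-2}$ rewrites $\mathcal{G}_\delta(L)f$ up to a constant as $\bigl(\int_0^\infty|\phi_\delta(tL)f|^2\,dt/t\bigr)^{1/2}$. I would embed this into the analytic family $\{\mathcal{G}_z(L)\}_{\Re z>-1}$, track the polynomial growth of each operator norm in $|\Im z|$, and then interpolate on a strip.

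Next I would prove two endpoint bounds. For the $L^2$ endpoint on the line $\Re z=-1/2+\varepsilon$, the spectral theorem gives
\[
\|\mathcal{G}_z(L)f\|_2^2 = \int_X\Bigl(\int_0^\infty|\phi_z(t\lambda)|^2\,\frac{dt}{t}\Bigr)\,d\langle E_Lf,f\rangle(\lambda),
\]
and the inner integral reduces via $s=t\lambda$ to a Beta-type integral independent of $\lambda$, finite (with admissible growth in $|\Im z|$) precisely when $\Re z>-1/2$. For the $L^p$ endpoint at $\Re z=\sigma_0$ large, I would perform a dyadic Littlewood--Paley decomposition $1=\sum_j\zeta(2^{-2j}t)$ of the $t$-variable and apply Khintchine's inequality
\[
\Bigl\|\Bigl(\sum_j|\phi_z(2^{-2j}L)\zeta_j(L)f|^2\Bigr)^{1/2}\Bigr\|_p \lesssim \mathbb{E}\Bigl\|\sum_j\varepsilon_j\phi_z(2^{-2j}L)\zeta_j(L)f\Bigr\|_p,
\]
reducing, for each sign pattern, to a single spectral multiplier amenable to Proposition~\ref{thmbydo}. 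Since the only singularity of $\phi_z$ is $(1-\lambda)_+^z$, the norm $\|\phi_z\|_{W^q_s}$ is finite whenever $s<\Re z+1/q$; combined with the H\"ormander threshold $s>n/2$, this gives $L^p$-boundedness for all $1<p<\infty$ on the line $\Re z=n/2-1/q+\varepsilon$.

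Stein's complex interpolation applied between the $L^2$ line $\Re z=-1/2+\varepsilon$ and the $L^p$ line $\Re z=n/2-1/q+\varepsilon$ then delivers part~(ii) directly, with interpolated threshold $\delta>(n+1-2/q)(1/p-1/2)-1/2$ for $1<p\leq 2$. For part~(i) the same scheme yields only $\delta>(n+1-2/q)(1/2-1/p)-1/2$, which is weaker than the target by the extra $(1/2-1/p)$ term, so the $p\geq 2$ endpoint must be sharpened. The natural way is to interpolate against a refined Bochner--Riesz bound: under the Plancherel hypothesis~\eqref{e1.4}, $S_R^{\delta_0}(L)$ is known to be $L^p$-bounded for $\delta_0>(n-2/q)(1/2-1/p)-1/2$ when $p\geq 2$, and via the standard integral representation of $S_R^{\delta_0}$ in terms of $r\partial_r S_r^{\delta+1}$ together with a Cauchy--Schwarz argument in $r$, this Bochner--Riesz bound transfers to the desired $L^p$-bound for $\mathcal{G}_\delta$. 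The main obstacle, in my view, is securing this refined Bochner--Riesz endpoint uniformly in the analytic parameter: it rests on weighted off-diagonal $L^2$ kernel estimates for $S_R^{\delta+1}(L)$ sharper than what Proposition~\ref{thmbydo} yields, together with a delicate finite-speed-of-propagation localisation in which uniform control of the constants in $R$ (and in $|\Im z|$, to feed the interpolation) must be tracked.
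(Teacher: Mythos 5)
Your overall architecture---dyadic decomposition of the Bochner--Riesz kernel, an $L^2$ bound for $\Re z>-1/2$ via the spectral theorem, and Stein's complex interpolation---is exactly the paper's strategy for part (i), and your identity $R\partial_R S_R^{\delta+1}(L)=2(\delta+1)(L/R^2)(1-L/R^2)_+^{\delta}$ and the interpolation arithmetic are correct. For part (ii) the paper simply invokes \cite[Theorem 1.1]{CDY2013}; your proposed derivation (Khintchine plus Proposition~\ref{thmbydo} on the line $\Re z>n/2-1/q$, then interpolation with $p_1\downarrow 1$) does reproduce the stated exponent $(n+1-2/q)(1/p-1/2)-1/2$, though you would still need to handle the off-diagonal terms (for $t\approx 2^{-2j}$ the multiplier $\phi_z(t\lambda)$ is supported on the whole ball $\lambda\lesssim 2^{2j}$, not on an annulus, so $\phi_z(2^{-2j}L)\zeta_k(L)\neq 0$ for every $k\le j$) and the continuous $t$-average; these steps are standard but not free.

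The genuine gap is in part (i). You correctly observe that the naive interpolation endpoint $\Re z=n/2-1/q$ loses a factor $(1/2-1/p)$, but your proposed repair does not work as stated: the integral representation of $S_R^{\delta_0}$ in terms of $r\partial_r S_r^{\delta+1}$ combined with Cauchy--Schwarz in $r$ transfers square-function bounds \emph{to} Bochner--Riesz bounds (it yields the pointwise domination $|S_R^{\delta_0}f|\lesssim \mathcal{G}_\delta f$, exactly as in the proof of Theorem~\ref{Theorem2}), not the converse; $L^p$-boundedness of $S_R^{\delta_0}(L)$ does not by itself give $L^p$-boundedness of $\mathcal{G}_\delta(L)$. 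The missing ingredient---which you flag as ``the main obstacle'' but do not supply---is a square-function estimate for spectral multipliers supported in the thin set $\{\,1-\lambda/R^2\approx 2^{-j}\,\}$ with operator norm $O(2^{j(n/2-1/2-1/q)})$ for $p\ge 2$; this is \cite[Proposition 4.2]{CLSY2020}, which the paper cites, and it is precisely where the extra $1/2$-gain (coming from the $L^2$-average in $R$ under the Plancherel condition \eqref{e1.4}) enters before interpolating with the $L^2$ line $\delta>-1/2$. Without that input, your scheme only yields part (i) with the weaker threshold $(n+1-2/q)(1/2-1/p)-1/2$.
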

\begin{proof}
For the case of $1<p\leq 2$, it is proved in \cite[Theorem 1.1]{CDY2013}. For the case of $2\leq p<\infty$,
the proof is essentially included in \cite{CLSY2020}. For the completeness, we give the detail of the proof  in the following.

Using dyadic decomposition, we write
$$x^\delta_+=\sum_{j\in\ZZ}2^{-j\delta }\phi(2^jx),$$
where $\phi\in C_c^\infty(1/4,1)$. Thus
\begin{align*}
\left(1-|\xi|^2\right)_+^{\delta}|\xi|^2&=\phi_0(\xi)+\sum_{k\geq1}2^{-j\delta}\phi\left (2^{j}(1-|\xi|^2)\right)\\
&\ \ +\sum_{j\geq 1}2^{-j(\delta+1)}\phi\left(2^{j}(1-|\xi|^2)\right)  2^{j}(|\xi|^2-1),
\end{align*}
where $\phi_0(\xi)=\sum_{j\leq 0}2^{-j\delta}\phi({2^j(1-|\xi|^2)})|\xi|^2$.  Let $\widetilde{\phi}(\xi)=-\phi(\xi)\xi.$
By Minkowski's inequality,
\begin{align*}
  \|\mathcal{G}_\delta(L)(f)\|_{L^p(d\mu)} & \leq \Big\|\Big(\int_{0}^{\infty}\big|\phi_0(\frac{\sqrt{L}}{R})f\big|^2\frac{\dd R}{R}\Big)^{\frac12}\Big\|_{L^p(d\mu)}\\
  &+   \sum_{j\geq 1}2^{-j\delta} \Big\|\Big(\int_{0}^{\infty} \Big| \phi\Big (2^{j} \big (1-\frac{L}{R^2}\big)\Big)  f\,\Big|^2\frac{\dd R}{R}\Big)^{\frac12}\Big\|_{L^p(d\mu)}\\
  &+ \sum_{j\geq 1}2^{-j(\delta+1)} \Big\|\Big(\int_{0}^{\infty} \Big| \widetilde{ \phi} \Big(2^{j}\big(1-\frac{L}{R^2}\big)\Big)  f\,\Big|^2\frac{\dd R}{R}\Big)^{\frac12}\Big\|_{L^p(d\mu)}.
\end{align*}
According to the $L^p$-boundedness of  square function in \cite[Proposition~4.2]{CLSY2020}, we have that for $\delta>\frac{n}{2}-\frac12-\frac1q$,
\begin{align}\label{sqq1}
\left\|\mathcal{G}_\delta(L)(f)\right\|_{L^p(d\mu)} & \leq C\bigg (1+\sum_{j\geq 1}2^{-j(\delta-\frac{n}{2}+\frac12+\frac1q)}\bigg)\,\|f\|_{L^p(d\mu)}\leq C\|f\|_{L^p(d\mu)}.
  \end{align}
By spectral theory, if $\delta>-1/2$,
\begin{align}\label{sqq2}
\left\|\mathcal{G}_\delta(L)(f)\right\|_{L^2(d\mu)} = C_\delta\|f\|_{L^2(d\mu)}.
  \end{align}
From Stein's complex interpolation with \eqref{sqq1} and \eqref{sqq2},  we see that
\begin{align*}
\|\mathcal{G}_\delta(L)(f)\|_{L^p(d\mu)} \leq C\|f\|_{L^p(d\mu)}
  \end{align*}
  provided that
  $$
  \mu >\Big(n-\frac2q\Big)\,  \Big(\frac12-\frac1p\Big)-\frac12.
  $$
This completes the proof of Lemma~\ref{Steinsquare}.
\end{proof}

\vspace{0.3cm}
%To prove Theorem~\ref{Theorem2}, let us  introduce some related  operators. Following Carbery~\cite{Car} or Dapper and Trebels~\cite{DT1985}, for $0<\mu<1$ and a function $g\in L^1_{\mathrm{loc}}(\RR)$ which vanishes on $(-\infty,0)$, the fractional integrals are defined as
%\begin{align*}
%I_\omega^\mu(g)(t):=
%  \begin{cases}
%    \frac{1}{\Gamma(\mu)}\int^{\omega}_t(s-t)^{\mu-1}g(s)\dd s, & \mbox{if\  } t<\omega;\\
%    0, & \mbox{if\ } t\geq\omega.
%  \end{cases}
%\end{align*}
%Define the fractional derivatives of order $\gamma$ by
%\begin{align*}
%  g^{(\gamma)}(t):=
%  \begin{cases}
%    \lim_{\omega\rightarrow \infty}-\frac{d}{dt} I_\omega^{1-\gamma}(g)(t), & \mbox{if\  } 0<\gamma<1;\\
%(\frac{d}{dt})^{[\gamma]}g^{(\gamma-[\gamma])}(t), & \mbox{if\ } \gamma>1,
%  \end{cases}
%\end{align*}
%where $[\gamma]$ is the largest integer less than $\gamma$. If $\gamma\in\NN$, then the fractional integrals denote the classical derivatives.

Following the notation as in \cite{Car},  the fractional derivatives $m^{(\alpha)}$ of function $m$ of order $\alpha$   is defined by
\begin{eqnarray}\label{eyyy}
 \mathcal{F}(m^{(\alpha)})(\xi)=(-i\xi)^{\alpha}\mathcal{F}m(\xi).
\end{eqnarray}
For $\alpha>1/2$, the function space $L_\alpha^2(\RR_+)$  is defined as the completion of the $C^\infty$ functions of compact support in $(0,\infty)$ under the norm
\begin{align}\label{Cares1}
\|m\|^2_{L_\alpha^2}=\int_0^\infty \left|s^{\alpha+1}\Big(\frac{m(s)}{s}\Big)^{(\alpha)}\right|^2\frac{ds}{s}.
\end{align}
If multiplier function $m$ satisfies \eqref{eq1.2.3} in Theorem~\ref{Theorem2} for some $s>0$, then $m\in L_s^2(\RR_+)$. More precisely, we have the following lemma.

\begin{lemma}\label{lemmaTheorem2}
If the function $m$ on $(0,\infty)$ satisfies
$$
\sum_{k\in\ZZ}\left\|\phi m(2^k\cdot)\right\|_{W^2_{\mu}}^2<\infty
$$
for some $\mu>0$, then $m\in L_\mu^2(\RR_+)$ and
$$
\|m\|^2_{L_\alpha^2}=\int_0^\infty \left|s^{\mu+1}\Big(\frac{m(s)}{s}\Big)^{(\mu)}\right|^2\frac{ds}{s}\leq C_\mu \sum_{k\in\ZZ}\left\|\phi m(2^k\cdot)\right\|_{W^2_{\mu}}^2.
$$
\end{lemma}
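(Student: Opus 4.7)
The strategy is to perform a dyadic decomposition of the integral defining $\|m\|_{L_\mu^2}^2$, reduce each dyadic piece by rescaling, and then control it by the Sobolev norm of $\phi m(2^k\cdot)$ using the classical relation between fractional derivatives and $W^2_\mu$.

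First I would write
\[
\|m\|_{L_\mu^2}^2 \;=\; \sum_{k\in\ZZ}\int_{2^k}^{2^{k+1}} s^{2\mu+1}\bigl|(m/\cdot)^{(\mu)}(s)\bigr|^2\,ds \;=:\; \sum_{k\in\ZZ} I_k.
\]
In the $k$-th term I substitute $s=2^k t$, $t\in[1,2]$. Setting $g(s)=m(s)/s$ and $g_k(t)=g(2^k t)$, the Fourier-transform definition~\eqref{eyyy} of the fractional derivative gives the scaling identity $g^{(\mu)}(2^k t)=2^{-k\mu}g_k^{(\mu)}(t)$. A short computation then converts $I_k$ into
\[
I_k \;=\; \int_1^2 t^{2\mu+1}\bigl|(\tilde m_k/\cdot)^{(\mu)}(t)\bigr|^2\,dt, \qquad \tilde m_k(t):=m(2^k t),
\]
with no residual scaling constants. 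Since $t\in[1,2]$, this is comparable to $\bigl\|(\tilde m_k/\cdot)^{(\mu)}\bigr\|_{L^2([1,2])}^2$.

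The core step is then to show
\[
\bigl\|(\tilde m_k/\cdot)^{(\mu)}\bigr\|_{L^2([1,2])} \;\lesssim\; \|\phi\tilde m_k\|_{W^2_\mu} + \text{contributions from neighboring indices}.
\]
For this, fix an auxiliary bump $\eta\in C_c^\infty(\RR_+)$ with $\eta\equiv 1$ on $[1,2]$ and $\mathrm{supp}\,\eta$ contained in a slightly larger interval. Split $\tilde m_k/t=\eta\tilde m_k/t+(1-\eta)\tilde m_k/t$. For the main piece, Plancherel gives $\bigl\|(\eta\tilde m_k/t)^{(\mu)}\bigr\|_{L^2(\RR)}\lesssim \|\eta\tilde m_k/t\|_{W^2_\mu}$; since $\eta(t)/t\in C_c^\infty(\RR_+)$ acts as a bounded multiplier on $W^2_\mu$, this is in turn $\lesssim \|\eta\tilde m_k\|_{W^2_\mu}$. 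A standard partition-of-unity argument then compares $\|\eta\tilde m_k\|_{W^2_\mu}$ to $\sum_{|j-k|\le C}\|\phi\tilde m_j\|_{W^2_\mu}$, since both $\eta$ and $\phi$ are smooth bumps on dyadically equivalent intervals. Summing in $k$ and absorbing the finite-overlap factor yields the desired bound.

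The main obstacle is the nonlocal nature of the fractional derivative operator: localizing $s$ to $[2^k,2^{k+1}]$ on the spatial side does not commute with $(\cdot)^{(\mu)}$, so the tail piece $\bigl((1-\eta)\tilde m_k/t\bigr)^{(\mu)}$ evaluated at $t\in[1,2]$ must be controlled separately. Because $(1-\eta)\tilde m_k/t$ vanishes on a neighborhood of $[1,2]$, the Riesz-type kernel representing $(\cdot)^{(\mu)}$ is bounded away from its singularity in this regime, and the tail decays like a negative power of the distance to $[1,2]$. These tails can then be summed across dyadic shells by Schur's test (or Cauchy–Schwarz with the rapidly decaying kernel), producing a contribution controlled by $\sum_k \|\phi\tilde m_k\|_{W^2_\mu}^2$, which combined with Step~3 completes the estimate.
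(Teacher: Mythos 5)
Your route is genuinely different from the paper's: the paper first converts the Carbery norm globally into $\int_{\RR}\left|(1-\Delta)^{\mu/2}[m(e^{\cdot})](t)\right|^2dt$ via the Mellin-transform identity (the Mellin transform of $s^{\mu+1}(m(s)/s)^{(\mu)}$ is essentially $(1+it)^{\mu}$ times that of $m$), and only then localizes, commuting the bump with $(1-\Delta)^{\mu/2}$ by pseudodifferential calculus, where the off-diagonal kernel decays like $(1+|x-y|)^{-2}$ and the tails sum trivially. You localize first, in the original multiplicative variable, which forces you to localize the operator $(\cdot)^{(\mu)}$ itself. Your rescaling step (no residual constants) and the main-piece estimate via Plancherel are fine, modulo the usual non-degeneracy of $\phi$ needed to run the partition-of-unity comparison (a point the paper's own proof also glosses over).

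The genuine gap is in the tail. The convolution kernel of $(\cdot)^{(\mu)}$ is homogeneous of degree $-1-\mu$, so for $t\in[1,2]$ it is indeed ``a negative power of the distance,'' but that gives \emph{no decay} on the side $s\in(0,1)$: there the distance to $[1,2]$ is at most $2$, the kernel is bounded below, and each dyadic shell $[2^{l},2^{l+1}]$ with $l<0$ contributes roughly $\sup_{\mathrm{shell}\ k+l}|m|$ to $\bigl((1-\eta)\tilde m_k/\cdot\bigr)^{(\mu)}(t)$ (the measure $ds/s$ of every shell is $\log 2$). Infinitely many shells accumulate at the origin, so the Schur/Cauchy--Schwarz summation you propose diverges; for instance $\|\phi m(2^{j}\cdot)\|_{W^2_{\mu}}\approx(1+|j|)^{-1}$ is admissible, yet the absolute-value bound for the backward tail is $\sum_{j<k}(1+|j|)^{-1}=\infty$. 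What rescues the argument --- and is absent from your write-up --- is that \eqref{eyyy} together with the reproducing formula \eqref{fde} defines the \emph{one-sided} Weyl derivative: $(-i\xi)^{\mu}$ extends holomorphically to a half-plane, so its inverse Fourier transform is supported on a half-line and $\bigl((1-\eta)\tilde m_k/\cdot\bigr)^{(\mu)}(t)$ for $t\in[1,2]$ sees only $s\geq t$. Then only the forward shells $l\geq 1$ contribute, with weights $O(2^{-l\mu})\in\ell^{1}$, and Young's inequality $\ell^{1}\ast\ell^{2}\to\ell^{2}$ closes the estimate. Without invoking this one-sidedness (or an equivalent cancellation) the tail step fails as written; with it, your decomposition does yield a correct alternative to the Mellin-transform argument. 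A minor additional point: $m(s)/s$ extended by zero need not be locally integrable at $0$, so applying the whole-line Fourier definition to $(1-\eta)\tilde m_k/\cdot$ needs a word of justification that the Mellin approach avoids.
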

\begin{proof}
The Mellin transform of F is defined as
 $$
 \mathbf{m}(F)(t):=\frac{1}{2\pi}\int_0^\infty F(\lambda)\lambda^{-1-it}d\lambda,\quad t\in \RR.
 $$
 A simple computation shows that the Mellin transform of $s^{\mu+1}\Big(\frac{m(s)}{s}\Big)^{(\mu)}$ is essentially $(1+it)^\mu$ times the Mellin transform of $m$, see also \cite[Page 53]{Car}. Note that we have the relationship between Mellin transform and Fourier transform
 $$
 \mathbf{m}(F)(t)=\mathcal F(F(e^{\cdot}\,))(t).
 $$
 Thus by Plancherel equality
   \begin{align*}
  \int_0^\infty \left|s^{\mu+1}\Big(\frac{m(s)}{s}\Big)^{(\mu)}\right|^2\frac{ds}{s}
  &= \int_{-\infty}^\infty \left| \mathbf{m} [s^{\mu+1}\Big(\frac{m(s)}{s}\Big)^{(\mu)}](t)\right|^2dt\\
 % &\leq C_\mu\int_{-\infty}^\infty \left|(1+t^2)^{\mu/2} \mathbf{m} [m](t)\right|^2dt\\
  &\leq C_\mu\int_{-\infty}^\infty \left|(1-\Delta)^{\mu/2}[m(e^\cdot)](t)\right|^2dt.
     \end{align*}
Using dyadic decomposition,
  \begin{align}\label{e301.1}
  \int_0^\infty \left|s^{\mu+1}\Big(\frac{m(s)}{s}\Big)^{(\mu)}\right|^2\frac{ds}{s}
  % &\leq C_\mu\int_{-\infty}^\infty \left|(1-\Delta)^{\mu/2}[m(e^\cdot)](t)\right|^2dt\nonumber\\
   &\leq C_\mu\sum_{j\in \ZZ}\int_{j}^{j+1} \left|(1-\Delta)^{\mu/2}[m(e^\cdot)](t)\right|^2dt\nonumber\\
   &\leq C_\mu\sum_{j\in \ZZ}\int_{j}^{j+1} \left|\phi(e^{t-j})(1-\Delta)^{\mu/2}[m(e^\cdot)](t)\right|^2dt\nonumber\\
  % &=C_\mu\sum_{j\in \ZZ}\int_{0}^{1} \left|\phi(e^{t})(1-\Delta)^{\mu/2}[m(e^\cdot)](t+j)\right|^2dt\nonumber\\
    &=C_\mu\sum_{j\in \ZZ}\int_{0}^{1} \left|\phi(e^{t})(1-\Delta)^{\mu/2}[m(e^{\cdot-j})](t)\right|^2dt.
     \end{align}
%If $\mu$ is an integer, then the derivatives are local, and apparently
%$$
% \int_0^\infty \left|s^{\mu+1}\Big(\frac{m(s)}{s}\Big)^{(\mu)}\right|^2\frac{ds}{s}
% \leq C_\mu \sum_{j\in \ZZ} \| \phi(e^{\cdot}) m(e^{\cdot-j})\|_{W_\mu^2}^2\leq  \sum_{k\in\ZZ}\left\|\phi m(2^k\cdot)\right\|_{W^2_{\mu}}^2.
%$$
%If $\mu$ is not an integer,
$\phi(e^{t})$ and $(1-\Delta)^{\mu/2}$ can be considered as  pseudo-differential operators of order $0$ and $\mu$.
It follows from the symbolic calculus (see for example \cite[Theorem 2, page 237]{Stein}) that for big enough $N$
 \begin{align}\label{peudodiff}
 \phi(e^{t})(1-\Delta)^{\mu/2}=(1-\Delta)^{\mu/2}\phi(e^{t})-\sum_{k=0}^{N} \frac{i^k}{k!}\partial_\xi^k [(1+\xi^2)^{\mu/2}]\partial_t[\phi(e^{t})](x,D)+ {\rm Tail}_N(x,D)
 \end{align}
 where ${\rm Tail}_N(x,\xi)$ is symbol of order $\mu-N$.
 For the first term on the right hand of \eqref{peudodiff}, we directly have
 \begin{align}\label{e301.2}
 \sum_{j\in \ZZ}\int_{0}^{1} \left|(1-\Delta)^{\mu/2}[\phi(e^{t})m(e^{\cdot-j})](t)\right|^2dt
 \leq C_\mu \sum_{j\in \ZZ} \| \phi(e^{\cdot}) m(e^{\cdot-j})\|_{W_\mu^2}^2\leq  \sum_{k\in\ZZ}\left\|\phi m(2^k\cdot)\right\|_{W^2_{\mu}}^2.
 \end{align}
For the tail part on the right hand of \eqref{peudodiff}, we integrate by parts
  \begin{align*}
  |K_{{\rm Tail}_N(x,D)}(x,y)|=|\int_\RR e^{i(x-y)\xi} {\rm Tail}_N(x,\xi)d\xi|\leq C(1+|x-y|)^{-2}.
  \end{align*}
  Then  we apply the Cauchy-Schwarz inequality to get
    \begin{align}\label{e301.3}
    \sum_{j\in \ZZ}\int_{0}^{1} \left|{\rm Tail}_N(x,D)[m(e^{\cdot-j})](t)\right|^2dt&=
    \sum_{j\in \ZZ}\int_{0}^{1} \left|\int_\RR K_{{\rm Tail}_N(x,D)}(t,s)m(e^{s-j})ds \right|^2dt\nonumber\\
   % &= \sum_{j\in \ZZ}\int_{0}^{1} \left|\int_\RR K_{{\rm Tail}_N(x,D)}(t,s+j)m(e^{s})ds \right|^2dt\nonumber\\
%    &\leq \sum_{j\in \ZZ}\int_{0}^{1} \left|\int_\RR (1+|t-s-j|)^{-2}|m(e^{s})|ds \right|^2dt\nonumber\\
    &\leq \sum_{j\in \ZZ}\int_{0}^{1} \int_\RR (1+|t-s-j|)^{-2}|m(e^{s})|^2ds dt\nonumber\\
    %&=\int_{0}^{1} \int_\RR \sum_{j\in \ZZ}(1+|t-s-j|)^{-2}|m(e^{s})|^2ds dt\nonumber\\
    &\leq C\int_\RR |m(e^{s})|^2ds \nonumber\\
    &\leq C\sum_{j\in \ZZ} \int_{j}^{j+1} \big|\phi(e^{s-j})[m(e^s)]\big|^2ds\nonumber\\
    &\leq C\sum_{j\in \ZZ} \int_{0}^{1} \big|\phi(e^{s})[m(e^{s+j})]\big|^2ds\nonumber\\
    &\leq C\sum_{k\in\ZZ}\left\|\phi m(2^k\cdot)\right\|_{W^2_{\mu}}^2.
     \end{align}
Similarly, we can estimate the terms in the summation on the right hand of \eqref{peudodiff}.
Then combining the estimates of  \eqref{e301.1}--\eqref{e301.3}, we complete the proof of Lemma~\ref{lemmaTheorem2}.
\end{proof}

\vspace{0.5cm}

Now we can prove Theorem~\ref{Theorem2}.

\begin{proof}[{\bf Proof of Theorem~\ref{Theorem2}}]

 If $m\in L_\alpha^2(\RR_+)$, it follows from \cite[Theorem 4]{Car}, we have
\begin{align}\label{fde}
 \frac{m(\lambda)}{\lambda}=C_\mu\int_{0}^{\infty}(s-\lambda)_+^{\mu-1}\Big(\frac{m(s)}{s}\Big)^{(\mu)}\dd s,\ \ \mathrm{\forall} \ \lambda>0 .
\end{align}
  Therefore, for $t>0$
   $$
  m(t\lambda)=C_\mu\int_{0}^{\infty}\frac{t\lambda}{s} \Big(1-\frac{t\lambda}{s}\Big)_+^{\mu-1}s^\mu\Big(\frac{m(s)}{s}\Big)^{(\mu)}\dd s.
  $$
  By spectral theory,
   \begin{align*}
    m(tL)f(x)= C_\mu\int_{0}^{\infty} \frac{tL}{s} \Big(1-\frac{tL}{s}\Big)_+^{\mu-1}f(x)s^\mu\Big(\frac{m(s)}{s}\Big)^{(\mu)}\dd s.
  \end{align*}
 We apply the Cauchy-Schwarz inequality to get
    \begin{align*}
   \big|m(tL)f(x)\big|&\leq C_\mu \left(\int_0^\infty \left|\frac{tL}{s} \Big(1-\frac{tL}{s}\Big)_+^{\mu-1}f(x)\right|^2\frac{ds}{s}\right)^{\frac12} \left(\int_0^\infty \left|s^{\mu+1}\Big(\frac{m(s)}{s}\Big)^{(\mu)}\right|^2\frac{ds}{s}\right)^{\frac12}\\
   &=C_\mu \left(\int_0^\infty \left|\frac{L}{s} \Big(1-\frac{L}{s}\Big)_+^{\mu-1}f(x)\right|^2\frac{ds}{s}\right)^{\frac12} \left(\int_0^\infty \left|s^{\mu+1}\Big(\frac{m(s)}{s}\Big)^{(\mu)}\right|^2\frac{ds}{s}\right)^{\frac12}\\
   &=C_\mu \mathcal{G}_{\mu-1}(L)f(x) \left(\int_0^\infty \left|s^{\mu+1}\Big(\frac{m(s)}{s}\Big)^{(\mu)}\right|^2\frac{ds}{s}\right)^{\frac12}.
     \end{align*}
 That is
   \begin{align*}
   \sup_{t>0} \big|m(tL)f(x)\big|\leq C_\mu \mathcal{G}_{\mu-1}(L)f(x) \left\|m\right\|_{L_\mu^2}.
     \end{align*}
  Thus according to Lemma~\ref{lemmaTheorem2}
 \begin{align*}
  \bigg \|\sup_{t>0}  \left|m(tL)f\right| \bigg \|_{L^p(d\mu)}\leq C_\mu \left\|\mathcal{G}_{\mu-1}(L)f\right\|_{L^p(d\mu)} \left(\sum_{k\in\ZZ}\left\|\phi m(2^k\cdot)\right\|_{W^2_{\mu}}^2\right)^{1/2}.
     \end{align*}
 Combining the estimates for Stein's square function in Lemma~\ref{Steinsquare},
we complete the proof of Theorem~\ref{Theorem2}.

\end{proof}

\section{ Some estimates for the square function  for the dyadic martingale and the Littlewood-Paley operators }\label{sec:4}
\setcounter{equation}{0}

In this section, suppose that  there exists an L-harmonic function $h$ such that the operator satisfies conditions \rm{(H-1)} and \eqref{LUGh}.
 Let  $d\upsilon=h^2d\mu$ and hence
 the space $(X, d, \upsilon)$ satisfies the doubling property by Lemma~\ref{Lemma9}.
In the following     the Hardy-Littlewood maximal function $\mathcal{M}_r^\upsilon$ in the space $(X, d, \upsilon)$ is  given by
$$
\mathcal{M}^\upsilon_r(f)(x):=\sup_{\substack{x\in B}}\left(\frac{1}{\upsilon(B)}\int_B|f(y)|^rd\upsilon(y)\right)^{\frac1r}.
$$
For simplicity we will write $\mathcal{M}^\upsilon$  in place of
 $\mathcal{M}_1^\upsilon$.

Recall that $\mathbb{S}^\upsilon$ is the square function  for the dyadic martingale
in \eqref{e2.11}.  Our main result of this section is the following proposition.
%In Section 4 below,
% the following lemma will play an important role in the proof of
% Theorem~\ref{Theorem1}.

\begin{proposition}\label{Lemma2}
		Suppose that   $L$ is a self-adjoint operator that satisfies
Gaussian estimate \eqref{ge} and \eqref{e1.4} for some $q\in[2,\infty]$,  and there exists an L-harmonic function $h$ such that  the operator $L$ satisfies conditions \rm{(H-1)} and \eqref{LUGh}.   	Let  $d\upsilon=h^2d\mu$.
	Let $1< r<2$ and for every  bounded Borel function $m$ such that
	$$
	\sup_{t>0}\|\phi m(t\cdot)\|_{W^q_{s}} <\infty
	$$
	for some  $s>n/r$,  then for any $p\in (r,\infty)$,
	$$
	\mathbb{S}^\upsilon\left(h^{-1}m(L)(f)\right)(x)\leq C\Big(\sup_{t>0}\|\phi m(t\cdot)\|_{W^q_{s}}+|m(0)|\Big) \, \bigg(\sum_{j\in\ZZ}\Big|\mathfrak{M}^\upsilon_r\left(h^{-1}U_{j,L}^{2/r} (f)\right)(x)\Big|^2\bigg)^{\frac12},
	$$
	where $\mathfrak{M}_r^\upsilon:=\mathcal{M}^\upsilon_r\circ \mathcal{M}^\upsilon_r$ and $U_{j,L}^{2/r} (f)$ is given in \eqref{ujl}.
\end{proposition}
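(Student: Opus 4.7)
The plan is to decompose $h^{-1}m(L)f$ into Littlewood--Paley pieces via a Calder\'on reproducing formula, estimate the martingale difference $\mathbb{D}^\upsilon_k$ of each piece with a geometric gain in the mismatch $|j-j_k|$ between the Littlewood--Paley scale $2^{-j}$ and the cube scale $\delta^k\sim 2^{-j_k}$, and close by a Schur-type summation in $j$. Pick an even $\psi\in C_c^\infty(\RR)$ with $\int_\RR\psi=0$, $\supp\psi\subset[-1,1]$ and $\psi\not\equiv 0$, and fix $N\in\NN$ large enough that $c:=\int_0^\infty u^{2N}e^{-u^2}\Psi(u)\,du/u\neq 0$. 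Since every operator below is a function of $L$ and therefore commutes,
\[
h^{-1}(x)m(L)f(x)=c^{-1}\sum_{j\in\ZZ}H_j(x),\quad H_j(x):=\int_{2^{-j-1}}^{2^{-j}} h^{-1}(x)\,\mathcal{O}_t\,m(L)f(x)\,\frac{dt}{t},
\]
with $\mathcal{O}_t:=(t^2L)^{N}e^{-t^2L}\Psi(t\sqrt L)$. Write $B:=\sup_{t>0}\|\phi m(t\cdot)\|_{W^q_s}+|m(0)|$.

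Two complementary pointwise estimates on each $H_j$ drive the argument. Applying Lemma~\ref{Lemma3} with $F=m$ and $s_0$ satisfying $n/r<s_0<s$ (possible because $s>n/r>n/2$), together with Cauchy--Schwarz in $t$, gives the size bound $|H_j(x)|\leq CB\,h^{-1}(x)\,U^{2/r}_{j,L}(f)(x)$. For the smoothness side I would pass to the Doob kernel $\mathcal{K}^m_t(x,y):=h^{-1}(x)h^{-1}(y)K_{\mathcal{O}_t m(L)}(x,y)$ in the measure $d\upsilon$ and establish an $L^2$-H\"older estimate
\[
\Big(\int\Big(\tfrac{t+d(x,y)}{t}\Big)^{2s_0}\big|\mathcal{K}^m_t(x,y)-\mathcal{K}^m_t(x',y)\big|^2 d\upsilon(y)\Big)^{1/2}\leq \tfrac{CB}{\upsilon(B(x,t))^{1/2}}\Big(\tfrac{d(x,x')}{t}\Big)^{\gamma_0}
\]
valid for $d(x,x')\leq t$ and some $\gamma_0>0$. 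This is obtained by factoring off a copy of $(t^2\mathcal{L}/2)e^{-t^2\mathcal{L}/2}$ in the Doob picture, extracting its H\"older regularity from \eqref{Tf2} of Lemma~\ref{Dh}(iii), and controlling the residual factor with the Plancherel/$L^2$ scheme used in Lemma~\ref{Lemma3}. The Plancherel hypothesis \eqref{e1.4} on $L$ transfers to the Doob kernel via the comparability $\upsilon(B(x,t))\sim h(x)^2\mu(B(x,t))$ forced by \eqref{LUGh}.

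With these ingredients I bound $\|\mathbb{D}^\upsilon_k H_j\|_{L^\infty(Q_k(x),d\upsilon)}$ in two regimes. When $2^{-j}\geq\delta^k$, any $x',x''\in Q_k(x)$ satisfy $d(x',x'')\leq C\delta^k\leq Ct$, and the H\"older kernel estimate combined with Cauchy--Schwarz in $y$ and integration in $t$ yields an oscillation of order $(\delta^k/2^{-j})^{\gamma_0}h^{-1}(x)U^{2/r}_{j,L}(f)(x)$. When $2^{-j}<\delta^k$, I instead use the cancellation $\int_X\mathcal{K}^m_t(x,y)\,d\upsilon(y)=\mathcal{O}_t m(\mathcal{L})(\mathbf{1})(x)=0$, which follows from $\mathcal{L}\mathbf{1}=0$ (Lemma~\ref{Dh}(ii)) together with the factor $(t^2\mathcal{L})^N$ annihilating $\mathbf{1}$ for $N\geq 1$; by self-adjointness the same identity holds with the roles of $x$ and $y$ swapped, so the averaged kernel $\bar{\mathcal{K}}^Q_t(y):=\upsilon(Q)^{-1}\int_Q\mathcal{K}^m_t(x',y)d\upsilon(x')$ concentrates essentially on a boundary strip of $Q$ of $\upsilon$-measure $\lesssim(t/\delta^k)^{\rho}\upsilon(Q)$ by Lemma~\ref{le2.1}(6), which extracts the gain $(2^{-j}/\delta^k)^{\gamma_0}$. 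Converting pointwise majorants into $L^r$-averages via Jensen in $d\upsilon$ and combining both regimes yields
\[
\|\mathbb{D}^\upsilon_k H_j\|_{L^\infty(Q_k(x),d\upsilon)}\leq CB\,\theta^{|j-j_k|}\,\mathfrak{M}^\upsilon_r\!\big(h^{-1}U^{2/r}_{j,L}(f)\big)(x)
\]
for some $\theta\in(0,1)$; the outer $\mathcal{M}^\upsilon_r$ in $\mathfrak{M}^\upsilon_r=\mathcal{M}^\upsilon_r\circ\mathcal{M}^\upsilon_r$ enters because the essential supremum of $\mathcal{M}^\upsilon_r(g)$ over $Q_k(x)$ is pointwise controlled at $x$ by a second application of $\mathcal{M}^\upsilon_r$.

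A Cauchy--Schwarz in $j$ against the $\ell^1(\ZZ)$ kernel $\{\theta^{|\ell|}\}_\ell$ followed by summation in $k$ then produces the claimed square-function bound. The step I expect to be the main obstacle is the kernel-level work in the second paragraph together with its cancellation-based counterpart in the regime $2^{-j}<\delta^k$: the Plancherel condition is postulated only for $L$, so its transport to $\mathcal{L}$ must be executed carefully using the two-sided bound \eqref{LUGh}, and the cancellation argument has to keep all $m$-dependence confined to the single constant $B$ while producing a right-hand side expressed through $U^{2/r}_{j,L}(f)$ rather than $\tilde f:=h^{-1}f$; a further reproducing-formula insertion on the residual $\tilde f-(\tilde f)_{Q_k(x)}$ is likely required to achieve this.
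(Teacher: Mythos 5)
Your overall strategy coincides with the paper's: a Calder\'on reproducing formula over scales $2^{-j}$, a martingale--difference estimate with geometric decay in the mismatch $|j-\widetilde k|$ split into the two regimes $2^{-j}\gtrsim\delta^k$ (H\"older regularity of a Doob--transformed heat--type kernel via Lemma~\ref{Dh}(iii)) and $2^{-j}\ll\delta^k$ (conservation property plus the small--boundary property of Lemma~\ref{le2.1}(6), exactly the $I_1,I_2,I_3$ splitting), with the multiplier confined to the constant $B$ and the square pieces $U^{2/r}_{j,L}(f)$ through Lemma~\ref{Lemma3}, and a final Cauchy--Schwarz/Schur summation in $j$ and $k$. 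So the skeleton is right.

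The one substantive difference is also where your write-up has a genuine gap, and you flag it yourself without closing it. The paper's reproducing formula carries an \emph{extra, multiplier-free} factor outside the $t$-integral,
\begin{equation*}
m(L)f=c_\Psi\sum_{j}\,2^{-2j}L e^{-2^{-2j}L}\int_{2^{-j-1}}^{2^{-j}}(t^2L)^k e^{-t^2L}\Psi(t\sqrt L)\,m(L)f\,\frac{dt}{t},
\end{equation*}
and the entire martingale--difference estimate (Lemma~\ref{Lemma4}) is proved for $\mathbb{D}^\upsilon_k\big(h^{-1}2^{-2j}Le^{-2^{-2j}L}g\big)$ with an \emph{arbitrary} input $g$, yielding $2^{-|j+(\log_2\delta)k|\gamma}\,\mathfrak{M}^\upsilon_q(h^{-1}g)(x)$. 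All the kernel-level work (pointwise Gaussian bounds, $\int_X\mathcal{K}_t(y,z)\,d\upsilon(z)=0$, the H\"older estimate \eqref{Tf2}, the boundary-strip bound) is therefore done for the pure heat-type kernel $\mathcal{K}_{2^{-2j}}$, for which Lemma~\ref{Dh} provides everything pointwise; only afterwards is $g$ replaced by the inner integral and majorized pointwise by $B\,U^{2/r}_{j,L}(f)$ via Lemma~\ref{Lemma3}. You instead fold $m$ into the kernel $\mathcal{K}^m_t$ and must then establish H\"older continuity and the cancellation/boundary-strip estimates for an object on which only weighted $L^2$ (Plancherel) information is available; in the regime $2^{-j}<\delta^k$ this forces you to replace the pointwise Gaussian tail bounds in $I_1,I_2$ by Cauchy--Schwarz against the weight $(1+d/t)^{2s_0}$ (which still gives polynomial decay $2^{-(j-\widetilde k)(s_0-n/2)/2}$, adequate for the Schur sum), and to justify $\int_X\mathcal{K}^m_t(x',y)\,d\upsilon(x')=0$ for a composed, non-integrable-looking symbol. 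None of this is carried out, and your closing sentence (``a further reproducing-formula insertion on the residual \ldots is likely required'') concedes that the passage to $U^{2/r}_{j,L}(f)$ in that regime is unresolved. The missing idea is precisely the external factor $2^{-2j}Le^{-2^{-2j}L}$: inserting it decouples the regularity/cancellation analysis from the multiplier entirely and makes every step you identified as ``the main obstacle'' disappear.
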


The proof of Proposition~\ref{Lemma2} will be based on  the following
result.

\begin{lemma}\label{Lemma4}
Suppose that  there exists an L-harmonic function $h$ such that the operator satisfies conditions \rm{(H-1)} and \eqref{LUGh}.
 For $1< q<\infty$
	$$
	\left\|\mathbb{D}^\upsilon_k(h^{-1}2^{-2j}L\exp(-2^{-2j}L)f)\right\|_{L^\infty({Q_k(x)},d\upsilon)}
	\leq C2^{-|j+(\log_2\delta )k|\gamma}\,\mathfrak{M}^\upsilon_q(h^{-1}f)(x).
	$$
\end{lemma}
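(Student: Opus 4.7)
The plan is to Doob-transform the estimate to $\mathcal{L}$, express the dyadic martingale difference through the kernel $\mathcal{K}_t$ of $t\mathcal{L}e^{-t\mathcal{L}}$, and split according to whether $\sqrt{t}=2^{-j}$ exceeds the cube side $r_k=\delta^k$ or not; the target factor $2^{-|j+(\log_2\delta)k|\gamma}$ equals $\min\{(\sqrt{t}/r_k)^\gamma,(r_k/\sqrt{t})^\gamma\}$, signalling two distinct cancellation mechanisms. Using $\mathcal{T}_t(x,y)=T_t(x,y)/(h(x)h(y))$ and $d\upsilon=h^2\,d\mu$, one checks $h^{-1}e^{-tL}f=e^{-t\mathcal{L}}(h^{-1}f)$, which extends by spectral calculus to $h^{-1}m(L)=m(\mathcal{L})h^{-1}$. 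Setting $t=2^{-2j}$, $g:=h^{-1}f$, and $F:=t\mathcal{L}e^{-t\mathcal{L}}g$, Fubini gives, for $y\in Q_k(x)$,
\[
\mathbb{D}^\upsilon_k F(y)=\int_X g(w)\,\bigl[\Phi_{Q_{k+1}(y)}(w)-\Phi_{Q_k(x)}(w)\bigr]\,d\upsilon(w),\quad \Phi_Q(w):=\frac{1}{\upsilon(Q)}\int_Q\mathcal{K}_t(u,w)\,d\upsilon(u).
\]
By Lemma~\ref{Dh}(ii) and self-adjointness of $\mathcal{L}$, $\int_X\mathcal{K}_t(u,w)\,d\upsilon(u)=0$, so $\Phi_Q(w)=-\upsilon(Q)^{-1}\int_{X\setminus Q}\mathcal{K}_t(u,w)\,d\upsilon(u)$, and the Gaussian bound then forces $|\Phi_Q(w)|$ to be exponentially small in $d(w,X\setminus Q)/\sqrt{t}$ for $w$ deep in $Q$, and in $d(w,Q)/\sqrt{t}$ for $w$ outside $Q$.

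Case A ($\sqrt{t}\geq r_k$): Any $u,u'\in Q_k(x)$ satisfy $d(u,u')\leq Cr_k\leq C\sqrt{t}$, so the Hölder regularity \eqref{Tf2}, applied in the first variable via symmetry of $\mathcal{K}_t$, yields
\[
|\Phi_{Q_{k+1}(y)}(w)-\Phi_{Q_k(x)}(w)|\leq \frac{C(r_k/\sqrt{t})^{\gamma}}{\upsilon(B(x,\sqrt{t}))}\exp\!\bigl(-d(x,w)^2/ct\bigr).
\]
A dyadic annular decomposition around $x$ together with Hölder's inequality of exponent $q$ and $\upsilon$-doubling produces $\leq C(r_k/\sqrt{t})^\gamma\mathcal{M}^\upsilon_q g(x)\leq C(r_k/\sqrt{t})^\gamma\mathfrak{M}^\upsilon_q g(x)$.

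Case B ($\sqrt{t}<r_k$): Hölder regularity across $Q_k(x)$ is useless, so I exploit cancellation together with the thin-boundary property of dyadic cubes. I split the $w$-integration into (i) the deep interior $\{w\in Q_{k+1}(y):d(w,(Q_{k+1}(y))^c)>A\sqrt{t}\}$, on which both $\Phi$'s are exponentially small; (ii) the inner and outer boundary layers of $\partial Q_{k+1}(y)$ and $\partial Q_k(x)$, of width $A\sqrt{t}$, whose $\upsilon$-measures are at most $C(\sqrt{t}/r_k)^\rho$ times the ambient cube measure by property~(6) of Lemma~\ref{le2.1} (applied also to sibling cubes $Q_{k+1}^\beta\subset Q_k(x)$ to cover the outer layer of $Q_{k+1}(y)$); and (iii) dyadic annuli $\{d(w,Q_k(x))\sim 2^i\sqrt{t}\}$ outside $Q_k(x)$, controlled by Gaussian decay in $i$. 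On each boundary layer, the trivial bound $|\Phi_Q(w)|\leq C\upsilon(Q)^{-1}$ together with Hölder in $w$ of exponent $q$ converts the small-measure factor into $(\sqrt{t}/r_k)^{\rho/q'}$ times $\upsilon(B(x,Cr_k))^{-1/q}\|g\|_{L^q(B(x,Cr_k))}\leq \mathcal{M}^\upsilon_q g(x)$, giving altogether $\leq C(\sqrt{t}/r_k)^{\rho/q'}\mathfrak{M}^\upsilon_q g(x)$. Relabelling $\gamma:=\min(\gamma,\rho/q')$ unifies both cases.

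The main obstacle is Case B: each average $\mathbb{E}^\upsilon_\ell F(y)$ is of order $\mathcal{M}^\upsilon_q g(x)$ without any decay, so a plain triangle inequality $|\mathbb{D}^\upsilon_k F|\leq|\mathbb{E}^\upsilon_{k+1}F|+|\mathbb{E}^\upsilon_k F|$ destroys the $(\sqrt{t}/r_k)^\gamma$ gain. One must genuinely coordinate the cancellation $\mathcal{L}(1)=0$ (to reduce $\Phi_Q$ to a complement integral), the Gaussian concentration of $\mathcal{K}_t$ (to rule out interior and far-exterior contributions), and the thin-boundary property~(6) (to bound the residual non-cancelling piece); the double maximal $\mathfrak{M}^\upsilon_q$ in the statement is a uniform majorant of $\mathcal{M}^\upsilon_q$ and will be used in the downstream proof of Proposition~\ref{Lemma2}.
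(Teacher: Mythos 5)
Your overall architecture is exactly the paper's: reduce to $\mathcal{L}$ by the Doob transform, split according to whether $\sqrt t=2^{-j}$ dominates the side length $\delta^k$, use the H\"older regularity \eqref{Tf2} when $\sqrt t\gtrsim\delta^k$, and use the conservation property $\int_X\mathcal K_t(u,w)\,d\upsilon(u)=0$ together with the thin-boundary property (6) of Lemma~\ref{le2.1} and H\"older with exponent $q$ when $\sqrt t\ll\delta^k$. Your Case A is sound. (Incidentally, your closing remark that the plain triangle inequality ``destroys the gain'' in Case B is not quite right: the paper bounds $\mathbb{E}^\upsilon_{k+1}$ and $\mathbb{E}^\upsilon_k$ separately in that regime, since the cancellation $\int\mathcal K_t\,d\upsilon=0$ already makes each expectation individually small.)

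There is, however, a genuine quantitative gap in your Case B: you take the boundary layers to have width $A\sqrt t$ with $A$ a fixed constant. On the deep interior $\{w:d(w,(Q_{k+1})^c)>A\sqrt t\}$ the bound you get from $\Phi_Q(w)=-\upsilon(Q)^{-1}\int_{X\setminus Q}\mathcal K_t(u,w)\,d\upsilon(u)$ is $|\Phi_Q(w)|\le C e^{-cA^2}\upsilon(Q)^{-1}$, and similarly the far-exterior annuli contribute $Ce^{-cA^2}\mathcal M^\upsilon g(x)$. For fixed $A$ these are fixed small constants; they do \emph{not} decay as $|j+(\log_2\delta)k|\to\infty$, so the claimed factor $2^{-|j+(\log_2\delta)k|\gamma}$ is not obtained. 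The remedy --- and this is precisely what the paper does --- is to take the layer width to be the geometric mean $\delta^{k+1}\bigl(2^{-j}/\delta^{k+1}\bigr)^{1/2}=(\delta^{k+1}\,2^{-j})^{1/2}$: then width$/\sqrt t=2^{(j-\widetilde k)/2}$ (up to constants, $\widetilde k=(-\log_2\delta)k$), so the interior and exterior terms decay like $\exp(-c\,2^{\,j-\widetilde k})$, while width$/\delta^{k+1}=2^{-(j-\widetilde k)/2}$, so the layer's measure is $\lesssim 2^{-(j-\widetilde k)\rho/2}\upsilon(Q_{k+1})$ and H\"older still yields $2^{-(j-\widetilde k)\rho/(2q')}\mathcal M^\upsilon_q g(x)$. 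With this single change (and the consequent passage to $\mathfrak M^\upsilon_q$ after averaging the $y$-dependent maximal functions over the cube, as in the paper's estimate of $I_2$), your argument coincides with the paper's proof.
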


\begin{proof}
	In the following we let $K_t$ denote the kernel of $tL\exp(-tL)$ and  $\mathcal{K}_t $  denote the kernel of  $t\mathcal{L}\exp(-t\mathcal{L})$.
	\begin{align*}
		K_t(x,y)=-t \frac{d}{dt}(T_t(x,y))=-t\frac{d}{dt}(\mathcal{T}_t(x,y)h(x)h(y))=-t\frac{d}{dt}(\mathcal{T}_t(x,y))h(x)h(y).
	\end{align*}
	Therefore,
	\begin{align*}
		tL\exp(-tL)(f)(x)&=\int_X-t\frac{d}{dt}    \big(\mathcal{T}_t(x,y)\big) h(x)h(y)f(y)d\mu(y) \\
		&=h(x)\int_X -t\frac{d}{dt}\big(\mathcal{T}_t(x,y)\big) (h^{-1}f)(y)d\upsilon(y)\\
		&=h(x)t\mathcal{L}\exp(-t\mathcal{L})(h^{-1}f)(x).
	\end{align*}
	\begin{align*}
		\mathbb{D}^\upsilon_k \left(h^{-1}2^{-2j}L\exp(-2^{-2j}L)f\right)(x) & =
		\mathbb{D}^\upsilon_k  \left(2^{-2j}\mathcal{L}\exp(-2^{-2j}\mathcal{L})(h^{-1}f)\right)(x).
	\end{align*}
	Let us fix $x$ and let $\widetilde{k}=(-\log_2\delta)k$. The problem is discussed in two cases:  $j-\widetilde{k}>10$ and  $j-\widetilde{k}\leq 10$.
	
	Case~1. $j-\widetilde{k}>10$.	For any $u\in Q_k(x)$,
	\begin{align*}
		&\mathbb{D}^\upsilon_k \left (2^{-2j}\mathcal{L}\exp(-2^{-2j}\mathcal{L})(h^{-1}f)\right)(u)\\
		&=\mathbb{E}^\upsilon_{k+1}  \left(2^{-2j}\mathcal{L}\exp(-2^{-2j}\mathcal{L})(h^{-1}f)\right)(u)-\mathbb{E}^\upsilon_k
		\left(2^{-2j}\mathcal{L}\exp(-2^{-2j}\mathcal{L} )(h^{-1}f)\right)(u).
	\end{align*}
	In the sequel of Case~1, we shall prove that there exists a $\gamma>0$ so that for any $u\in Q_k(x)$,
	\begin{align}\label{eq3.1xx}
		\left| \mathbb{E}^\upsilon_{k+1}  \left (2^{-2j}\mathcal{L}\exp(-2^{-2j}\mathcal{L})(h^{-1}f)\right)(u)\right|
		\leq C2^{-(j-\widetilde{k})\gamma}\mathcal{M}^\upsilon_q\circ\mathcal{M}^\upsilon_q(h^{-1}f)(x).
	\end{align}
 	The other term $\mathbb{E}^\upsilon_k  \left(2^{-2j}\mathcal{L}\exp(-2^{-2j}\mathcal{L})(h^{-1}f)\right)$, which is more simple, can be treated in the similar way to obtain
 	\begin{align}\label{eq3.2xx}
		\left| \mathbb{E}^\upsilon_{k} \left(2^{-2j}\mathcal{L}\exp(-2^{-2j}\mathcal{L})(h^{-1}f)\right)(u)\right|
		\leq C2^{-(j-\widetilde{k})\gamma}\mathcal{M}^\upsilon_q\circ\mathcal{M}^\upsilon_q(h^{-1}f)(x).
	\end{align}

	For this $u\in Q_k(x)\in \mathcal{D}_k$, there exists a child $Q_{k+1}\in\mathcal{D}_{k+1}$ so that $Q_{k+1}\subseteq Q_k(x)$ and $u\in Q_{k+1}$. From this $Q_{k+1}$, we next decompose the whole $X$ into three domains. To do it,
	let
	$B:=\{z\in X:dist(z,\partial Q_{k+1})\leq  \delta^{k+1}(2^{-j}/\delta^{k+1})^{1/2}   \}$,
	%$B:=\{z\in X:dist(z,\partial Q_{k+1})\leq 2^{-\widetilde{k}}2^{-\frac{j-\widetilde{k}}{2}}\}$,
	$ A_1:=B^c\bigcap Q_{k+1}$ and $A_2:=B^c\bigcap(Q_{k+1})^c$. Let $f_1=h^{-1}f\chi_{A_1}$,  $f_2=h^{-1}f\chi_{A_2}$ and $f_3=h^{-1}f\chi_{B}$.
	\begin{align*}
		&\mathbb{E}^\upsilon_{k+1}  \left (2^{-2j}\mathcal{L}\exp(-2^{-2j}\mathcal{L})(h^{-1}f)\right)(u)\\
		&=\frac{1}{\upsilon(Q_{k+1})} \int_{Q_{k+1}}\int_X \mathcal{K}_{2^{-2j}} (y,z)f_1(z)d\upsilon(z)d\upsilon(y)\\
		&+\frac{1}{\upsilon(Q_{k+1})} \int_{Q_{k+1}}\int_X \mathcal{K}_{2^{-2j}} (y,z)f_2(z)d\upsilon(z)d\upsilon(y)\\
		&+\frac{1}{\upsilon(Q_{k+1})} \int_{Q_{k+1}}\int_X \mathcal{K}_{2^{-2j}}(y,z)f_3(z)d\upsilon(z)d\upsilon(y)\\
		&:=I_1+I_2+I_3.
	\end{align*}
    The operator $L$ satisfies (H-1) and \eqref{LUGh}, then from Lemma~\ref{Dh},  $\int_X \mathcal{T}_{2^{-2j}}(x,y)d\upsilon(y)=1$.
	Then from \cite[proposition~2.2]{BDY2012},  we have
	$\int_{X}\mathcal{K}_{2^{-2j}}(y,z)d\upsilon(z)=0$. Therefore,  for any $y\in A_1$, there exist constants $C,c_2>0$ and $\alpha>0$ such that
	\begin{align*}
		&\bigg|\int_{Q_{k+1}}\mathcal{K}_{2^{-2j}}(y,z) d\upsilon(y)\bigg|=\bigg |\int_{Q_{k+1}^c}\mathcal{K}_{2^{-2j}}(y,z)d\upsilon(y)\bigg|\\
		&\leq C\int_{Q_{k+1}^c}
		\upsilon(B(z,2^{-j}))^{-1}\exp\Big(-\frac{d(y,z)^2}{c_22^{-2j}}\Big)d\upsilon(y)\\
		&\leq C\int_{d(y,z)\geq \delta^{k+1}(2^{-j}/\delta^{k+1})^{1/2}}\upsilon(B(z,2^{-j}))^{-1}\exp\Big(-\frac{d(y,z)^2}{c_22^{-2j}}\Big) \, d\upsilon(y)\\
		&\leq C\exp\left(-c_2^{-1}\delta2^{j-1}\delta^k\right)\leq C2^{-\alpha(j-\widetilde{k})}.
	\end{align*}
	
	Therefore, 	from $Q_{k+1}\subseteq Q_{k}(x)$ and doubling property, we have
	\begin{align*}
		I_1&=\bigg |\frac{1}{\upsilon(Q_{k+1})} \int_{Q_{k+1}}\int_{A_1} \mathcal{K}_{2^{-2j}} (y,z)f_1(z)d\upsilon(z)d\upsilon(y) \bigg|\\
		&\leq C2^{-\alpha(j-\widetilde{k})}\frac{1}{\upsilon(Q_{k+1})} \int_{A_1}
		\left |h^{-1}(z)f(z)\right|d\upsilon(z)\\
		&\leq C2^{-(j-\widetilde{k})\alpha}\mathcal{M}^\upsilon(h^{-1}f)(x).
	\end{align*}
	For the second term,
	\begin{align*}
		&\bigg|\int_{A_2}f(z)\mathcal{K}_{2^{-2j}}(y,z)d\upsilon(z)\bigg|\\
		&\leq C\int_{d(y,z)\geq \delta^{k+1}(2^{-j}/\delta^{k+1})^{1/2}}\left|h^{-1}(z)f(z)\right|\upsilon(B(z,2^{-j}))^{-1}\exp\Big(-\frac{d(y,z)^2}{c_22^{-2j}}\Big)\, d\upsilon(z)\\
		&\leq C\exp\Big(-\frac{\delta}{2c_2}2^{j}\delta^k\Big)\int_{X}  \left|h^{-1}(z)f(z)\right|\upsilon(B(z,2^{-j}))^{-1}\exp\Big(-\frac{d(y,z)^2}{2c_2\cdot2^{-2j}}\Big)\, d\upsilon(z)\\
		&\leq C2^{-(j-\widetilde{k})\alpha}\mathcal{M}^\upsilon(h^{-1}f)(y),
	\end{align*}
which combines $Q_{k+1}\subseteq Q_{k}(x)$ and doubling property can deduce
	\begin{align*}
		I_2&\leq C2^{-(j-\widetilde{k})\alpha}\frac{1}{\upsilon(Q_{k+1})} \int_{Q_{k+1}}\mathcal{M}^\upsilon(h^{-1}f)(y) d\upsilon(y)\\
		&\leq C2^{-(j-\widetilde{k})\alpha}\mathcal{M}^\upsilon\circ\mathcal{M}^\upsilon(h^{-1}f)(x).
	\end{align*}
From the definition of $B$, we know that $B\subseteq 2Q_{k+1}\subseteq 2Q_{k}(x)$.
	By the property of dyadic cubes (see Lemma~2.1), there exists  $\rho>0$ such that $\upsilon(B)\leq (2^{-j}/\delta^{k+1})^{\rho}\upsilon(Q_{k+1})$, which combines the H\"older inequality and doubling property imply that for $1<q<\infty$,
	\begin{align*}
		I_3&\leq \bigg|\frac{1}{\upsilon(Q_{k+1})} \int_B\int_{Q_{k+1}} \mathcal{K}_{2^{-2j}} (y,z)h^{-1}f(z)d\upsilon(y)d\upsilon(z)\bigg|\\
		&\leq \frac{C}{\upsilon(Q_{k+1})}\int_B|(h^{-1}f)(z)|d\upsilon(z)\\
		&\leq C\left(\frac{\upsilon(B)}{\upsilon(Q_{k+1})}\right)^{\frac1{q'}}\left(\frac{\upsilon(2Q_{k}(x))}{\upsilon(Q_{k+1})}\right)^{\frac1q}
		\bigg(\fint_{2Q_k(x)}    \left |(h^{-1}f)(z)\right|^qd\upsilon(z)  \bigg)^{1/q}\\
		&\leq C2^{-\frac{(j-\widetilde{k})\rho}{q'}}\mathcal{M}^\upsilon_q(h^{-1}f)(x).
	\end{align*}
	Hence, for any $u\in Q_k(x)$, there exists a $\gamma=\min\{\alpha,\rho/q'\}>0$ such that \eqref{eq3.1xx} holds, i.e.
	\begin{align*}
		&\left|\mathbb{E}^\upsilon_{k+1}(2^{-2j}\mathcal{L}\exp(-2^{-2j}\mathcal{L})(h^{-1}f))(u)\right|\leq C2^{-(j-\widetilde{k})\gamma}\mathcal{M}^\upsilon_q\circ\mathcal{M}^\upsilon_q(h^{-1}f)(x).
	\end{align*}
	Similarly, we can obtain \eqref{eq3.2xx}.	Hence, combining \eqref{eq3.1xx} and \eqref{eq3.2xx}, we obtain
	\begin{align*}
		&\left\|\mathbb{D}^\upsilon_k  \left (2^{-2j}\mathcal{L}\exp(-2^{-2j}\mathcal{L}\right)(h^{-1}f))\right\|_{L^\infty({Q_k(x)},d\upsilon)}\leq C2^{-(j-\widetilde{k})\gamma}\,\mathfrak{M}^\upsilon_q(h^{-1}f)(x).
	\end{align*}
	
	Case~2. $j-\widetilde{k}\leq 10$. For $u\in Q_k(x)$, there exists a $Q_{k+1}\in \mathcal{D}_{k+1}$ so that $Q_{k+1}\subseteq Q_k(x)$ and $u\in Q_{k+1}$.
	\begin{align*}
		&  \left|\mathbb{D}^\upsilon_k  \left (2^{-2j}\mathcal{L}\exp(-2^{-2j}\mathcal{L})(h^{-1}f)\right)(u)\right| \\
		&= \left |\mathbb{E}^\upsilon_{k+1}  \left(2^{-2j}\mathcal{L}\exp(-2^{-2j}\mathcal{L})(h^{-1}f)\right)(u)-\mathbb{E}^\upsilon_k \left(2^{-2j}\mathcal{L}\exp(-2^{-2j}\mathcal{L})(h^{-1}f)\right)(u)\right|\\
		& \leq \bigg |\int_X \left(\frac{1}{\upsilon(Q_{k+1})}\int_{Q_{k+1}}
		\Big( \mathcal{K}_{2^{-2j}}(y,z)-\mathcal{K}_{2^{-2j}}(u,z)\Big)\, d\upsilon(y)(h^{-1}f)(z)\right)d\upsilon(z)\bigg |\\
		&+\bigg |\int_X \left(\frac{1}{\upsilon(Q_{k}(x))}\int_{Q_{k}(x)}
		\Big(\mathcal{K}_{2^{-2j}}(y,z)-\mathcal{K}_{2^{-2j}}(u,z)\Big)\, d\upsilon(y)(h^{-1}f)(z)\right)d\upsilon(z)\bigg|\\
		&=:H_1+H_2.
	\end{align*}
	According to the property (5) of dyadic cubes, $Q_{k+1}\subseteq B(x_{k},C\delta^{k+1})$ for some $x_k\in X$.
	For any $y\in Q_{k+1}$,
	$$d(u,y)\leq d(x_k,u)+d(x_k,y)\leq c_n\delta^{k}\leq C2^{-\widetilde{k}}.$$
	So by the continuity of $\mathcal{K}_{2^{-2j}}$ (see estimate~\eqref{Tf2}), there exist constants $\alpha>0$  and $c_4>0$ such that
	\begin{align*}
		|\mathcal{K}_{2^{-2j}}(y,z)-\mathcal{K}_{2^{-2j}}(u,z)|&\leq C\upsilon(B(z,2^{-j}))^{-1}\exp\Big(-\frac{d(y,z)^2}{c_42^{-2j}}\Big) \, \Big(\frac{d(u,y)}{2^{-j}}\Big)^{\alpha}\\
		&\leq C2^{(j-\widetilde{k})\alpha}\upsilon(B(z,2^{-j}))^{-1}\exp\Big(-\frac{d(y,z)^2}{c_42^{-2j}}\Big).
	\end{align*}
Combining $Q_{k+1}\subseteq Q_{k}(x)$, we have
	\begin{align*}
		H_1&\leq  C2^{(j-\widetilde{k})\alpha}\int_X\fint_{Q_{k+1}}\upsilon(B(z,2^{-j}))^{-1}
		\exp\left(-\frac{d(y,z)^2}{c_42^{-2j}}\right)  \left|(h^{-1}f)(z)\right|
		d\upsilon(y)d\upsilon(z)\\
		%&\leq 2^{(j-\widetilde{k})\alpha}\fint_{Q_{k+1}}\mathcal{M}^2(h^{-1}f)(y)d\upsilon(y)\\
		&\leq C2^{(j-\widetilde{k})\alpha}\frac{\upsilon({Q_{k}(x)})}{\upsilon({Q_{k+1}})}\fint_{Q_{k}(x)}\mathcal{M}^\upsilon(h^{-1}f)(y)d\upsilon(y)\\
		&\leq C2^{(j-\widetilde{k})\alpha}\mathcal{M}^\upsilon\circ \mathcal{M}^\upsilon(h^{-1}f)(x).
	\end{align*}
	Similarly, we can obtain that
	$$H_2\leq C2^{(j-\widetilde{k})\alpha}\mathcal{M}^\upsilon\circ \mathcal{M}^\upsilon(h^{-1}f)(x).$$
	Combining Case~1 and Case~2, we obtain
$$
	\left\|\mathbb{D}^\upsilon_k  \left(h^{-1}2^{-2j}L\exp(-2^{-2j}L)f\right)\right\|_{L^\infty({Q_k(x)},d\upsilon)}
	\leq C2^{-|j+(\log_2\delta )k|\gamma}\,\mathfrak{M}^\upsilon_q(h^{-1}f)(x).
$$
This completes the proof of Lemma~\ref{Lemma4}.
\end{proof}

\vspace{0.5cm}

Next, let us prove Proposition~\ref{Lemma2}.

\begin{proof}[Proof of Proposition~\ref{Lemma2}]
	Let $s>n/r$ and $k>s/2+1$. Let $\Psi$ be defined as Subsection~\ref{subsec:2.1}.  Let $k\in \NN$. By the theory of spectral theory,
	$$m(L)f=c_{\Psi}\sum_{j\in\ZZ}2^{-2j}L\exp(-2^{-2j}L)\int_{2^{-j-1}}^{2^{-j}}(t^2L)^k\exp(-t^2L)\Psi(t\sqrt{L})m(L)f\frac{dt}{t},$$
	where the summation is in the sense of $L^2(d\mu)$. Then
	\begin{align*}
		&\mathbb{D}^\upsilon_k(h^{-1}m(L)(f))(x)\\
		&=\mathbb{D}^\upsilon_k\bigg(h^{-1}\sum_{j\in\ZZ}2^{-2j}L\exp(-2^{-2j}L)\int_{2^{-j-1}}^{2^{-j}}(t^2L)^k
		\exp(-t^2L)\Psi(t\sqrt{L})m(L)f\frac{dt}{t}\bigg)\, (x)\\
		&=\sum_{j\in\ZZ}\mathbb{D}^\upsilon_k\bigg(h^{-1}2^{-2j}L\exp(-2^{-2j}L)\int_{2^{-j-1}}^{2^{-j}}(t^2L)^k
		\exp(-t^2L)\Psi(t\sqrt{L})m(L)f\frac{dt}{t}\bigg)\,(x),
	\end{align*}
	where  we use $d\upsilon=h^2d\mu$ and the dominated convergence theorem in the procedure of interchange order of the summations.
	
	From Lemma~\ref{Lemma4} and Minkowski's inequality, for $1<r<2$, we have
	\begin{align*}
		&\left\|\mathbb{D}^\upsilon_k \left(h^{-1}m(L)(f)\right)\right\|_{L^\infty(Q_k(x),d\upsilon)}\\
		&\leq \sum_{j\in\ZZ}\bigg \|\mathbb{D}^\upsilon_k\bigg(h^{-1}2^{-2j}L\exp(-2^{-2j}L)\int_{2^{-j-1}}^{2^{-j}}(t^2L)^k
		\exp(-t^2L)\Psi(t\sqrt{L})m(L)f\frac{dt}{t}\bigg)\,\bigg\|_{L^\infty(Q_k(x),d\upsilon)}\\
		&\leq C\sum_{j\in\ZZ}2^{-|j+(\log_2\delta)k|\gamma}\,\mathfrak{M}^\upsilon_r\bigg(h^{-1}\int_{2^{-j-1}}^{2^{-j}}(t^2L)^k
		\exp(-t^2L)\Psi(t\sqrt{L})m(L)f\frac{dt}{t}\bigg)\,(x).
		%&\leq \left\|\mathcal{D}_k\left(h^{-1}2^{-2j}L\exp(-2^{-2j}L)  U_{j,L}^{2/r} (f)  \right)\right\|_{L^\infty(Q_k(x))}\\
		%&\leq 2^{-|j+(\log_2\delta)k|\gamma}\mathcal{M}^2_r\left(h^{-1}\int_{2^{-j-1}}^{2^{-j}}(t^2L)^k\exp(-t^2L)\Psi(t\sqrt{L})m(L)f(x)\frac{dt}{t}\right)\\
		%  &\leq2^{-|j+(\log_2\delta)k|\gamma}\mathcal{M}^2_r\left(h^{-1}\left(\int_{2^{-j-1}}^{2^{-j}}|(t^2L)^k
		%  \exp(-t^2L)\Psi(t\sqrt{L})m(L)f|^2\frac{dt}{t}\right)^{\frac12}\right)(x)\\
		%  &\leq (\sup_{t>0}\|\eta m(t\cdot)\|_{W^2_{s}}+m(0))2^{-|j+(\log_2\delta)k|\gamma}\mathcal{M}^2_r(U_{j,L}^{2/r} (f))(x).
	\end{align*}
	Following by Lemma~\ref{Lemma3} and the Cauchy-Schwarz inequality, we have
	\begin{align*}
		&\bigg(\int_{2^{-j-1}}^{2^{-j}}(t^2L)^k\exp(-t^2L)\Psi(t\sqrt{L})m(L)f(x)\frac{dt}{t}\bigg)^{\frac12}\\
		&\leq C\Big (\sup_{t>0}\|\phi m(t\cdot)\|_{W^q_s}+|m(0)|\Big)\, U_{j,L}^{2/r}(f)(x),
	\end{align*}
	which gives
	\begin{align*}
		&\left\|\mathbb{D}^\upsilon_k  \left (h^{-1}m(L)(f)\right)\right\|_{L^\infty(Q_k(x),d\upsilon)}\\
		%&\leq C\sum_{j\in\ZZ}2^{-|j+(\log_2\delta)k|\gamma}\mathcal{M}^2_r\left(h^{-1}\left(\int_{2^{-j-1}}^{2^{-j}}|(t^2L)^k
		%\exp(-t^2L)\Psi(t\sqrt{L})m(L)f|^2\frac{dt}{t}\right)^{\frac12}\right)(x)\\
		&\leq C\Big(\sup_{t>0}\|\phi m(t\cdot)\|_{W^q_{s}}+|m(0)|\Big)\,   \sum_{j\in\ZZ}2^{-|j+(\log_2\delta)k|\gamma}\, \mathfrak{M}^\upsilon_r \left (h^{-1}U_{j,L}^{2/r} (f)\right)(x).
	\end{align*}
	By using the Cauchy-Schwarz inequality and interchanging the order of summations, we have
	\begin{align*}
		&\Bigg(\sum_{k\in\ZZ}  \left \|\mathbb{D}^\upsilon_k \left(h^{-1}m(L)(f)\right)\right\|_{L^\infty(Q_k(x),d\upsilon)}^2\Bigg)^{\frac12} \\
		%& \leq
		%\left(\sum_{k\in\ZZ}\left|\sum_{j\in\ZZ}\mathcal{D}_k\left(h^{-1}(2^{-2j}L)\exp(-2^{-2j}L)
		%\int_{2^{-j-1}}^{2^{-j}}(t^2L)^k\exp(-t^2L)\Psi(t\sqrt{L})m(L)f(x)\frac{dt}{t}\right)\right|^2\right)^{\frac12}\\
		% &\leq (\sum_{k\in\ZZ}|\sum_{j\in\ZZ}\mathcal{D}_k(h^{-1}2^{-j}L\exp(-2^{-j}L)\circ m(L)\psi(2^{-j}L)\circ\psi(2^{-j}L)f)(x)|^2)^{1/2}\\
		&\leq C \Big(\sup_{t>0}  \left\|\phi m(t\cdot)\right\|_{W^q_{s}}+|m(0)|\Big)\,  \Bigg(\sum_{k\in\ZZ}\Big|\sum_{j\in\ZZ}2^{-|j+(\log_2\delta)k|\gamma} \, \mathfrak{M}^\upsilon_r  \left (h^{-1}U_{j,L}^{2/r} (f)\right)(x)\Big|^2\Bigg)^{\frac12}\\
		&\leq  C\Big(\sup_{t>0}  \left\|\phi m(t\cdot)\right\|_{W^q_{s}}+|m(0)|\Big)\,  \Bigg(\sum_{j\in\ZZ}\sum_{k\in\ZZ}2^{-2|j+(\log_2\delta)k|\gamma} \,\Big|\mathfrak{M}^\upsilon_r  \left(h^{-1}U_{j,L}^{2/r} (f)\right)(x)\Big|^2\Bigg)^{\frac12}\\
		&\leq C\Big(\sup_{t>0}  \left\|\phi m(t\cdot)\right\|_{W^q_{s}}+|m(0)|\Big)\,  \Bigg(\sum_{j\in\ZZ}\Big|\mathfrak{M}^\upsilon_r  \left (h^{-1}U_{j,L}^{2/r} (f)\right)(x)\Big|^2\Bigg)^{\frac12}.
	\end{align*}
	This completes the proof of Proposition~\ref{Lemma2}.
\end{proof}

\medskip

\section{Proof of  Theorem~\ref{Theorem1}}\label{sec:5}
\setcounter{equation}{0}

Let $h$ be the function defined in Lemma~\ref{Dh} and  $
d\upsilon(x)=h(x)^2d\mu(x),
$  as Subsection~\ref{subsec:2.2}. Let $d\omega=h^{p-2}d\upsilon$. The problem is discussed in two cases: $\upsilon(X)=\infty$ and $0<\upsilon(X)<\infty$.

\noindent{\bf{Case~1. $\upsilon(X)=\infty$}}.  Let $f\in \mathcal{S}$.
\begin{align*}
  \Big \|\sup_{1\leq i\leq N}|m_i(L)f|\ \Big\|_{L^p(d\mu)}^p
  &\leq \Big \|\sup_{1\leq i\leq N}  \left |h^{-1}m_i(L)f  \right|\ \Big\|_{L^p(h^{p-2}d\upsilon)}^p\\
  &=\Big  \|\sup_{1\leq i\leq N}  \left|h^{-1}m_i(L)f\right|\ \Big  \|_{L^p(d\omega)}^p\\
  &=2^pp\int_{0}^{\infty} \lambda^{p-1}
  \omega\left(\left\{x\in X:\sup_{1\leq i\leq N}  \left|h^{-1}m_i(L)f(x)\right|>2\lambda\right\}\right) d\lambda.
\end{align*}
We select    an $r_0$ such that $r_0>r$ and $r_0<\min\{2,p\}$, then according to Lemma~\ref{Lemma2}, there exists a constant $C_{r_0}$ such that
\begin{align*}
\mathbb{S}^\upsilon  \left(h^{-1}m_i(L)(f)\right)(x)&\leq C_{r_0}  \Big(\sup_{t>0}\|\phi m(t\cdot)\|_{W^q_{s}}+|m(0)|\Big)\,  \bigg(\sum_{j\in\ZZ}\Big|\mathfrak{M}^\upsilon_{r_0}  \left(h^{-1}U_{j,L}^{2/{r_0}} (f)\right) (x)\Big|^2\bigg)^{\frac12}\\
&\leq  C_{r_0}B\bigg(\sum_{j\in\ZZ}\Big|\mathfrak{M}^\upsilon_{r_0}  \left(h^{-1}U_{j,L}^{2/{r_0}} (f)\right) (x)\Big|^2\bigg)^{\frac12}.
\end{align*}
Let  $\varepsilon_N=\big(C\sqrt{\log(1+N)}\,\big)^{-1}$, then
\begin{align*}
  &\left\{x\in X:\sup_{1\leq i\leq N}  \left|h^{-1}m_i(L)(f)(x)\right|>2\lambda\right\}\subseteq I_\lambda^1\cup I_\lambda^2,
\end{align*}
where
$$
% \nonumber % Remove numbering (before each equation)
  I_\lambda^1  = \left\{x\in X:\sup_{1\leq i\leq N}|h^{-1}m_i(L)(f)(x)|>2\lambda,\   \   \Big(\sum_{j\in\ZZ}\big|\mathfrak{M}^\upsilon_{r_0}  \left(h^{-1}U_{j,L}^{2/{r_0}} (f)\right) (x)\big|^2\Big)^{\frac12}\leq \frac{\varepsilon_N\lambda}{C_{r_0}B} \right\},
$$
and
$$
  I_\lambda^2=
 \left\{x\in X:\Big(\sum_{j\in\ZZ}\big|\mathfrak{M}^\upsilon_{r_0}  \left(h^{-1}U_{j,L}^{2/{r_0}} (f)\right) (x)\big|^2\Big)^{\frac12}>  \frac{\varepsilon_N\lambda}{C_{r_0}B} \right\}.
$$

%\begin{align*}
%  &\{x\in X:\sup_{1\leq i\leq N}|h^{-1}m_i(L)(f)(x)|>2\lambda\}\\
%  &\subseteq \Big\{x\in X:\sup_{1\leq i\leq N}|h^{-1}m_i(L)(f)(x)|>2\lambda,\Big(\sum_{j\in\ZZ}\Big|\mathfrak{M}^\upsilon_{r_0}(h^{-1}U_{j,L}^{2/{r_0}} (f))(x)\Big|^2\Big)^{\frac12}\leq \varepsilon_N B^{-1}\lambda\Big\}\\
%  &\ \ \ \ \ \bigcup \Big\{x\in X:\Big(\sum_{j\in\ZZ}\Big|\mathfrak{M}^\upsilon_{r_0}(h^{-1}U_{j,L}^{2/{r_0}} (f))(x)\Big|^2\Big)^{\frac12}>\varepsilon_N B^{-1}\lambda\Big\}\\
% %\bigcup   \{x\in X:E_0(m_i(L)(f))(x)>2\lambda\}\\
%:=I_\lambda^1\bigcup I_\lambda^2.
%\end{align*}

For the first term $I_\lambda^1$,
$$
    I_\lambda^1\subseteq \bigcup_{i=1}^N\left\{x\in X:   \left|h^{-1}m_i(L)(f)(x)\right|>2\lambda, \    \  \mathbb{S}^\upsilon \left(h^{-1}m_i(L)(f)\right)(x)\leq \varepsilon_N \lambda\right\}.
$$
Therefore,
\begin{align*}
 &\omega\left(\left\{x\in X:\   \sup_{1\leq i\leq N}  \left|h^{-1}m_i(L)(f)(x)\right|>2\lambda,  \   \   \bigg(\sum_{j\in\ZZ}\left|\mathfrak{M}^\upsilon_{r_0}(h^{-1}U_{j,L}^{2/{r_0}} (f))(x)\right|^2\bigg)^{\frac12}\leq  \frac{\varepsilon_N\lambda}{C_{r_0}B}\right\}\right)\\
 &\leq\sum_{i=1}^{N}\omega\left(\left\{x\in X:\   \sup_{k\in\ZZ} \left|\mathbb{E}_k^\upsilon(h^{-1}m_i(L)(f))(x)\right|>2\lambda, \  \    \mathbb{S}^\upsilon\left(h^{-1}m_i(L)(f)\right)(x)\leq \varepsilon_N\lambda\right\}\right).
\end{align*}
From  $m_i(L)f\in L^{2}(d\mu)$, we have $h^{-1}m_i(L)f\in L^2(d\upsilon)$ . From Lemma~\ref{Lemma5}, we have
\begin{align*}
&\omega\left(\left\{x\in X:\,   \left|h^{-1}m_i(L)(f)\right|>2\lambda,\   \     \mathbb{S}^\upsilon  \left(h^{-1}m_i(L)(f)\right)(x)\leq \varepsilon_N\lambda\right\}\right)\\
 &\leq C\exp\left(-\frac{c_n}{\varepsilon_N^2}\right)\omega\left(\left\{x\in X:\   \sup_{k\in \ZZ}  \left|\mathbb{E}^\upsilon_k\left(h^{-1}m_i(L)f\right)(x)\right|>\lambda\right\}\right).
\end{align*}
Therefore, by the $L^p$-boundedness of spectral multipliers and the weighted $L^p$-boundedness of Hardy-Littlewood maximal operators, for $\omega\in A_{p/r}(d\upsilon)$, we have
\begin{align}\label{b1}
 &\left(p\int_{0}^{\infty}\lambda^{p-1}\omega(I_\lambda^1)d\lambda\right)^{\frac1p}\nonumber\\
 &\leq C\left(\exp\bigg(-\frac{c_n}{\varepsilon_N^2}\bigg)\right)^{\frac1p} \left(\sum_{i=1}^{N}p\int_{0}^{\infty}\lambda^{p-1}\omega\left(\left\{x\in X:\sup_{k\in \ZZ} \left|\mathbb{E}^\upsilon_k\left(h^{-1}m_i(L)f\right)(x)\right|>\lambda\right\}\right)  d\lambda\right)^{\frac1p}\nonumber\\
 &\leq  C\left(\exp\bigg(-\frac{c_n}{\varepsilon_N^2}\bigg)\right)^{\frac1p}
 \bigg(\sum_{i=1}^{N}\left\|\mathcal{M}^\upsilon  \left(h^{-1}m_i(L)(f)\right)\right\|_{L^p(d\omega)}^p\bigg)^{\frac1p}\nonumber\\
 &\leq C \left(\exp\bigg(-\frac{c_n}{\varepsilon_N^2}\bigg)\right)^{\frac1p}
 \bigg(\sum_{i=1}^{N}  \left\|h^{-1}m_i(L)(f)\right\|_{L^p(d\omega)}^p\bigg)^{\frac1p}\nonumber\\
 &=C\left(\exp\bigg(-\frac{c_n}{\varepsilon_N^2}\bigg)\right)^{\frac1p}\bigg(\sum_{i=1}^{N}\left\|m_i(L)(f)\right\|_{L^p(d\mu)}^p\bigg)^{\frac1p}\nonumber\\
 &\leq CB\left(N\exp\bigg(-\frac{c_n}{\varepsilon_N^2}\bigg)\right)^{\frac1p}\|f\|_{L^p(d\mu)}\leq CB\|f\|_{L^p(d\mu)}.
\end{align}
For the second term $I_\lambda^2$,  by a change of variable, we have
\begin{align}\label{b2}
  \left(p\int_{0}^{\infty}\omega(I_\lambda^2)\lambda^{p-1}d\lambda\right)^{\frac1p}&\leq CB\varepsilon_N^{-1}\bigg\|\Big(\sum_{j\in\ZZ}\Big|   \mathfrak{M}^\upsilon_{r_0}  \left(h^{-1}U_{j,L}^{2/{r_0}} (f)\right)\Big|^2\bigg)^{\frac12}\bigg\|_{L^p(d\omega)}.
  %&\leq B\varepsilon_N^{-1}\Big\|\Big(\sum_{j\in\ZZ}\Big|h^{-1}U_{j,L}^{2/r} (f))\Big|^2\Big)^{\frac12}\Big\|_{L^p(h^{p-2}d\upsilon)}\nonumber\\
  %&=B\varepsilon_N^{-1}\Big\|\Big(\sum_{j\in\ZZ}\Big|U_{j,L}^{2/r} (f))\Big|^2\Big)^{\frac12}\Big\|_{L^p(d\mu)}\nonumber\\
\end{align}
To continue, let us recall the Fefferman-Stein vector-valued maximal inequality  from \cite{FS1971,GLY2009} and its variant from \cite[p.9 (8)]{BBD2020}. For $0<p<\infty$, $0<r<\min\{2,p\}$ and $\omega\in A_{p/r}(d\upsilon)$, then for any sequence of measurable functions $\{f_j\}$
\begin{align}\label{SFI}
  &\bigg\|\Big(\sum_{j\in\ZZ} \left|\mathfrak{M}^\upsilon_r(f_j)\right|^2\Big)^{\frac12}\bigg\|_{L^p(d\omega )}\leq C\bigg\| \Big(\sum_{j\in\ZZ}|f_j|^2\Big)^{\frac12}\bigg\|_{L^p(d\omega )}.
\end{align}

  According to  \eqref{SFI} and the $L^p$-boundedness of Littlewood-Paley G-function  (see estimate~\eqref{GLI}) and also $\omega\in A_{p/r}(d\upsilon)\subseteq A_{p/r_0}(d\upsilon) $, we have
   \begin{align}\label{b5}
\nonumber&\bigg\|\Big(\sum_{k\in\ZZ}   \left|\mathfrak{M}^\upsilon_{r_0}  \left(h^{-1}U_{k,L}^{2/{r_0}}(f)\right)\right|^2\Big)^{\frac12}\bigg\|_{L^p(d\omega)}\\\nonumber
  &\leq C_{p,r}\bigg\|\Big(\sum_{j\in\ZZ} \left|h^{-1}U_{j,L}^{2/{r_0}}(L)f\right|^2\Big)^{\frac12}\bigg\|_{L^p(d\omega)}\\\nonumber
  &= C_{p,r}\bigg\|\Big(\sum_{j\in\ZZ}\left |U_{j,L}^{2/{r_0}}(L)f\right|^2\Big)^{\frac12}\bigg\|_{L^p(d\mu)}\\
  &= C_{p,r}\left\|G_L^{2/{r_0}}(f)\right\|_{L^p(d\mu)}\leq C_{p,r}\|f\|_{L^p(d\mu)},
  \end{align}
  which combining estimate~\eqref{b2} yields
  \begin{align}\label{b3}
 \nonumber\left(p\int_{0}^{\infty}\omega(I_\lambda^2)\lambda^{p-1}d\lambda\right)^{\frac1p}&\leq C_{p,r}B\varepsilon_N^{-1}\|f\|_{L^p(d\mu)}\\
 &\leq C_{p,r}B\sqrt{\log(N+1)}\|f\|_{L^p(d\mu)}.
  \end{align}

%
% which combines Lemma~\ref{Lemma1} and the fact $\omega\in A_{p/r}(\upsilon)\subseteq A_{p/r_0}(\upsilon) $, we have
%  \begin{align}\label{b3}
% \left(p\int_{0}^{\infty}\omega(I_\lambda^2)\lambda^{p-1}d\lambda\right)^{1/p}&\leq C_{p,r}B\varepsilon_N^{-1}\|f\|_{L^p(d\mu)}\\\nonumber
% &\leq C_{p,r}B\sqrt{\log(N+1)}\|f\|_{L^p(d\mu)}.
%  \end{align}

Finally, combining estimates~\eqref{b1}, \eqref{b3} and using density argument, we obtain Theorem~\ref{Theorem1} when $\upsilon(X)=\infty$.

\noindent {\bf{Case~2.}} $0<\upsilon(X)<\infty$. Let $(h^{-1}m_i(L)f)_X=\upsilon(X)^{-1}\int_Xh^{-1}m_i(L)f d\upsilon.$
\begin{align*}
  \bigg\|\sup_{1\leq i\leq N}  \left|m_i(L)f\right|\bigg \|_{L^p(d\mu)}
 % &\leq C\|\sup_{1\leq i\leq N}|m_i(L)f|\ \|_{L^p(h^{2-p}d\mu)}^p\\
  &\leq C\bigg\|\sup_{1\leq i\leq N}  \left |h^{-1}m_i(L)f\right|\bigg \|_{L^p(d\omega)}\\
  &\leq  C \bigg \|\sup_{1\leq i\leq N}   \left| \left(h^{-1}m_i(L)f\right)_X\right|\ \bigg\|_{L^p(d\omega)}\\
  &\ \ +C\bigg\|\sup_{1\leq i\leq N}  \left |h^{-1}m_i(L)f-\left(h^{-1}m_i(L)f\right)_X\right|\  \bigg \|_{L^p(d\omega)}.
\end{align*}
For the first term, from  H\"older's inequality,
\begin{align}\label{eexx5}
  \nonumber\left| \left(h^{-1}m_i(L)f\right)_X\right|
  &\leq \upsilon(X)^{-1}\left(\int_X  \left|m_i(L)f(y)\right|^ph^{-2}d\upsilon(y)\right)^{\frac1p}\left(\int_Xh(y)^{(\frac2p-1)p'}d\upsilon(y)\right)^{\frac1{p'}}\\
  &= \upsilon(X)^{-1}\left(\int_X  \left|m_i(L)f(y) \right|^pd\mu(y)\right)^{\frac1p} \left(\int_X h(y)^{-\frac{p-2}{p-1}}d\upsilon(y)\right)^{\frac{1}{p'}}.
  %&\leq CB\omega(X)^{-\frac1p}\|f\|_{L^p(d\mu)}.
\end{align}
If $h^{p-2}\in A_{p/r}(d\upsilon)$,  from $A_{p/r}(d\upsilon)\subseteq A_{p}(d\upsilon)$, then
%from Lemma~\ref{Lemma7}, we have $h^{-\frac{p-2}{p-1}}\in A_{p'}(d\upsilon)\cap RH_{\big(\frac{r'}{p'}\big)'}(d\upsilon)$. This implies that for any ball $B\subseteq X$,
\begin{align*}
\left(\int_B h(y)^{-\frac{p-2}{p-1}}d\upsilon(y)\right)^{\frac{1}{p'}}\left(\int_B h(y)^{p-2}d\upsilon(y)\right)^{\frac1p}\leq C\upsilon(B),\ \ \ \forall B\subseteq X.
\end{align*}
Note that $d\omega=h^{p-2}d\upsilon$ and $0<\upsilon(X)<\infty$. Using approximation and the fact $\omega$ is a weight, which is a locally integral function, then we can obtain $0<\omega(X)<\infty$ and
\begin{align*}
 \left(\int_X h^{-\frac{p-2}{p-1}}(y)d\upsilon(y)\right)^{\frac{1}{p'}}\leq C\omega(X)^{-\frac1p} \upsilon(X),
\end{align*}
which with \eqref{eexx5} and  Proposition~\ref{thmbydo}, implies that there exists a constant $C$\,(independent of $i$ and $N$) such that
\begin{align}\label{eexx2}
\Big\|\sup_{1\leq i\leq N}|(h^{-1}m_i(L)f)_X|\ \Big\|_{L^p(d\omega)}\leq CB\|f\|_{L^p(d\mu)}.
\end{align}

For the second term,  we can treat it as the case $\upsilon(X)=\infty$ to use Lemma~\ref{Lemma8} and Proposition~\ref{Lemma2} and then obtain
\begin{align}\label{eexx1}
\bigg \|\sup_{1\leq i\leq N}  \left|h^{-1}m_i(L)f- \left(h^{-1}m_i(L)f\right)_X\right|\ \bigg\|_{L^p(d\omega)}\leq C_{p,r}B\sqrt{\log{(N+1)}}  \left\|f\right\|_{L^p(d\mu)}.
\end{align}
Therefore, from \eqref{eexx2} and \eqref{eexx1}, we conclude that there exists constant $C_{p,r}$ such that
\begin{align*}
 \bigg \|\sup_{1\leq i\leq N} \big|  m_i(L)f\big| \,\bigg \|_{L^p(d\mu)}  &\leq C_{p,r}B\sqrt{\log{(N+1)}}\left\|f\right\|_{L^p(d\mu)}.
\end{align*}
Combining Case~1 and Case~2, we finish the proof of Theorem~\ref{Theorem1}.

    \hfill{}$\Box$

\begin{remark}
Fix $s\geq 0$ and $q\geq1$, then there exists a nonzero function $\eta\in C_c^\infty(\RR)$ with support $\mathrm{supp}\,\eta\subseteq[1/8,8]$ such that
$$
    \sup_{k\in\ZZ}  \left\|\phi m(2^k\cdot)\right\|_{W^q_{s}}\leq  \sup_{t>0} \left\|\phi m(t\cdot)\right \|_{W^q_{s}}\leq C_{s,\phi,\eta}\sup_{k\in\ZZ}  \left\|\eta m(2^k\cdot)\right\|_{W^q_{s}}.
$$

\end{remark}
    \medskip
%Similarly to the proof of  \eqref{b5}, we have the following corollary.
%\begin{corollary}\label{Lemma1}
%
%For any $1\leq r<2$ and $r<p<\infty$,  then there exists a constant $C_{p,r,\lambda}$ so that
%$$\left\|\left(\sum_{k\in\ZZ}|\mathfrak{M}^\upsilon_r(h^{-1}U_{k,L}^\lambda(f))|^2\right)^{1/2}\right\|_{L^p(d\mu)}\leq C_{p,r,\lambda}\|f\|_{L^p(d\mu)},$$
%provided $1<p<2$ and $\lambda>2/p$ or $2\leq  p<\infty$ and $\lambda>1$.
%
%
%In particular, we have
%\begin{align}\label{Le1_1}
% \left\|\left(\sum_{k\in\ZZ}|\mathfrak{M}^\upsilon_r(h^{-1}U_{k,L}^{2/r}(f))|^2\right)^{1/2}\right\|_{L^p(d\mu)}\leq C_{p,r}\|f\|_{L^p(d\mu)}.
%\end{align}
%\end{corollary}

\medskip

\section{Proof of Theorem~\ref{Theorem3} and Theorem~\ref{Theorem8}}\label{sec:6}
\setcounter{equation}{0}
%Let use recall a classical lemma from \cite{DOS2002}.
%\begin{lemma}\label{Lemma from DOS}
%Let $s>0$. Suppose that the operator $L$ satisfies Plancherel condition \rm{(A-3)}. For any $\epsilon>0$, there exists a constant $C$ s.t.
%$$\left(\int_{X} |K_{H(L)}(x,y)|^2(1+R^{1/2}d(x,y))^{2s}d\mu(x)\right)^{1/2}\leq \|\delta_RH\|_{W^2_{s+\epsilon}}\mu(y,R^{-1/2})^{-1/2}$$
%for all Borel bounded functions $H$  supported in $[R,4R]$.
%\end{lemma}

\subsection{Proof of Theorem~\ref{Theorem3}} \label{subsec:6.1}
Before giving the proof of Theorem~\ref{Theorem3}, we first recall a property of the Rademacher function in \cite{G2014} and the titling lemma in \cite{CGHS2005}.
We denote $\{r_j(s)\}$ the Rademacher function. Let $0<p<\infty$. $\{a_j\}$ and $\{c_j\}$ are real sequences. Then
$$
    \bigg(\sum_{j\in\ZZ} |a_j+ic_j|^2\bigg)^{\frac12}\approx \bigg(\int_{0}^{1}\Big(\sum_{j\in\ZZ}r_j(s)(a_j+ic_j)\Big)^pds\bigg)^\frac1p.
$$

\smallskip
\begin{lemma}\label{Lemma6}
	Let $N > 0$ and let $F$ be a set of integers with cardinality Car$(F)\leq  2^N$. Then
	there exists a set $B=\{b_i\}_{i\in\ZZ}$ integers, so that
	
\begin{itemize}
\item[\rm (i)] the sets $\{b_i\} + F$ are pairwise disjoint,
	
\item[\rm (ii)] $\ZZ=\bigcup_{|\ell|\leq 4^{N+1}}B+\ell$,
	
\item[\rm (iii)] $b_i\in  [i4^{N+1}, (i + 1)4^{N+1})$.
\end{itemize}
\end{lemma}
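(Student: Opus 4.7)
The plan is a direct inductive construction of $B=\{b_i\}_{i\in\ZZ}$. Setting $I_i := [i\cdot 4^{N+1},(i+1)\cdot 4^{N+1})$, condition (iii) reduces to requiring $b_i\in I_i$ for every $i$. I will choose the $b_i$ one index at a time in any fixed order (for concreteness $0,\pm 1,\pm 2,\ldots$), requiring at each step that $b_i+F$ be disjoint from every translate $b_j+F$ already placed.

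The key reduction is the elementary identity $(b_i+F)\cap(b_j+F)\neq\emptyset\iff b_i-b_j\in F-F$, where $F-F:=\{f_1-f_2:f_1,f_2\in F\}$ has cardinality $|F-F|\leq |F|^2\leq 4^N$. Hence the forbidden candidates for $b_i$ form the set
\[
\mathcal{B}_i:=I_i\cap \bigcup_{j\neq i}\bigl(b_j+(F-F)\bigr),
\]
where the union is taken over indices $j$ already assigned.

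The central quantitative step is a short double count. For each fixed $\delta\in F-F$,
\[
\#\{j:\ b_j+\delta\in I_i\}=\#\{j:\ b_j\in I_i-\delta\}\leq 2,
\]
because $I_i-\delta$ is a single half-open interval of length $4^{N+1}$, and the $b_j$'s lie in the pairwise disjoint intervals $I_j$ of the same length, so $I_i-\delta$ can meet at most two of those intervals and each contributes at most one $b_j$. Summing over $\delta\in F-F$ gives
\[
|\mathcal{B}_i|\leq 2\,|F-F|\leq 2\cdot 4^N < 4\cdot 4^N = |I_i|,
\]
so at least one candidate $b_i\in I_i\setminus \mathcal{B}_i$ always remains. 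The induction therefore runs to completion, delivering (i) and (iii).

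Property (ii) is then immediate: every $n\in\ZZ$ lies in a unique $I_i$, and so does $b_i$, whence $|n-b_i|<4^{N+1}$, i.e.\ $n=b_i+\ell$ with $|\ell|\leq 4^{N+1}$. No further ingredient is required; the whole argument rests on the factor-of-$4$ gap between $|F-F|\leq 4^N$ and $|I_i|=4\cdot 4^N$, which is precisely why the spacing was chosen to be $4^{N+1}$ rather than, say, $2^{N+1}$. The only place one has to be careful is the double count, since a priori infinitely many previously placed $b_j$ could contribute to $\mathcal{B}_i$; the bound $\leq 2$ per $\delta\in F-F$ is what tames this infinitude.
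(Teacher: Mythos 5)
Your greedy construction is correct: the reduction of disjointness to $b_i-b_j\notin F-F$, the bound $|F-F|\le |F|^2\le 4^N$, and the observation that each difference $\delta\in F-F$ can be realized by at most two already-placed $b_j$ (since $I_i-\delta$ meets at most two of the disjoint length-$4^{N+1}$ blocks) give $|\mathcal{B}_i|\le 2\cdot 4^N<4^{N+1}=|I_i|$, so the induction never stalls, and (ii) follows because $n$ and $b_i$ lie in the same block $I_i$. Note that the paper itself gives no proof of this tiling lemma -- it is quoted from \cite{CGHS2005} -- and your argument is essentially the standard greedy counting proof from that source, so there is nothing further to reconcile.
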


%\smallskip

We begin to prove  Theorem~\ref{Theorem3}.

\noindent{\rm{\bf {Proof of (i)}}}. We firstly define a sequence of sets by
$$
    F_j=\left\{k\in\ZZ:\omega^{*}(2^{2^j})<\omega(k)\leq \omega^{*}(2^{2^{j-1}})\right\},\quad  j\in \NN^+
$$
and
$$
    F_0=\left\{k\in\ZZ:\omega^{*}(2^{2})<\omega(k)\leq \omega^{*}(0)\right\}.
$$
Let $\phi\in C_c^\infty([1/2,2])$. Based on the sets $\{F_j\}$, $m(L)f$ is decomposed into a sequence of operators as
\begin{align*}
  m(L)f-m(0)f= \sum_{k\in\ZZ}m(L)\phi(2^{-k}L)f=\sum_{j\geq0}\widetilde{m}_j(L)f,
\end{align*}
where $\widetilde{m}_j(t)=\sum_{k\in F_j}m(t)\phi(2^{-k}t).$

For $F_j$, the cardinality of $F_j$ satisfies Car$(F_j)\leq 2^{2^j}$, following by the titling lemma~\ref{Lemma6}, there exist a sequence of integer sets $B=\{b_i\}_{i\in\ZZ}$ so that
\begin{align*}
  M_{\widetilde{m}_j,L}^{\mathrm{dyad}}(f)=\sup_{k\in\ZZ} \left|\widetilde{m}_j(2^k L)(f)(x)\right| =\sup_{|\alpha|\leq 4^{N+1}}\sup_{i\in\ZZ}  \left|\widetilde{m}_j(2^{b_i+\alpha} L)(f)(x)\right|.
\end{align*}
Using the property of $L^p$ norm of Rademacher functions $\{r_i(s)\}_{i\in\ZZ}$, we have
\begin{align*}
  & \bigg\| \sup_{|\alpha|\leq 4^{N+1}}\sup_{i\in\ZZ}   \left|\widetilde{m}_j(2^{b_i+\alpha} L)(f)\right|\ \bigg\|_{L^p(d\mu)}\\
  &\leq \bigg\|\sup_{|\alpha|\leq 4^{N+1}}   \Big (\sum_{i\in\ZZ} \left|\widetilde{m}_j(2^{b_i+\alpha} L)(f)\right|^2  \Big)^{\frac12}\bigg \|_{L^p(d\mu)}\\
  &\leq  \bigg\|\sup_{|\alpha|\leq 4^{N+1}}\bigg(\int_{0}^{1}   \Big|\sum_{i\in\ZZ}r_i(s)\widetilde{m}_j(2^{b_i+\alpha} L)(f)\Big |^p ds\bigg)^{\frac1p}\bigg\|_{L^p(d\mu)}\\
  &\leq \bigg(\int_{0}^{1}\bigg\|\sup_{   |\alpha|\leq 4^{N+1}}  \Big|\sum_{i\in\ZZ}r_i(s)\widetilde{m}_j(2^{b_i+\alpha} L)(f)\Big|\   \bigg \|_{L^p(d\mu)}^p ds\bigg)^{\frac1p}.
\end{align*}
Following by  titling Lemma~\ref{Lemma6} and supp$\,\phi\subseteq \RR^+$, we have for any $\beta\in\NN$,
\begin{align*}
  &  \bigg \|\phi\sum_{i\in\ZZ} r_i(s)\widetilde{m}_j(2^{\beta+b_i+\alpha}\cdot)\bigg \|_{W^q_s}+ \bigg|\sum_{i\in\ZZ} r_i(s)\widetilde{m}_j(2^{b_i+\alpha}\times 0)\bigg|\\
  &=\bigg \|\phi\sum_{i\in\ZZ} \sum_{k\in F_j}r_i(s)  \left(\phi m(2^k\cdot)\right) (2^{\beta+b_i+\alpha-k}\cdot)\bigg\|_{W^q_s}
  \\
  &\leq C\sup_{k\in F_j}  \left\|\phi m(2^k\cdot)\right\|_{W^q_s}\leq C\omega^{*}(2^{2^{j-1}}).
\end{align*}
Following by Theorem~\ref{Theorem1}, for $s>n/r$ and $p>r$, we have
\begin{align*}
  &  \left\|M_{\widetilde{m}_j,L}^{\mathrm{dyad}}(f)\right\|_{L^p(d\mu)\rightarrow L^p(d\mu)}\\
  &\leq C2^{j/2}\,\Bigg(\sup_{\beta>0}  \bigg\|\phi\sum_{i\in\ZZ} r_i(s)\widetilde{m}_j(2^{\beta+b_i+\alpha}\cdot)\bigg\|_{W^q_s}+\bigg|\sum_{i\in\ZZ} r_i(s)\widetilde{m}_j(2^{b_i+\alpha}\times 0)\bigg| \, \Bigg)\\
  &\leq C2^{j/2}\omega^{*}(2^{2^{j-1}}).
\end{align*}
As $\omega^*$ is a non-increasing function, by a simple calculation, we can obtain that
\begin{align*}
 \left \|M_{m,L}^{\mathrm{dyad}}(f)\right\|_{L^p(d\mu)\rightarrow L^p(d\mu)}&\leq C\sum_{j\geq0}2^{j/2}\omega^{*}(2^{2^{j-1}})+C|m(0)|\\
 &\leq C\omega^{*}(0)+C\sum_{j\geq2}\omega^{*}(2^{2^{j-1}})\int_{2^{2^{j-2}}}^{2^{2^{j-1}}}   \frac1{t\sqrt{\ln{t}}}dt+C|m(0)|\\
   &\leq C\omega^{*}(0)+C\sum_{k\geq0}\int_{2^{2^{k}}}^{2^{2^{k+1}}}   \frac{\omega^{*}(t)}{t\sqrt{\ln{t}}}dt+C|m(0)|\\
  &\leq C\Big(\omega^{*}(0)+\sum_{\ell\geq2}\frac{\omega^{*}(\ell)}{\ell\sqrt{\log\ell}}+|m(0)|\Big)<\infty.
\end{align*}

\vspace{0.3cm}

\noindent{\rm{\bf {Proof of (ii)}}}. Set $\psi\in C_c^\infty(\RR)$ with support $\{\xi:1/4\leq |\xi|\leq 4\}$ and $\psi(\xi)=1$ on the support of $\phi$. Pick up a  radial cutoff function $\chi\in C_c^\infty(\RR)$ with support $\{\xi:1/8\leq |\xi|\leq 8\}$; moreover, $\sum_{\ell\in\ZZ}\chi(2^{-\ell}x)=1$ for $x\neq0$. Define $\chi_0(x)= 1-\sum_{\ell\geq0}\chi(2^{-\ell}x) $ and $\chi_\ell(x)=\chi(2^{-\ell}x)$ for $\ell>0$.
Define a sequence $h_k^{j,\ell}$ by
\begin{align*}
  h_k^{j,0}(x)=
  \begin{cases}
    \mathcal{F}\big(\phi m(2^k\cdot)\big)(x), & \mbox{if \ } |x|\leq 4 \mathrm{\ and \ } k\in F_j \\[4pt]
    0, &  \mbox{if\  } |x|>4 \mathrm{\ or\ } k\notin F_j.
  \end{cases}
\end{align*}
and\begin{align*}
  h_k^{j,\ell}(x)=
  \begin{cases}
    \mathcal{F}\big(\phi m(2^k\cdot)\big)(x), & \mbox{if\  }  |x|\in [2^{\ell-4},2^{\ell+4}] \mathrm{\ and \ } k\in F_j \\[4pt]
    0, &  \mbox{if\  }  |x|\notin [2^{\ell-4},2^{\ell+4}] \mathrm{\ or\ } k\notin F_j.
  \end{cases}
\end{align*}
Then
\begin{align*}
 \widetilde{m}_j(t)&=\sum_{k\in E_j}\sum_{\ell\geq0}\psi(2^{-k}t)   \left(\phi m(2^k\cdot))*\check{\chi}_\ell \right)(2^{-k}t)\\
 &=\sum_{\ell\geq0}\sum_{k\in E_j}\Big(\psi \mathcal{F}^{-1}(h_k^{j,\ell}\chi_\ell)\Big)(2^{-k}t).
\end{align*}
Define
$$\widetilde{m}_{j,\ell}(t):=\sum_{k\in E_j}\Big(\psi \mathcal{F}^{-1}(h_k^{j,\ell}\chi_\ell)\Big)(2^{-k}t).$$

Following a standard argument, we have
\begin{align}\label{IBJ}
  \nonumber \sup_{t>0}   \left|\widetilde{m}_{j,\ell}(tL)f(x)\right|& =\sup_{d \in\ZZ}\sup_{1\leq t\leq2}  \left|\widetilde{m}_{j,\ell}(2^d tL)f(x)\right|\\
  &\leq C\sup_{d\in\ZZ}   \left|\widetilde{m}_{j,\ell}(2^d L)f(x)\right |+C\sup_{d\in\ZZ}\left(\int_{1}^{2}   \left |  \left [\widetilde{m}_{j,\ell}(2^d tL)\right] f(x)  \right|^pdt\right)^{\frac{1}{pp'}}\\
 \nonumber &\times
  \left(\int_{1}^{2}  \Big |t\frac{\partial}{\partial t}  \left[\widetilde{m}_{j,\ell}(2^d tL)\right]f(x)\Big|^pdt\right)^{\frac{1}{p^2}}.
\end{align}
Then we can reduce the matters to the dyadic case. Then for any $t\in[1,2]$, there exists a uniformly constant $C$ such that for any $|\alpha|\leq 4^N$, $\beta\in\ZZ$,
\begin{align}\label{m1_e}
   \nonumber&  \bigg \|\phi\sum_{i\in\ZZ} r_i(s)\widetilde{m}_{j,\ell}(2^{\beta+b_i+\alpha}\cdot)\bigg\|_{W^q_{s-1/p}}+\bigg|\sum_{i\in\ZZ} r_i(s)\widetilde{m}_{j,\ell}(2^{b_i+\alpha}\times 0)\bigg|\\
 \nonumber &=\bigg\|\phi\sum_{i\in\ZZ} \sum_{k\in F_j}r_i(s)\Big(\psi \mathcal{F}^{-1}(h_k^{j,\ell}\chi_\ell)\Big)(2^{\beta+b_i+\alpha-k}\cdot)\bigg\|_{W^q_{s-1/p}}  \\
  &\leq C\sup_{k\in F_j}   \left\|\psi \mathcal{F}^{-1}(h_k^{j,\ell}\chi_\ell)\right\|_{W^q_{s-1/p}}.
  %&\leq C2^{-\ell/p-\varepsilon\ell}\sup_{k\in F_j}\|\psi \mathcal{F}^{-1}(h_k^{j,\ell}\chi_\ell)\|_{W^q_{s}}.
  %&\leq C2^{-\ell/p-\varepsilon\ell}\sup_{k\in F_j}\|\phi m(2^k\cdot)\|_{W^q_s}\leq C2^{-\ell/p-\varepsilon\ell}\omega^{*}(2^{2^{j-1}})??.
\end{align}
For another term, indeed,
\begin{align*}
t\frac{\partial}{\partial t}   \left[\widetilde{m}_{j,\ell}(t\xi)\right]
%= \xi  \widetilde{m}^{(1)}_{j,\ell}(\xi)
&=\sum_{k\in F_j}\Big(2^{-k}t\xi \psi^{'}(2^{-k}t\xi) \mathcal{F}^{-1}(h_k^{j,\ell}\chi_\ell))(2^{-k}t\xi)\\
 &\ \ \ \ \  +2^{\ell}2^{-k}t\xi \psi(2^{-k}t\xi) \mathcal{F}^{-1}(h_k^{j,\ell}\widetilde{\chi}_\ell))(2^{-k}t\xi)\Big)\\
& :=\widetilde{\widetilde{m}}_{j,\ell}(t\xi),\end{align*}
where $\widetilde{\chi}(\lambda)=\lambda\chi(\lambda)$, and $\widetilde{\chi}_\ell(\lambda)=\widetilde{\chi}(2^{-\ell}\lambda)$ for $\ell\geq1$ and $\widetilde{\chi}_0(\lambda)=\lambda\chi_0(\lambda)$.
Then for any $t\in [1,2]$, $|\alpha|\leq 4^N$, $\beta\in\ZZ$,
\begin{align}\label{m2_e}
 \nonumber &\bigg\|\phi\sum_{i\in\ZZ} r_i(s) \widetilde{\widetilde{m}}_{j,\ell}(2^{\beta+b_i+\alpha}\cdot)\bigg\|_{W^q_{s-1/p}}+\bigg|\sum_{i\in\ZZ} r_i(s)\widetilde{\widetilde{m}}_{j,\ell}(2^{b_i+\alpha}\times 0)\bigg|\\
 \nonumber &\leq C\sup_{k\in F_j} \bigg\| \xi \psi'(\xi)\mathcal{F}^{-1}(h_k^{j,\ell}\chi_\ell)(\xi)+2^{\ell}\xi \psi(\xi) \mathcal{F}^{-1}(h_k^{j,\ell}\widetilde{\chi}_\ell)(\xi)\bigg\|_{W^q_{s-1/p}}\\
 &\leq C\sup_{k\in F_j}2^\ell\left(\left\|\widetilde{\psi} \mathcal{F}^{-1}(h_k^{j,\ell}\chi_\ell)\right\|_{W^q_{s-1/p}}
 +\left\|\widetilde{\psi} \mathcal{F}^{-1}(h_k^{j,\ell}\widetilde\chi_\ell)\right\|_{W^q_{s-1/p}}\right),
 % &C2^{-\ell/p'}\sup_{k\in F_j} \|\mathcal{F}^{-1}(h_k^{j,\ell}\chi_\ell)\|_{W^q_s}.
  %&\leq C2^{-\ell/p'}\sup_{k\in F_j}\|\phi m(2^k\cdot)\|_{W^q_s}\leq C2^{-\ell/p'}\omega^{*}(2^{2^{j-1}}).
\end{align}
where $\widetilde{\psi}(\xi)\in C_c^\infty([1/8,8])$ and $\widetilde{\psi}=1$ on the support of $\psi$.

%which implies
%from \eqref{hkjl_1}, Young's inequality and  \eqref{eq1.4.2}, we have
%\begin{align}\label{m3_e}
%\sup_{k\in F_j}\|\psi \mathcal{F}^{-1}(h_k^{j,\ell}\chi_\ell)\|_{W^q_{s-1/p}}&\leq C2^{-\ell/p-\varepsilon\ell}\sup_{k\in F_j}\|\psi \mathcal{F}^{-1}(h_k^{j,\ell}\chi_\ell)\|_{W^q_{s}}.
%%&\leq C  2^{-\ell/p-\varepsilon\ell}\sup_{k\in F_j}\|\phi m(2^k\cdot)\|_{W^q_s} \\
%%&\leq C 2^{-\ell/p-\varepsilon\ell}\omega^{*}(2^{2^{j-1}}).
%  %&\leq C2^{-\ell/p-\varepsilon\ell}\sup_{k\in F_j}\|\phi m(2^k\cdot)\|_{W^q_s}\leq C2^{-\ell/p-\varepsilon\ell}\omega^{*}(2^{2^{j-1}})??.
%\end{align}
%Similarly, we can deduce that if $k\in F_j$,
%\begin{align}\label{m4_e}
%  \| \xi \psi^{(1)}\mathcal{F}^{-1}(h_k^{j,\ell}\chi_\ell))+2^{\ell}\xi \psi \mathcal{F}^{-1}(h_k^{j,\ell}\widetilde{\chi}_\ell))\|_{W^q_{s-1/p}}  \leq C 2^{\ell/p'} \|\mathcal{F}^{-1}(h_k^{j,\ell}\chi_\ell)\|_{W^q_s}.
%\end{align}
%which implies
%% with Young's inequality and \eqref{eq1.4.2}, we have
%\begin{align}\label{m4_e}
%\sup_{k\in F_j}\| \xi \psi^{(1)}\mathcal{F}^{-1}(h_k^{j,\ell}\chi_\ell))+2^{\ell}\xi \psi \mathcal{F}^{-1}(h_k^{j,\ell}\widetilde{\chi}_\ell))\|_{W^q_{s-1/p}}
%  &\leq C2^{\ell/p'}\sup_{k\in F_j} \|\mathcal{F}^{-1}(h_k^{j,\ell}\chi_\ell)\|_{W^q_s}.
%  %&\leq C2^{\ell/p'} \omega^{*}(2^{2^{j-1}}) .
%  %&\leq C2^{-\ell/p'}\sup_{k\in F_j}\|\phi m(2^k\cdot)\|_{W^q_s}\leq C2^{-\ell/p'}\omega^{*}(2^{2^{j-1}}).
%\end{align}

Discussing similarly as the proof of \rm{(i)}, and using Theorem~\ref{Theorem1}, estimates~\eqref{IBJ}--\eqref{m2_e} and the  H\"older inequality, for $h^{p-2}\in A_{p/r}(h^2d\mu)$ and $s-1/p>n/r$, we can deduce that
\begin{align}\label{mkjl_e}
 \nonumber &\Big \|\sup_{t>0} \left|\widetilde{m}_{j,\ell}(tL)f(x)\right|\Big  \|_{L^p(d\mu)}\\ \nonumber
  &\leq C\Big \| \sup_{d\in\ZZ}  \left|\widetilde{m}_{j,\ell}(2^d L)f(x)\right|  \Big \|_{L^p(d\mu)}
  + C\left(\int_{1}^{2}\Big \|\sup_{d\in\ZZ}  \left| \big[\widetilde{m}_{j,\ell}(2^d tL)\big]f(x)\right |  \Big \|_{L^p(d\mu)}^pdt\right)^{\frac{1}{pp'}}\\\nonumber
  &\ \ \ \ \times \left(\int_{1}^{2}   \Big \|\sup_{d\in\ZZ}  \big |  \big [\widetilde{\widetilde{m}}_{j,\ell}(2^d tL)\big] f(x)\big |\, \Big\|_{L^p(d\mu)}^p dt\right)^{\frac{1}{p^2}}\\
  &\leq C2^{\frac{j}2} \sup_{k\in F_j}2^{\frac\ell{p}}  \left(\left\|\psi \mathcal{F}^{-1}(h_k^{j,\ell}\chi_\ell)\right\|_{W^q_{s-1/p-\varepsilon}}
  +\left\|\widetilde\psi \mathcal{F}^{-1}(h_k^{j,\ell}\widetilde\chi_\ell)\right\|_{W^q_{s-1/p-\varepsilon}}\right).
  %&\leq C2^{-\varepsilon \ell/p'}2^{j/2}\omega^{*}(2^{2^{j-1}}).
\end{align}
%$$\leq C2^{-\varepsilon \ell}2^{j/2}\sup_{k\in F_j} \|\mathcal{F}^{-1}(h_k^{j,\ell}\chi_\ell)\|_{W^q_s}$$

\noindent Let $\eta\in C_c^\infty([1/16,16])$ and $\eta=1$ on the support of $\chi$. Noting that if $k\in F_j$, for any $0<\varepsilon<s-1/p-n/r$, there exists a constant $C=C_\varepsilon$ such that
\begin{align}\label{hkjl_1}
  \nonumber &  \left \|\psi \mathcal{F}^{-1}(h_k^{j,\ell}\chi_\ell)  \right \|_{W^q_{s-\frac1p-\varepsilon}} \leq   \left\| \mathcal{F}^{-1}(h_k^{j,\ell}\chi_\ell)\right\|_{W^q_{s-\frac1p-\varepsilon}} \\
  \nonumber &= \left\|\mathcal{F}^{-1}  \left[(1+|\cdot|)^{-\frac1p-\varepsilon}\eta_\ell\right]* \mathcal{F}^{-1}  \left[(1+|\cdot|)^sh^{j,\ell}_k\chi_\ell\right] \ \right\|_{L^q} \\ \nonumber
  &\leq 2^{-\frac{\ell}{p}-\varepsilon\ell} \left\|\mathcal{F}^{-1}  \left[2^{\frac{\ell}{p}+\varepsilon\ell}(1+|\cdot|)^{-\frac1p-\varepsilon}\eta_\ell \right]\ \right\|_{L^1}
\left \|\mathcal{F}^{-1}  \left[(1+|\cdot|)^s h_k^{j,\ell}\chi_\ell\right]\ \right\|_{L^q}\\
  & \leq C2^{-\frac{\ell}{p}-\varepsilon\ell}  \left\|\mathcal{F}^{-1}(h_k^{j,\ell}\chi_\ell)\right\|_{W^q_s}.
\end{align}
From \eqref{hkjl_1}, Young's inequality and  \eqref{eq1.4.2}, we have
\begin{align}\label{hkjl1_1_1}
\nonumber\sup_{k\in F_j}2^{\frac\ell{p}} \left\|\psi \mathcal{F}^{-1}(h_k^{j,\ell}\chi_\ell)\right\|_{W^q_{s-1/p-\varepsilon}}&
   \leq 2^{-\varepsilon\ell}\sup_{k\in F_j} \left\|\mathcal{F}^{-1}(h_k^{j,\ell}\chi_\ell)\right\|_{W^q_s}\\
   &\leq 2^{-\varepsilon\ell}\sup_{k\in F_j} \left\|\phi m(2^k\cdot)\right\|_{W^q_s} \leq 2^{-\varepsilon\ell}\omega^{*}(2^{2^{j-1}}).
\end{align}
Similarly, we can estimate $\left\|\widetilde\psi \mathcal{F}^{-1}(h_k^{j,\ell}\widetilde\chi_\ell)\right\|_{W^q_{s-1/p-\varepsilon}}$.  Combining \eqref{mkjl_e} and \eqref{hkjl1_1_1}, we have
\begin{align*}
  &\Big\|\sup_{t>0} \big|\widetilde{m}_{j,\ell}(tL)f(x)\big|\ \Big\|_{L^p(d\mu)}
  \leq C2^{-\varepsilon \ell}2^{j/2}\omega^{*}(2^{2^{j-1}}).
\end{align*}
Summing $j$ and $\ell$, we obtain that $M_{m,L}$ is bounded on $L^p(d\mu)$ if $s>n/r+1/p$.

 This completes the proof of Theorem~\ref{Theorem3}.  \hfill $\Box$

 \medskip

There is an immediate consequence of Theorem~\ref{Theorem3}.

\begin{corollary}\label{Theorem6}
 Suppose that   $L$ is a self-adjoint operator that satisfies  Gaussian estimate  \eqref{ge} and Plancherel condition \eqref{e1.4} for some $q\in[2,\infty]$. Assume there exists an $L$-harmonic $h$ such that conditions  \rm{(H-1)} and \eqref{LUGh} hold.
Suppose   $2\leq p<\infty$ and $h^{p-2}\in A_{p/2}(h^2d\mu)$. For some $s>0$, the bounded Borel function $m$ satisfies
\begin{align*}
    \|\phi m(2^k\cdot)\|_{W^q_{s}}\leq \omega(k).
\end{align*}
Assume that the non-increasing rearrangement function $\omega^{*}$ satisfies
\begin{align*}
    \omega^{*}(0)+\sum_{\ell\geq2}\frac{\omega^{*}(\ell)}{\ell\sqrt{\log\ell}}<\infty.
\end{align*}
(i) Assume $s>n/2$, then $M_{m,L}^{\mathrm{dyad}}$ is bounded on $L^p(d\mu)$.

\noindent (ii) Assume $s>n/2+1/p$, then $M_{m,L}$ is bounded on $L^p(d\mu)$.

\end{corollary}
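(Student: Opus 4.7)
The plan is to deduce Corollary~\ref{Theorem6} as a direct specialization of Theorem~\ref{Theorem3} by choosing the auxiliary parameter $r\in[1,2)$ in Theorem~\ref{Theorem3} sufficiently close to $2$. The strategy rests on three trivial verifications: (a) the range hypothesis $r<p$; (b) the weight hypothesis $h^{p-2}\in A_{p/r}(h^2d\mu)$; and (c) the smoothness hypothesis $s>n/r$ (resp.\ $s>n/r+1/p$).

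For (a), since the hypothesis is $p\geq 2$, any $r\in(1,2)$ automatically satisfies $r<2\leq p$. For (b), note that $r<2$ forces $p/r>p/2$, so by monotonicity of the Muckenhoupt classes (Lemma~\ref{RHL}(i)),
\[
A_{p/2}(h^2d\mu)\subset A_{p/r}(h^2d\mu),
\]
and hence the assumption $h^{p-2}\in A_{p/2}(h^2d\mu)$ implies $h^{p-2}\in A_{p/r}(h^2d\mu)$ for every such $r$. For (c), the map $r\mapsto n/r$ (respectively $r\mapsto n/r+1/p$) is continuous and tends to $n/2$ (respectively $n/2+1/p$) as $r\to 2^-$. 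Therefore, given $s>n/2$ in part (i), one can choose $r\in(1,2)$ close enough to $2$ so that $s>n/r$; similarly, given $s>n/2+1/p$ in part (ii), one picks $r<2$ so close to $2$ that $s>n/r+1/p$.

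With this choice of $r$, all hypotheses of Theorem~\ref{Theorem3} are satisfied, and the conclusions of parts (i) and (ii) of Corollary~\ref{Theorem6} follow from the corresponding parts of Theorem~\ref{Theorem3}. There is no real obstacle here; the proof is essentially a three-line deduction, which is why the authors label the statement as an immediate consequence. The only small point worth noting is the degenerate case $p=2$, where $h^{p-2}\equiv 1\in A_1\subset A_{p/r}(h^2d\mu)$ holds trivially, so the weight hypothesis adds no information in that case.
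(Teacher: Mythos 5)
Your proposal is correct and follows essentially the same route as the paper: both deduce the corollary from Theorem~\ref{Theorem3} by taking $r\in[1,2)$ close to $2$ and using the inclusion $A_{p/2}(h^2d\mu)\subseteq A_{p/r}(h^2d\mu)$. In fact your limiting choice of $r$ in part (ii) is slightly cleaner than the paper's explicit choice $r=(\tfrac12+\tfrac1{pn})^{-1}$, since yours visibly secures the required threshold $s>n/r+1/p$ of Theorem~\ref{Theorem3}(ii).
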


\begin{proof}
(i)  For $s>n/2$. There exist an $r\in [1,2)$ such that $s>n/r$. If $h^{p-2}\in A_{p/2}(h^2d\mu)$, then $h^{p-2}\in A_{p/r}(h^2d\mu)$.
From Theorem~\ref{Theorem3}, we obtain that $M_{m,L}^{\mathrm{dyad}}$ is bounded on $L^p(d\mu)$.

(ii) Let $r=(\frac12+\frac{1}{pn})^{-1}$. We have $r\in [1,2)$, $s>n/r$ and $A_{p/2}(h^2d\mu)\subseteq A_{p/r}(h^2d\mu)$. If $h^{p-2}\in A_{p/2}(h^2d\mu)$, then $h^{p-2}\in A_{p/r}(h^2d\mu)$.
From Theorem~\ref{Theorem3}, we obtain that $M_{m,L}$ is bounded on $L^p(d\mu)$.
\end{proof}

\vspace{0.2cm}

\subsection{Proof of Theorem~\ref{Theorem8}}  \label{subsec:6.2}

The proof of Theorem~\ref{Theorem8} is based on the following proposition, whose  proof will be given later.

\begin{proposition}\label{H1toL1}
 Suppose that   $L$ is a self-adjoint operator that satisfies   Gaussian estimate  \eqref{ge} and Plancherel condition \eqref{e1.4} for some $q\in[2,\infty]$. Suppose $1<p<\infty$.
Let $m$ be a  bounded Borel function with $m(0)=0$.
$M_{m,L}$ is bounded on $L^2(X)$ with norm $A_0$. If $m$ satisfies
\begin{align}\label{pro6.3e00}
  \sup_{k\in\ZZ} \left\|\phi m(2^k\cdot)\right\|_{W^2_\alpha}\leq A_1,
\end{align}
where $\alpha>n/2+1/q'$.  Then $M_{m,L}$ is bounded from $L^1(d\mu)$ to $L^{1,\infty}(d\mu)$  with norm $C(A_0+A_1)$.
\end{proposition}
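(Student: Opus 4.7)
The proof proceeds by the classical Calderón-Zygmund scheme, combined with a square-function device that linearizes the supremum over $t>0$.

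Given $f\in L^1(d\mu)$ and $\lambda>0$, I would begin with the Calderón-Zygmund decomposition on the space of homogeneous type $(X,d,\mu)$, writing $f=g+\sum_j b_j$ with each $b_j$ supported in a ball $B_j=B(x_j,r_j)$, $\int b_j\,d\mu=0$, $\|b_j\|_{L^1}\le C\lambda\mu(B_j)$, $\sum_j\mu(B_j)\le C\lambda^{-1}\|f\|_1$, and $\|g\|_{L^2}^2\le C\lambda\|f\|_1$. The hypothesis that $M_{m,L}$ is bounded on $L^2(d\mu)$ with norm $A_0$ yields $\mu(\{M_{m,L}g>\lambda/2\})\le CA_0^2\lambda^{-1}\|f\|_1$ directly. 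Enlarging the exceptional set to $\Omega^{*}=\bigcup_j 2B_j$, whose measure is $\lesssim \lambda^{-1}\|f\|_1$, the problem is reduced to the off-diagonal estimate
\[
\sum_j\int_{(2B_j)^c}\sup_{t>0}\bigl|m(tL)b_j(x)\bigr|\,d\mu(x)\le CA_1\|f\|_1.
\]

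To linearize the supremum I would invoke the identity, available because $m(0)=0$,
\[
|m(tL)b_j(x)|^2\le 2\int_0^\infty\bigl|m(sL)b_j(x)\bigr|\cdot\bigl|sL\,m'(sL)b_j(x)\bigr|\,\frac{ds}{s},
\]
so that by Cauchy-Schwarz and the AM-GM inequality, $\sup_{t>0}|m(tL)b_j(x)|\le \mathcal G_1(b_j)(x)+\mathcal G_2(b_j)(x)$ with
\[
\mathcal G_1(f)(x)=\Bigl(\int_0^\infty|m(sL)f(x)|^2\,\tfrac{ds}{s}\Bigr)^{1/2},\qquad \mathcal G_2(f)(x)=\Bigl(\int_0^\infty|sL\,m'(sL)f(x)|^2\,\tfrac{ds}{s}\Bigr)^{1/2}.
\]
It would then suffice to establish, for $i=1,2$, the Hörmander-type $L^1$ off-diagonal bound
\[
\int_{(2B_j)^c}\mathcal G_i(b_j)(x)\,d\mu(x)\le CA_1\|b_j\|_1.
\]
I would prove this by splitting $(2B_j)^c$ into the dyadic annuli $A_k=\{x:2^kr_j<d(x,x_j)\le 2^{k+1}r_j\}$ and on each $A_k$ applying Cauchy-Schwarz in $x$ together with the annular Plancherel-type kernel bound
\[
\int_{A_k}\int_0^\infty\bigl|K_{m^{\sharp}(sL)}(x,y_1)-K_{m^{\sharp}(sL)}(x,y_2)\bigr|^2\,\frac{ds}{s}\,d\mu(x)\le \frac{C\,2^{-2k\sigma}}{\mu(B_j)}\,\sup_{t>0}\|\phi m^{\sharp}(t\cdot)\|_{W^q_\alpha}^2,
\]
valid for $y_1,y_2\in B_j$, where $m^{\sharp}$ stands either for $m$ itself or for the auxiliary symbol $s\lambda\,m'(s\lambda)$, and $\sigma=\alpha-n/2-1/q'>0$. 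Summing the geometric series in $k$ and then in $j$ delivers the required off-diagonal bound, and combining with the good-part estimate yields the weak-type $(1,1)$ inequality claimed in the proposition.

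The main obstacle will be handling the square function $\mathcal G_2$: the symbol $s\lambda\,m'(s\lambda)$ involves one genuine derivative of $m$, so I would have to verify that the hypothesis $\sup_k\|\phi m(2^k\cdot)\|_{W^q_\alpha}\le A_1$ still controls the Sobolev norm of dyadic dilates of $s\lambda\,m'(s\lambda)$ at the same regularity $\alpha$, and that the annular Plancherel estimate above holds for this auxiliary symbol. Both facts should follow from a direct Leibniz/chain-rule computation exploiting the compact support of $\phi$ together with hypothesis~\eqref{e1.4}. The precise smoothness threshold $\alpha>n/2+1/q'$ is exactly the sum of the $n/2$ loss inherent to the annular Plancherel estimate and the additional $1/q'$ loss produced by the $L^2(ds/s)$-Sobolev-type embedding step used to recover the pointwise supremum from the square functions $\mathcal G_1,\mathcal G_2$.
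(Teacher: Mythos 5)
Your overall skeleton (Calder\'on--Zygmund decomposition, $L^2$ bound for the good part, an off-diagonal $L^1$ estimate for the bad part) matches the paper's, but the two key technical steps you rely on are not available under the stated hypotheses, and the paper's proof is structured precisely to avoid them.

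First, your off-diagonal estimate hinges on the mean-zero cancellation of the $b_j$'s through a kernel-difference bound of the form $\int_{A_k}\int_0^\infty |K_{m^{\sharp}(sL)}(x,y_1)-K_{m^{\sharp}(sL)}(x,y_2)|^2\,\frac{ds}{s}\,d\mu(x)\lesssim 2^{-2k\sigma}\mu(B_j)^{-1}$. This is a H\"ormander-type regularity condition in the second variable, and it cannot be derived from the Gaussian upper bound \eqref{ge} plus the Plancherel condition \eqref{e1.4} alone: no spatial regularity of the heat kernel is assumed anywhere in the paper (indeed, the lack of such regularity is emphasized in the introduction as the reason for introducing the Doob transform elsewhere). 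The paper instead proves Lemma~\ref{SGL2}, a Duong--McIntosh-type weak $(1,1)$ criterion in which the cancellation of $b_j$ is replaced by the regularization $b_j\mapsto (I-e^{-r_j^2L})b_j$; the H\"ormander condition \eqref{horweak} is then imposed on the composed kernel $K_{T_s(L)(I-e^{-r^2L})}$, and the crucial small-frequency gain comes from the factor $\min\{1,e^k r^2\}$ produced by the symbol $1-e^{-r^2\xi}$ (see \eqref{pro6.3e2}), not from kernel differences. Without replacing your cancellation step by something of this kind, the $s$-integral in your annular estimate diverges for $s^{1/2}\ll r_j$ and the argument fails.

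Second, your linearization $\sup_t|m(tL)b|^2\le 2\int_0^\infty|m(sL)b|\,|sL\,m'(sL)b|\,\frac{ds}{s}$ transfers the problem to the symbol $\lambda m'(\lambda)$, and controlling $\sup_k\|\phi\,(\cdot)m'(2^k\cdot)\|_{W^q_\alpha}$ costs one full derivative of $m$ beyond the hypothesis, i.e.\ it requires $\sup_k\|\phi m(2^k\cdot)\|_{W^q_{\alpha+1}}<\infty$; this does not ``follow from a Leibniz/chain-rule computation,'' and with it your scheme would only reach the threshold $\alpha>n/2+1$ rather than $\alpha>n/2+1/q'$. The paper never differentiates $m$: the supremum in $t$ is kept inside the kernel condition of Lemma~\ref{SGL2}, and the kernel is estimated via the subordination formula $m_{r,t,\epsilon}(\xi)=C_\delta\int_0^\infty s^{\delta}m^{(\delta)}_{r,t,\epsilon}(s)\,\frac{\xi}{s}(1-\frac{\xi}{s})_+^{\delta-1}\frac{ds}{s}$ together with the Gasper--Trebels localization \eqref{pro6.3e1} and the weighted Plancherel bounds for the Bochner--Riesz pieces \eqref{pro6.3e3}; the exponent $1/q'$ arises there from the constraint $\delta-1>\beta-1/q$ in the Bochner--Riesz kernel estimate, a mechanism entirely different from the one you describe. (The paper also inserts a spectral truncation $\eta_{(\epsilon)}(L)$ and removes it by a limiting argument at the end, a point your proposal does not address but which is comparatively minor.)
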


\begin{proof}[Proof of Theorem~\ref{Theorem8}]
If $m$ satisfies \eqref{tm7c1} for some $s>n/2+1/q'$, we apply Theorem \ref{Theorem3} or Corollary~\ref{Theorem6} to get $M_{m,L}$ is bounded on $L^2(d\mu)$.  Then Proposition~\ref{H1toL1} implies that  $M_{m,L}$ is bounded from $L^1(d\mu)$ to $L^{1,\infty}(d\mu)$. By interpolation theorem,
$M_{m,L}$ is bounded on $L^p(d\mu)$ for all $1<p\leq 2$. This completes the proof of Theorem~\ref{Theorem8}.
\end{proof}

\medskip

To prove Proposition~\ref{H1toL1}, we need
the following result for a kind of Calder\'on-Zygmund singular integrals (see \cite{A2007, DM1999} for more reference).
Define an $L^2$-bounded operator: for $f\in L^2(d\mu)$ and $t>0$,
$$T_{t}(L)f(x)=\int_X K_{T_{t}(L)}(x,y)f(y)d\mu(y).$$
\begin{lemma}\label{SGL2}
 Suppose that   $L$ is a self-adjoint operator that satisfies   Gaussian estimate  \eqref{ge} and Plancherel condition \eqref{e1.4} for some $q\in[2,\infty]$. Suppose $1<p<\infty$.
 Let $T^*(f)=\sup_{t>0}|T_{t}(L)f|$. $T^*$ is bounded on $L^{2}(d\mu)$, i.e. there exists a constant $B_0$  such that
$$
\|T^*(f)  \, \|_{L^2(d\mu)}\leq B_0\|f\|_{L^2(d\mu)}.
$$
If $T^*$ satisfies
\begin{align}\label{horweak}
  \sup_{y\in X} \sup_{r>0}\int_{X-B(y,2r)}\sup_{t>0} \left|K_{T_{t}(L)(I-e^{-r^2L})}(x,y)\right|d\mu(x)<B_1.
\end{align}
Then $T^*$ is bounded from $L^1(d\mu)$ to $L^{1,\infty}(d\mu)$ with bound $C(B_0+B_1)$.
\end{lemma}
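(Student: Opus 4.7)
The plan is a Calder\'on-Zygmund decomposition adapted to the heat semigroup, following the strategy of Duong--McIntosh and Duong--Yan. Fix $\lambda>0$ and perform a CZ decomposition of $f$ at level $\lambda$: write $f = g + \sum_j b_j$, where $\|g\|_2^2 \leq C\lambda\|f\|_1$, each $b_j$ is supported in a ball $B_j = B(x_j, r_j)$ with $\int b_j\, d\mu = 0$, $\|b_j\|_1 \leq C\lambda\,\mu(B_j)$, $\sum_j \mu(B_j) \leq C\|f\|_1/\lambda$, and the dilates $\{4B_j\}$ have bounded overlap. Set $E^* = \bigcup_j 4 B_j$, so $\mu(E^*) \leq C\|f\|_1/\lambda$. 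From $T^* f \leq T^* g + T^* b$ and the $L^2$ hypothesis, Chebyshev gives $\mu\{T^*g > \lambda/2\} \leq CB_0^2 \|f\|_1/\lambda$, so it remains to control $T^* b$.

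For the bad part, split each $b_j$ at its intrinsic scale by writing $T_t(L) b_j = T_t(L)(I - e^{-r_j^2 L})b_j + T_t(L) e^{-r_j^2 L} b_j$. For $x \in (E^*)^c$ and $y \in B_j$, the triangle inequality gives $d(x, y) \geq 3 r_j$, so $x \in X \setminus B(y, 2r_j)$; hence Fubini and hypothesis \eqref{horweak} (applied with center $y$ and radius $r_j$) yield
\begin{equation*}
\int_{(E^*)^c} \sup_{t>0} \bigl|T_t(L)(I - e^{-r_j^2 L}) b_j(x)\bigr| \, d\mu(x) \leq B_1 \|b_j\|_1.
\end{equation*}
Summing in $j$ and applying Chebyshev controls this piece of $T^*b$ above $\lambda/4$ on $(E^*)^c$ by $CB_1\|f\|_1/\lambda$.

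The remaining piece is $T^* \tilde g$, where $\tilde g := \sum_j e^{-r_j^2 L} b_j$. Using the hypothesis $\|T^*\cdot\|_2 \leq B_0\|\cdot\|_2$ and Chebyshev reduces the problem to the estimate $\|\tilde g\|_2^2 \leq C\lambda \|f\|_1$. This is the main technical step. I plan to argue by $L^1$--$L^\infty$ interpolation: the bound $\|\tilde g\|_1 \leq C\|f\|_1$ is immediate from $\sum_j \|b_j\|_1 \leq C\|f\|_1$ together with the uniform $L^1$-boundedness of $e^{-r^2 L}$ (a consequence of \eqref{ge}), while the more delicate bound $\|\tilde g\|_\infty \leq C\lambda$ uses the Gaussian pointwise estimate $|K_{e^{-r_j^2 L}}(x,y)| \leq C \mu(B(x, r_j))^{-1} e^{-d(x,y)^2/c r_j^2}$ to obtain $|e^{-r_j^2 L} b_j(x)| \leq C\lambda\,[\mu(B_j)/\mu(B(x, r_j))]\, e^{-d(x, B_j)^2/c r_j^2}$; summing in $j$ by grouping according to the scale of $r_j$ and the distance $d(x, B_j)/r_j$, and invoking doubling together with the bounded overlap of $\{2B_j\}$, gives $|\tilde g(x)| \leq C\lambda$ pointwise. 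Consequently $\|\tilde g\|_2^2 \leq \|\tilde g\|_\infty \|\tilde g\|_1 \leq C\lambda \|f\|_1$.

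Assembling the three contributions yields $\mu\{T^*f > \lambda\} \leq C(B_0 + B_1) \|f\|_1/\lambda$, which is the claimed weak-$(1,1)$ bound. The Hörmander-type hypothesis \eqref{horweak} is calibrated precisely to handle the non-smoothed term $(I - e^{-r_j^2 L})b_j$; the smoothed residue $\tilde g$ is controlled via the Gaussian heat kernel and the geometry of the CZ decomposition, and I expect this last $L^\infty$ estimate on $\tilde g$ to be the most delicate step of the argument.
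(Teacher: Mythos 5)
Your overall architecture coincides with the paper's (which follows Duong--McIntosh \cite{DM1999}): Calder\'on--Zygmund decomposition, $L^2$ bound for the good part, the splitting $b_j = e^{-r_j^2L}b_j + (I-e^{-r_j^2L})b_j$ at the intrinsic scale, and the use of \eqref{horweak} on the complement of the dilated balls for the non-smoothed piece. Those three steps are fine. The gap is in your treatment of $\tilde g=\sum_j e^{-r_j^2L}b_j$: the claimed pointwise bound $\|\tilde g\|_\infty\leq C\lambda$ is false in general, so the $L^1$--$L^\infty$ interpolation does not go through. Your own bookkeeping shows why: after grouping by the scale $r_j\sim 2^k$ and the relative distance $d(x,B_j)/r_j\sim m$, doubling and bounded overlap give a contribution $O\big(\lambda (m+2)^n e^{-m^2/c}\big)$ from each group, which is summable in $m$ but contributes a fixed $O(\lambda)$ \emph{per scale} $k$; the sum over scales diverges. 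Concretely, on $\mathbb R$ take $x=0$ and CZ intervals $Q_k=[2^{-k},2^{-k+1})$, $k=1,\dots,K$: these are disjoint, each $Q_k$ has radius comparable to its distance from $0$, the mean-zero cancellation of $b_k$ gains nothing because the heat kernel at scale $r_k$ varies by an $O(1)$ factor over $Q_k$, and each term $|e^{-r_k^2L}b_k(0)|$ can be of size $\sim\lambda$, so $|\tilde g(0)|\sim K\lambda$.

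The correct (and standard) way to get $\|\tilde g\|_2^2\leq C\lambda\|f\|_1$ --- which is exactly the estimate the paper imports from \cite[p.~241, (10)]{DM1999} --- is by duality with the Hardy--Littlewood maximal function rather than by an $L^\infty$ bound. For $\|u\|_{L^2(d\mu)}=1$,
\begin{align*}
\Big|\int_X \sum_j e^{-r_j^2L}b_j\,\overline u\,d\mu\Big|
\leq \sum_j\int_{B_j}|b_j|\,\big|e^{-r_j^2L}u\big|\,d\mu
\leq C\sum_j\|b_j\|_{L^1(d\mu)}\inf_{z\in B_j}\mathcal M u(z),
\end{align*}
where the last inequality uses the Gaussian bound \eqref{ge} at time $t=r_j^2$ comparable to the radius of $B_j$. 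Since $\|b_j\|_{L^1(d\mu)}\leq C\lambda\,\mu(B_j)$, this is at most $C\lambda\int_{\bigcup_j B_j}\mathcal M u\,d\mu\leq C\lambda\,\mu(\bigcup_j B_j)^{1/2}\|\mathcal M u\|_{L^2(d\mu)}\leq C\lambda^{1/2}\|f\|_{L^1(d\mu)}^{1/2}$, which is the desired $L^2$ bound. With this substitution the rest of your argument assembles correctly into the weak $(1,1)$ estimate with constant $C(B_0+B_1)$.
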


\begin{proof}%[Proof of Lemma~\ref{SGL2}]
The proof of this result essentially can be found in
\cite[Theorem~1]{DM1999} or \cite[Theorem~2.1]{A2007}. For the completeness of the paper, we would present the proof in detail.

Given $f\in \mathcal{S}$ and $\alpha>\|f\|_{L^1(d\mu)}\mu(X)^{-1}$, following by the Calder\'on-Zygmund decomposition, we obtain a decomposition $f=g+b$, where $b=\sum_{i\in\ZZ}b_i$ and $b_i$ support in $Q_i$. The `good' function $g$ can be treated by using the $L^2$-boundedness of $T^*$. Indeed,
%\begin{align}\label{LeW1}
%   \| \sup_{t>0}|F(tL)f|\ \|_{L^2(d\mu)} \leq CB_0\|f\|_{L^2(d\mu)},
%\end{align}
\begin{align*}
\mu\left(\left\{ x\in X: T^*(g)(x)>\alpha   \right\}\right)\leq C\alpha^{-2}\|T^*(g)\|_{L^2(d\mu)}^2\leq CB_0\alpha^{-1}\|f\|_{L^1(d\mu)} .
\end{align*}
%Therefore, the treatment of the `good' function $g$ and the  part $\mu( \bigcup_j Q_j^*)$ are the standard procedures.
While for the `bad' function $b$,
\begin{align*}
&\mu\left(\left\{ x\in X: T^*(b)(x)>\alpha   \right\}\right)\\[2pt]
&\leq \mu\Big( \bigcup_j Q_j^*   \Big)+\mu\Big(\Big\{ x\in \Big(\bigcup_j Q_j^*\Big)^c : T^*(b)(x)>\alpha   \Big\}\Big).
\end{align*}
The the treatment of  the  part $\mu( \bigcup_j Q_j^*)$ is the standard procedure, it suffices to show
\begin{align}\label{le1es}
   \mu\left(\left\{ x\in \Big(\bigcup_j Q_j^*\Big)^c : T^*(b)(x)>\alpha   \right\}\right)\leq C\alpha^{-1}\|f\|_{L^1(d\mu)} ,
\end{align}
where $Q^*_j=2Q_j$.

\noindent For  $b_i$ with support $Q_i$ with radius $r_i$, let $t_i=r_i^2$. Then
$$
   T^*\Big(\sum_ib_i \Big)(x)\leq T^*\Big(\sum_i\exp(-t_iL)b_i\Big)+T^*\Big(\sum_i(I-\exp(-t_iL))b_i\Big).
$$
Similar to the discussion of \cite[p.241~(10)]{DM1999}, we have
\begin{align*}
   \Big \|\sum_i \exp(-t_iL)b_i \Big\|_{L^{2}(d\mu)}^2 \leq C\alpha \|f\|_{L^1(d\mu)},
\end{align*}
which combines with the $L^2$-boundedness of $T^*$  implies that
\begin{align*}
    &\mu\bigg(\Big\{ x\in \Big(\bigcup_j Q_j^*\Big)^c : T^*\Big(\sum_i \exp(-t_iL)b_i \Big)(x)>\alpha   \Big\}\bigg)\\
    &\leq C\alpha^{-2}\Big\|T^*\Big(\sum_i \exp(-t_iL)b_i\Big)\,\Big\|_{L^{2}(d\mu)}^{2}\\
    &\leq CB_0\alpha^{-2}\Big\|\sum_i \exp(-t_iL)b_i\Big\|_{L^{2}(d\mu)}^{2}\\
    &\leq CB_0\alpha^{-1}\|f\|_{L^1(d\mu)}.
\end{align*}
For the second term, with the condition~\eqref{horweak}, we have
\begin{align*}
    &\mu\left(\left\{ x\in \Big(\bigcup_j Q_j^*\Big)^c : T^*\Big(\sum_i \left(I-\exp(-t_iL)\right)b_i)\Big)(x)>\alpha   \right\}\right)\\
    &\leq C\alpha^{-1}\int_{(\bigcup_j Q_j^*)^c}\Big|T^* \Big(\sum_i \left(I-\exp(-t_iL)\right)b_i\Big)(x)\Big|d\mu(x)\\
    &=C\alpha^{-1}\int_{(\bigcup_j Q_j^*)^c}\sup_{s>0}   \bigg|\int_X  K_{T_{s}(L)}(x,y)\sum_i  \left (I-\exp(-t_iL)\right) b_i(y)d\mu(y)\bigg|d\mu(x)\\
    &\leq C\sum_{i}\alpha^{-1}\int_{(Q_i^*)^c}\int_{Q_i}\sup_{s>0}  \left |K_{T_{s}(L)(I-\exp(-t_iL))}(x,y)b_i(y)\right| d\mu(y)d\mu(x)\\
    &\leq C\sum_{i}\alpha^{-1}\int_{Q_i}|b_i(y)|\int_{X\setminus B(y,r_i)}\sup_{s>0}   \left|K_{T_{s}(L)(I-\exp(-t_iL))}(x,y)\right| d\mu(x)d\mu(y)\\
    &\leq CB_1\alpha^{-1}\|f\|_{L^1(d\mu)}.
\end{align*}
This gives \eqref{le1es}, which combines with the  estimate for `good' function $g$ and $\sum_j\mu(Q_j)\leq C\alpha^{-1}\|f\|_{L^1(d\mu)}$,  shows the weak type bound of $T^*$.

This finishes the proof of Lemma~\ref{SGL2}.
\end{proof}

\smallskip

\begin{proof}[Proof of Proposition~\ref{H1toL1}]
	Recall that  the fractional derivatives $F^{(\alpha)}$ of function $F$ of order $\alpha$   is defined by
\eqref{eyyy}.
 Let $G\in  C_c^\infty((0,\infty))$ with $|\partial^k G(\xi)|\leq 1$ for any $0\leq k\leq n+2$, and $G(\xi)=1$ if $\xi\leq 1$ and $G(\xi)=0$ if $\xi\geq2$. Write
$\eta_{(\epsilon)}(\xi)=G(\epsilon \xi)-G(2\epsilon^{-1}\xi)$. Write  $m_{r,t,\epsilon}(\xi)=m(t\xi)\eta_{(\epsilon)}(\xi)(I-e^{-r^2\xi})$.
    Note that from \cite[(3.1)]{GT1979}, we have the following formula: for $\delta>1/2$,
\begin{align*}
    m_{r,t,\epsilon}(\xi) &=C_\delta\int_{0}^{\infty} s^{\delta}  m_{r,t,\epsilon}^{(\delta)}(s)
    \frac{\xi}{s}  \Big(1-\frac{\xi}{s}\Big)_+^{\delta-1}\frac{ds}{s}.
\end{align*}
 By spectral theory, we have
\begin{align*}
    m_{r,t,\epsilon}(L) &=C_\delta\int_{0}^{\infty} s^{\delta}  m_{r,t,\epsilon}^{(\delta)}(s)
    \frac{L}{s} \Big(1-\frac{L}{s}\Big)_+^{\delta-1}\frac{ds}{s},
\end{align*}
and so
\begin{align*}
    \left|K_{m_{r,t,\epsilon}(L)}(x,y)\right| & \leq \sum_{j\in\ZZ}C_\delta\Big|\int_{e^j}^{e^{j+1}}s^{\delta}  m_{r,t,\epsilon}^{(\delta)}(s)
    K_{\frac{L}{s}(1-\frac{L}{s})_+^{\delta-1}}(x,y)\frac{ds}{s}\Big| .
\end{align*}
From Minkowski's inequality and the Cauchy-Schwarz inequality,
\begin{align}\label{pro6.3e0}
    &\int_{X\setminus B(y,2r)} \sup_{0<\epsilon<1} \sup_{t>0} \left|K_{m_{r,t,\epsilon}(L)}(x,y)\right|d\mu(x)\\
 \nonumber   &\leq C_\delta\sum_{j\in\ZZ} \sup_{0<\epsilon<1}\sup_{t>0}\left(\int_{e^j}^{e^{j+1}} \left|s^{\delta}  m_{r,t,\epsilon}^{(\delta)}(s)\right|^2 \frac{ds}{s} \right)^{\frac12}\\
 \nonumber  &\ \ \ \ \  \times \int_{X\setminus B(y,2r)}  \left(\int_{e^j}^{e^{j+1}} \big| K_{\frac{L}{s}(1-\frac{L}{s})_+^{\delta-1}}(x,y)\big|^2   \frac{ds}{s} \right)^{\frac12} d\mu(x)  .
\end{align}
Let $\Phi(t)\in C_c^\infty(\RR)$ be a monotone decreasing function for $t\geq 0$, with
\begin{align*}
&  \Phi(t) =\begin{cases}
             1, &   0\leq t\leq\frac13, \\[6pt]
             0, &  t\geq \frac23.
           \end{cases}
&   \Phi(t) =\begin{cases}
             1, &   -\frac13\leq t\leq0, \\
             1-\Phi(t+1), & -\frac23\leq t\leq -\frac13 , \\
             0, &  t\leq -\frac23.
           \end{cases}
\end{align*}
Then we have $\sum_{j\in\ZZ}\Phi(t-j)=1$ for all $t\in\RR$. Let $\varphi^2(t-j)=\Phi(t-j)$. Then $\varphi(t-j)\in C_c^\infty(\RR)$    is a deceasing function for $t\geq j$ and $\sum_{j\in\ZZ}\varphi^2(t-j)=1$.
From \cite[p\,263-p\,266]{GT1979}  for $\delta>1/2$,
\begin{align}\label{pro6.3e1}
&\left(\int_{e^j}^{e^{j+1}} \left|s^{\delta}  m_{r,t,\epsilon}^{(\delta)}(s)\right|^2 \frac{ds}{s} \right)^{\frac12}\leq C_\delta\sum_{k\geq j} e^{(j-k)\delta}\|\varphi^2(\cdot-k)m_{r,t,\epsilon}(e^{\cdot})\|_{W_\delta^2}.
\end{align}
Let $\phi\in C_c^\infty([1/4,1])$ and $\sum_{\ell\in\ZZ}\phi(2^{-\ell}t)=1$ for all $t>0$. Obviously,
\begin{align}\label{pro6.3e2}
\nonumber &\sup_{0<\epsilon<1} \sup_{t>0}\|\varphi^2(\cdot-k)m_{r,t,\epsilon}(e^{\cdot})\|_{W_\delta^2}\\
 \nonumber&\leq C\sup_{0<\epsilon<1}\sup_{t>0}\|\varphi m(te^{\cdot+k})\|_{W_\delta^2}\times\|\varphi(1-e^{-r^2e^{\cdot+k}})\eta_{(\epsilon)}(e^{\cdot+k})\|_{C^{[\delta]+1}}\\
% \nonumber &\leq C\min\{1,e^kr^2\}\sup_{t>0}\|\varphi m(te^{\cdot+k})\|_{W_\delta^2}\\
  &
  \leq C\min\{1,e^kr^2\}\sup_{\ell\in\ZZ}\|\phi m(2^\ell\cdot)\|_{W_\delta^2},
\end{align}
where the constant $C$ is independent of $\epsilon$.

Let $F_s^\delta(\xi)=\frac{\xi}{s}(1-\frac{\xi}{s})_+^{\delta-1}$. Observe that for any $  \xi>0$,
$$F_s^\delta(\xi^2)=\sum_{\ell\in\ZZ}\phi\Big(\frac{\xi}{2^\ell s^{\frac12}}\Big)F_s^\delta(\xi^2)=\sum_{\ell=-\infty}^{1}\phi\Big(\frac{\xi}{2^\ell s^{\frac12}}\Big)F_s^\delta(\xi^2).$$
Write  $F_{s,\ell}^\delta(\xi)=\phi(\frac{\xi}{2^\ell s^{1/2}})F_s^\delta(\xi^2)$. From  \cite[Lemmas~4.3 and  4.4]{DOS2002} and the Cauchy-Schwarz inequality,   we see that for $\beta>n/2$ and $2\leq q\leq \infty$,
\begin{align}\label{pro6.3e3}
 \nonumber    &\int_{X\setminus  B(y,2r)}  \left(\int_{e^{j}}^{e^{j+1}} \big| K_{F_{s,\ell}^\delta(\sqrt{L})}(x,y)\big|^2   \frac{ds}{s} \right)^{\frac12}d\mu(x)\\
   \nonumber  &\leq C\sum_{\ell=-\infty}^{1} \left( \int_{e^{j}}^{e^{j+1}} \int_{X} \big| K_{F_{s,\ell}^\delta(\sqrt{L})}(x,y)\big|^2 \big(1+2^{\ell}e^{\frac{j}2}d(x,y)\big)^{2\beta}  d\mu(x)\frac{ds}{s} \right)^{\frac12}\\
  \nonumber  &\ \ \ \   \times  \left(\int_{X\setminus B(y,2r)}  \big(1+2^{\ell}e^{\frac{j}2}d(x,y)\big)^{-2\beta}    d\mu(x)\right)^{\frac12}\\
  &\leq C\sum_{\ell=-\infty}^{1}\left(1+2^{\ell}e^{\frac{j}2}r\right)^{-\beta+\frac{n}{2}}\times
  \left(\int_{e^{j}}^{e^{j+1}} \left\|\phi F_{s,\ell}^\delta(2^\ell s^{\frac12}\cdot)\right\|_{W^q_{\beta+\varepsilon}}^2  \frac{ds}{s}\right)^{\frac12}.
\end{align}
From \cite[Lemma~2.2]{CDY2013}, we see  that for $\delta-1>\beta-1/q$,
\begin{align*}
\left\|\phi F_{s,\ell}^\delta(2^\ell s^{\frac12}\cdot)\right\|_{W^q_{\beta+\varepsilon}}&\leq C 2^{2\ell}\sup_{\ell\in\ZZ:\ell\leq1}\|\phi (1-2^{2\ell}\xi^2)^{\delta-1}_+\|_{W^q_{\beta+\varepsilon}}
\leq C2^{2\ell}
\end{align*}
and
\begin{align}\label{pro6.3e4}
	\sum_{\ell=-\infty}^1 (1+2^{\ell}e^{\frac{j}2}r)^{-\beta+\frac{n}{2}}2^{2\ell}\leq C(1+e^{\frac{j}2}r)^{-\min\{2,(\beta-\frac{n}2)\}}.
\end{align}
Now we apply  estimates~\eqref{pro6.3e0}-\eqref{pro6.3e4} and   the condition~\eqref{pro6.3e00}
to obtain
\begin{align*}
    &\int_{X\setminus B(y,2r)}\sup_{0<\epsilon<1} \sup_{t>0} \left|K_{m_{r,t,\epsilon}(L)}(x,y)\right|d\mu(x)\\
    &\leq C_\delta\sup_{\ell\in\ZZ}\|\phi m(2^\ell\cdot)\|_{W_\delta^2}\sum_{j\in\ZZ} \sum_{k\geq j}e^{(j-k)\delta} \min\{1,e^kr^2\}(1+e^jr^2)^{-\min\{1,\frac12(\beta-\frac{n}2)\}}\\
    &\leq C_\delta A_1.
\end{align*}
The operator $\eta_{(\epsilon)}(L)$ is bounded on $L^2(d\mu)$ uniformly in $\epsilon$ and $M_{m,L}$ is bounded on $L^2(d\mu)$.
Then there exists a constant $C$ which is independent of $\epsilon$ such that
$$ \Big\|\sup_{t>0} \left|m(tL)\eta_{(\epsilon)}(L)f\right|\Big \|_{L^2(d\mu)}\leq C\|f\|_{L^2(d\mu)}.$$
It follows from Lemma~\ref{SGL2}  that there exists a positive constant $C$ which is independent of $\epsilon$  such that
 \begin{align*}
     \Big\|\sup_{t>0} \left|m(tL)\eta_{(\epsilon)}(L)f\right|\Big \|_{L^{1,\infty}(d\mu)}\leq C(A_0+ A_1) \left\|f\right\|_{L^1(d\mu)}.
 \end{align*}
On other hand, from the $L^2$-boundedness of $M_{m,L}$ and the Littlewood-Paley theory, we have
 \begin{align*}
    \lim_{\epsilon\rightarrow 0+} \Big\|\sup_{t>0} \left|m(tL)(\eta_{(\epsilon)}(L)-I)f\right|\ \Big\|_{L^2(d\mu)}=0.
 \end{align*}
Then there exists a sub-sequence $\epsilon_k\rightarrow 0$ and a set of zero-measure $D$ such that
$$\lim_{k\rightarrow \infty}\sup_{t>0} \left|m(tL)(\eta_{(\epsilon_k)}(L)-I)f(x)\right|=0,\ \ \forall \,x\in X-D.$$
From Fatou's lemma, Chebyshev’s inequality and the above estimates, for any $f\in L^1$, we have
 \begin{align*}
     \Big\|\sup_{t>0} \left|m(tL)f\right|\Big \|_{L^{1,\infty}(d\mu)}&\leq C\liminf_{k\rightarrow \infty}\|\sup_{t>0} |m(tL)\eta_{(\epsilon_k)}(L)f|\,\|_{L^{1,\infty}(d\mu)}\\
    &+C\sup_{\lambda>0}\lambda \mu\Big(\Big\{x\in X:\lim_{k\rightarrow \infty}\sup_{t>0}\Big|m(tL)(\eta_{(\epsilon_k)}(L)-I)f(x)\Big|>\frac\lambda2\Big\}\Big)\\
    &\leq C(A_0+A_1)\|f\|_{L^1(d\mu)}+C\|\sup_{t>0}|m(tL)(\eta_{(\epsilon_k)}(L)-I)f|\,\|_{L^{1}(D,d\mu)}\\
     &\leq C(A_0+ A_1) \left\|f\right\|_{L^1(d\mu)}.
 \end{align*}
 This finishes the proof of Proposition~\ref{H1toL1}.
\end{proof}

\medskip

\section{Applications}\label{sec:7}
\setcounter{equation}{0}
In this section, we give several examples that satisfy the assumptions in this paper.

\subsection{Schr\"odinger operators with inverse-square potential}  \label{subsec:7.1}
Let $X=\RR^n,n\geq3$ with Lebesgue measure $dx$. Consider
$$
    L_V=-\Delta+V(x),
$$
where $V(x)=\gamma|x|^{-2}$ with $\gamma>0$. Consider the function
$$
    h_V(x)=|x|^\tau,
$$
where $\tau=\frac12(\sqrt{(n-2)^2+4\gamma}-(n-2))>0.$

Gaussian bound for  the heat kernel $T_t(x,y)$ of the semigroup $\exp(-tL_V)$ and the Plancherel condition for $q=2$ follow from \cite[III.3]{COSY2016}. From \cite[Theorem~1.2]{IKO2017}, $h_V$ satisfies condition~\eqref{LUGh}.

\begin{lemma}\label{Lemma11}
Let $1\leq r<2, r<p<\infty$, then $h_V^{p-2}\in A_{p/r}(d\upsilon)$, where $d\upsilon=h_V^2dx$. If $p\geq2$, then $h_V^{p-2}\in A_{p/2}(d\upsilon)$.
\end{lemma}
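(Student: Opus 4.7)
The plan is to verify the Muckenhoupt condition defining $A_{p/r}(d\upsilon)$ directly for the weight $w := h_V^{p-2} = |x|^{(p-2)\tau}$ against the measure $d\upsilon = |x|^{2\tau}\,dx$. Since every quantity arising is a power of $|x|$, all averages reduce to integrals of the form $\int_{B}|x|^{\beta}\,dx$. These satisfy the standard two-regime estimate
\[
\int_{B(x_0,r_0)}|x|^{\beta}\,dx
\;\approx\;
\begin{cases}
r_0^{\,n+\beta}, & |x_0|\le 2r_0,\\[3pt]
r_0^{\,n}\,|x_0|^{\beta}, & |x_0|> 2r_0,
\end{cases}
\]
valid for any $\beta>-n$, together with the corresponding volume bound $\upsilon(B(x_0,r_0))\approx r_0^{n}(|x_0|+r_0)^{2\tau}$.

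First I would handle balls away from the origin, where $|x_0|>2r_0$: on such $B$ one has $|x|\approx|x_0|$, so each of the two factors in the $A_{p/r}(d\upsilon)$ product becomes a pure power of $|x_0|$, and the exponents cancel to give a uniform bound. For balls near the origin ($|x_0|\le 2r_0$), the same computation produces pure powers of $r_0$, whose exponents again cancel — but only provided each exponent exceeds $-n$ so that the power-integral estimate above applies. The exponents that arise are $p\tau$ (clearly $>-n$ since $\tau>0$) and
\[
2\tau-\frac{r(p-2)\tau}{p-r},
\]
so the proof reduces to verifying $r(p-2)\tau/(p-r)<n+2\tau$.

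This is the only step with any content. If $p\le 2$ the left-hand side is nonpositive and the inequality is immediate. For $p>2$, the hypothesis $r<2$ gives $r(p-2)<2(p-r)$ and hence $r(p-2)/(p-r)<2$, so the left-hand side is strictly less than $2\tau<n+2\tau$. This yields the first assertion. For the second assertion, with $r=2$ and $p\ge 2$, the ratio $r(p-2)/(p-r)$ equals exactly $2$ when $p>2$, so we get $2\tau<n+2\tau$ since $n\geq 3$; at $p=2$ the weight is identically $1$ and $A_1(d\upsilon)$ holds trivially. I anticipate no obstacle of substance; the argument is bookkeeping on power weights, the only delicate point being the case split according to whether the ball contains (or is close to) the origin.
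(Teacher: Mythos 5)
Your proof is correct and follows essentially the same route as the paper's: both verify the $A_{p/r}(d\upsilon)$ condition directly using the power-weight estimate $\int_{B(x_0,r_0)}|x|^{\beta}\,dx\approx r_0^{\,n}(|x_0|+r_0)^{\beta}$ and observe that the exponents cancel. Your explicit check that the relevant exponents exceed $-n$ (indeed, they turn out to be nonnegative, since $r(p-2)/(p-r)\le 2$ for $r\le 2$) is a point the paper's computation uses implicitly but does not spell out.
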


\begin{proof}
Let $\mu_\alpha=|x|^\alpha dx$ for $\alpha>0$. Then
$$
    \mu_\alpha(B(x,t))\approx t^n(|x|+t)^\alpha,
$$
which implies
\begin{align*}
  &\left(\int_{B(x,t)}h_V^{p-2}h_V^2dx\right)\left(\int_{B(x,t)}h_V^{-\frac{p-2}{p/r-1}}h_V^2dx\right)^{p/r-1} \\
  &\approx t^{np/r}(|x|+t)^{p\tau }(|x|+t)^{p(2/r-1)\tau}\approx \left(\int_{B(x,t)}h_V^2dx\right)^{p/r},
\end{align*}
which, by the definition of Muckenhoupt weights, means that $h_V^{p-2}\in A_{p/r}(d\upsilon)$.

Assume $p>2$, then
\begin{align*}
  &\left(\int_{B(x,t)}h_V^{p-2}h_V^2dx\right)\left(\int_{B(x,t)}h_V^{-\frac{p-2}{p/2-1}}h_V^2dx\right)^{p/2-1} \\
  &\approx t^{np/2}(|x|+t)^{p\tau }\approx \left(\int_{B(x,t)}h_V^2dx\right)^{p/2},
\end{align*}
which means that $h_V^{p-2}\in A_{p/2}(d\upsilon)$ for $p>2$.  $p=2$ is obvious.

This ends the proof of Lemma~\ref{Lemma11}.
\end{proof}

If there exists a function $h$ such that condition~\eqref{LUGh}  is true, then from \cite[Propostion~2.3]{PSY2022}, there  exists a function $\varphi$ with $0<C^{-1}\leq\varphi\leq C<\infty$ such that conditions~\rm{(H-1)} and \eqref{LUGh}  hold for $\widetilde{h}=\varphi h$. Therefore, the assumptions of the paper are all satisfied.

\subsection{Scattering operators}  \label{subsec:7.2}

Let $X=\RR^3$ with the Lebesgue measure $d\mu$.  Let
$$L_S=-\Delta_3+V(x)=-(\partial_1^2+\partial_2^2+\partial_3^2)+V(x),$$
where $V(x)\geq 0$ is a compactly support function and satisfies
\begin{align}\label{So1}
  \frac{1}{4\pi} \sup_{x\in\RR^3}\int_{\RR^3}\frac{V(y)}{|x-y|}dy<1,
\end{align}
and also
\begin{align*}
  \int_{\RR^6} \frac{V(x)V(y)}{|x-y|^2}dxdy<\infty.
\end{align*}
From \cite[Theorem~7.15]{DOS2002}, the operator $L$ satisfies the Plancherel condition for $q=2$. From \cite{S1997}, condition \eqref{So1} implies that the heat kernel $T_t(x,y)$ of the semigroup $\exp(-tL_S)$ satisfies the following upper and lower Gaussian bounds
\begin{align}\label{LUG}
    \frac{C^{-1}}{\mu(B(x,\sqrt{t}))}\exp\left(-\frac{d(x,y)^2}{c_1t}\right)\leq T_t(x,y)\leq \frac{C}{\mu(B(x,\sqrt{t}))}\exp\left(-\frac{d(x,y)^2}{c_2t}\right).
\end{align}
From \cite[Proposition~3]{DP2017}, there exist a function $h$ and constant $C$ with
\begin{align}\label{eexx3}
    0<C^{-1}\leq h(x)\leq C<\infty
\end{align}
 such that for any $t>0$,
\begin{align}\label{eexx4}
    \exp(-tL_s)(h)(x)=h(x),\ \ \ \mathrm{a.e.} \   x\in \RR^3,
\end{align}
which combines \eqref{LUG} implies conditions~\rm{(H-1)}, \eqref{LUGh} hold. Meanwhile, it's apparently that $h^{p-2}\in A_{p/r}(h^2dx)$ for any $1\leq r<2$ and $p\in (r,\infty)$.

\iffalse
\subsection{Dirichlet Laplacian on $(0,\infty)$}  \label{subsec:7.3}

Let  $X=(0,\infty)$ with the Euclidean distance and the measure $d\mu(x)=x^\beta dx$. Consider $L_D=-f''(x)$ with Dirichlet boundary condition  at $x=0$ and consider $h(x)=x$. Then the heat kernel
$$
    T_t(x,y)=\frac{1}{\sqrt{4\pi t}}\left(\exp\left(-\frac{(x-y)^2}{4t}\right)-\exp\left(-\frac{(x+y)^2}{4t}\right)\right),\  \forall \ t,x,y>0 .
$$
Then Gaussian estimate and the Plancherel condition for $q=\infty$  hold.
The condition~\rm{(H-1)} can deduce by $\int_{0}^\infty e^{-x^2}dx=\sqrt{\pi}$.
The condition~\eqref{LUGh} follows from  \cite[Section~6.3]{PSY2022}.  For any $1\leq r<2$ and $p\in (r,\infty)$, $h^{p-2}\in A_{p/r}(x^2dx)$ follow from Lemma~\ref{Lemma11}.

%\subsection{Bessel operatorsn on $(0,\infty)$ with Dirichlet boundary condition}
\fi

\subsection{Bessel operators}  \label{subsec:7.4}

  Let the space be $X=(0,\infty)$, the Euclidean space be $d(x,y)=|x-y|$ and the measure be $d\mu=x^\alpha dx$ with $\alpha>-1$. The Bessel operator is defined by
$$
    L_B=-f^{''}(x)-\frac{\alpha}{x}f'(x),\ \ x>0.
$$
From \cite[eq.~(7.4)]{DP2017}, the  heat kernel $T_t(x,y)$ of the semigroup $\exp(-tL_B)$ satisfies  the upper and lower Gaussian bounds  \eqref{LUG}. The Plancherel condition for $q=\infty$ is also satisfied.

%\noindent Example~2:Bessel operators  with Dirichlet boundary condition at $x=0$. Consider $L_B$ with Dirichlet boundary at $x=0$ and $\alpha\in(-1,1)$. Let  $h_B=x^{1-\alpha}$. Condition~\rm{(H-2)} follows from \cite{PSY2022}.

\subsection{Laplace-Beltrami operators}  \label{subsec:7.5}

Let $(X,d,\mu)$ be a complete Riemannian manifold, $d$ be the related Riemannian distance and $\mu$  be the Riemannian measure. Assume that $(X,d,\mu)$ satisfies doubling property and the following Poincar$\mathrm{\acute{e}}$ inequality, for any $r>0$
$$
    \int_{B(x,r)}|f-f_B|^2 d\mu\leq Cr^2\int_{B(x,2r)}|\nabla f|^2d\mu,
$$
where $f_B$ denote the integral mean of $f$ on $B$ and $\nabla f$ denote the gradient of $f$.
It is well known that the heat kernel in this manifold satisfies the upper and lower Gaussian bounds  \eqref{LUG}, which can deduce that the related
Laplace-Beltrami operator satisfies the Plancherel condition for $q=\infty$. Similarly to the case of Scattering operators on $\RR^3$, the assumptions of Theorem~\ref{Theorem1} follow from \eqref{LUG}, \eqref{eexx3} and \eqref{eexx4}.

%According to Theorem~\ref{Theorem3}, the maximal functions formed by %dilations of H\"ormander-type multipliers for these self-adjoint %operators own some properties of $L^p$-boundedness.

\bigskip

\noindent
{\bf Acknowledgements}: The authors    were supported  by National Key R$\&$D Program of China 2022YFA1005700. P. Chen was supported by NNSF of China 12171489. X. Lin was supported by the Fundamental Research Funds for the Central Universities, Sun Yat-sen University.

%\vspace{0.3cm}

%Funding: The authors    were supported  by National Key R$\&$D Program of China 2022YFA1005700. P. Chen was supported by NNSF of China 12171489. X. Lin was supported by the Fundamental Research Funds for the Central Universities, Sun Yat-sen University.

%\vspace{0.3cm}
%
%\noindent
%{\bf Data availability}:
%No data was used for the research described in the article.

\end{document}